\documentclass[hidelinks,onefignum,onetabnum]{siamart220329}



\usepackage{subfigure,amsfonts,enumerate}
\usepackage{graphicx}
\usepackage{epstopdf}
\usepackage{algorithmic}
\usepackage{appendix}
\usepackage{verbatim}
\ifpdf
  \DeclareGraphicsExtensions{.eps,.pdf,.png,.jpg}
\else
  \DeclareGraphicsExtensions{.eps}
\fi

\usepackage{enumitem}
\setlist[enumerate]{leftmargin=.5in}
\setlist[itemize]{leftmargin=.5in}


\crefname{hypothesis}{Hypothesis}{Hypotheses}

\headers{Isola and mushroom dynamics, and bifurcations}{Yancong Xu, Yue Yang, Libin Rong and Pablo Aguirre}

\title{Isola and mushroom dynamics of limit cycles and bifurcations in a predator-prey system with additive Allee effect\thanks{Corresponding author: Pablo Aguirre, Email: pablo.aguirre@usm.cl
\funding{This work was funded by the National NSF of China (No. 11671114) and  National Science Foundation grantof USA(DMS-1950254). PA thanks Proyecto Basal CMM-Universidad de Chile.}}}

\author{Yancong Xu\thanks{Department of Mathematics, China Jiliang University, Hangzhou, 310018, China
  (\email{Yancongx@cjlu.edu.cn}).}
\and Yue Yang\thanks{School of Mathematical Sciences, Qufu Normal University, Qufu, 273165, China
  (\email{Kari\_Yang@163.com}).}
\and Libin Rong\thanks{ Department of Mathematics,
University of Florida, Gainesville, FL 32611, USA
  (\email{libinrong@ufl.edu}).}
 \and Pablo Aguirre\thanks{Departamento de Matem\'{a}tica, Universidad T\'{e}cnica Federico Santa Mar\'{i}a, Casilla 110-V, Valpara\'{i}so, Chile
  (\email{pablo.aguirre@usm.cl}).} }

\usepackage{amsopn}


\ifpdf
\hypersetup{
  pdftitle={Isola and mushroom dynamics in a predator-prey system},
  pdfauthor={Yancong Xu et al.}
}
\fi




\begin{document}

\maketitle

\begin{abstract}
This paper investigates a predator-prey system with an additive Allee effect and a generalized Holling IV functional response using a dynamical system approach. By means of a mixture of analytical and numerical procedures, we find the existence of codimension two and three Bogdanov-Takens bifurcation, codimension-three generalized Hopf bifurcation, and codimension-two cusp of limit cycles. We also found mushroom and isola bifurcations of limit cycles as the first examples of such phenomena in a predatory interaction. The model predicts that
extinction of both populations may only occur if the Allee effect is strong. However, long term coexistence is possible in both weak and strong Allee regimes indicating that predation has a balancing role in the interaction dynamics.
Nonetheless, a weak Allee effect can result in complex dynamics as well, including the presence of isolas, mushrooms and cusps of limit cycles.
\end{abstract}

\begin{keywords}
Additive Allee effect, generalized Holling IV functional response, codimension three bifurcations, isola and mushroom bifurcations of limit cycles.
\end{keywords}

\begin{MSCcodes}
37G15, 34C23,  37M20,  	92D25
\end{MSCcodes}

\section{Introduction}
This article is about the interaction of two populations, a prey and a predator species, modeled using nonlinear differential equations.
We are especially interested in the case where the prey population faces difficulties in growing from low densities and avoiding extinction; this phenomenon is traditionally known as the Allee effect. It was proposed by Allee~\cite{allee1931study} to investigate the interaction between species growth and density. The Allee effect refers to the phenomenon where the population size decreases and even faces extinction (with an extinction threshold) \cite{bascompte2003extinction} when it becomes too sparse. The Allee effect can be caused by various factors, including extinction, population establishment and biological invasion~\cite{am2009evidence}, genetic inbreeding depression \cite{stephens1999consequences}, difficulties in finding mating partners~\cite{contreras2020finding,courchamp2008allee,mccarthy1997allee}, predator-prey interactions~\cite{gascoigne2004allee}, cooperative defense, cooperative feeding~\cite{courchamp1999inverse}, environmental conditioning~\cite{stephens1999consequences}, social dysfunction at low densities~\cite{luque2013allee}, and others. The Allee effect can be classified as either a strong Allee effect~\cite{berec2007multiple, Wen2023}, where a threshold population level exists~\cite{bascompte2003extinction, clark1974mathematical}, or a weak Allee effect~\cite{stephens1999consequences} where this threshold is absent. In the case of a strong Allee effect, the species may experience additional mortality, ultimately leading to extinction~\cite{stephens1999consequences}. The Allee effect is also known as negative competition effect~\cite{wang1999competitive}; in fishing models, it is called depensation~\cite{clark1974mathematical}; in epidemiology, its analogous is the eradication threshold, the population level of susceptible individuals below which an infectious disease is eliminated from a population~\cite{bascompte2003extinction}.

Recently, there has been a growing understanding that the Allee effect plays a significant role in the possibility of local and global extinction, as well as various dynamic behaviors that profoundly impact species reproduction and conservation. For some marine species, per capita growth has been shown to decrease as population size is reduced below a critical level and two proposed causes for such a depensation or Allee effect are: Reduced reproductive success at low population densities~\cite{contreras2020finding} and increased relative predation on small populations~\cite{gascoigne2004allee}. This situation has occurred in many real fisheries as a consequence of overfishing when man acts as a predator~\cite{clark1974mathematical}.
Therefore, it is both engaging and important to explore the role of the Allee effect in predator-prey interactions.  

Let $x(t)$ and $y(t)$ denote the densities of prey and predator populations at time $t$, respectively.
We consider the following nonlinear differential equations modeling our populations:
\begin{equation}\label{A1}
\begin{array}{rcl}
\dfrac{dx}{dt}&=&\left(r \left(1-\dfrac{x}{k}\right)-\dfrac{m}{x+b}\right)x-\dfrac{q x y}{x^2+c x+a},\vspace{2mm} \\
\dfrac{dy}{dt}&=&sy\left(1-\dfrac{y}{nx}\right).
\end{array}
\end{equation}
From a biological feasibility perspective, we require that
$(x,y)\in \Gamma_{1}=\{(x,y)|x>0, y\geq0\}$
 and  $(r,s,k,q,a,n,m,b)\in \mathbb{R}_{+}^{8}$. 
Additionally, we set parameter $c\in \mathbb{R}$ to satisfy $c<2\sqrt{a}$ to guarantee that $x^2+cx+a>0$ for all $x>0$.

Model~\eqref{A1} follows a  Leslie-Gower formulation~\cite{hsu1995global,may1973stability,zhu2017bifurcations}. Here, the parameters $r$ and $s$ represent the intrinsic growth rates of the prey and predator, respectively. The parameter $k$ represents the carrying capacity of the prey in the absence of predation, while $nx$, proportional to the prey density -- i.e., a measure of the abundance of prey --, represents the carrying capacity of the predator.
For instance, Leslie's model has been used to model vole-weasel dynamics~\cite{hanski2001small}. This modeling scheme differs from Gause type models~\cite{HSU2019} in which the predator equation is based on a mass action principle -- as the actual numerical response is assumed to be depending on the functional response.

In~\eqref{A1}
the term $\frac{m}{x+b}$ models an Allee effect that involves the prey population. Since $\frac{m}{x+b}$ is effectively adding to the familiar logistic term in the prey equation, it is usually called an additive Allee effect~\cite{aguirre2009three, aguirre2009two, arsie2022predator,shang2022bifurcation, xu2019regime, zhang2020dynamic, lai2020stability, molla2022dynamics}.
This formula was proposed in~\cite{dennis1989allee} and~\cite{stephens1999consequences} and is one of the simplest forms that can capture both the strong and weak Allee effects~\cite{aguirre2009three,aguirre2009two}.
From a biological perspective, a strong Allee effect indicates that losses due to undercrowding at low population levels outweigh the growth. On the other hand, a weak Allee effect does not have this feature~\cite{berec2007multiple}.
In the absence of predation, the strength of the Allee effect in~\eqref{A1} can be determined by the condition $0 < br < m$. If this condition holds, it is considered a strong Allee effect. Conversely, if the condition $0 < m < br$ holds, it is classified as a weak Allee effect.

On the other hand, the functional response in~\eqref{A1} -- reflecting the capture ability of the predator with respect to the prey--- is modeled as a  {\it generalized} Holling type IV~\cite{arsie2022predator,collings1995bifurcation} or Monod-Haldane~\cite{xiao2001global}; it represents a low predation rate at low prey densities with a sharp acceleration as $x$ increases non-monotonically until it reaches its asymptotic value $q$.
This type of functional response seems a reasonable possibility if prey and webbing densities are assumed to be
directly related. Non-monotonic functional responses -- such as generalized Holling type IV -- are used to model the phenomenon of aggregation, a social behavior of prey, in which prey congregates on a fine scale relative to predator, so the predator's hunt is not spatially homogeneous, as is the case with miles-long schools of a certain class of fish~\cite{taylor1984predation}. In this case, one of the main advantages of schooling seems to be the confusion of the predator when it attacks. The most important benefits of aggregation are increased stealth. Additionally, aggregation can decrease vulnerability to being attacked and increase the time that group members can spend on activities other than surveillance~\cite{taylor1984predation}.
Other related examples of non-monotonous consumption occur at the microbial level where evidence indicates that in the face of an overabundance of nutrients, the effectiveness of the consumer may begin to decrease. This is often seen when microorganisms are used for waste breakdown or water purification, a phenomenon called inhibition~\cite{freedman1986predator,xiao2001global}.

The combination of prey population exhibiting Allee effect for low densities and a non-monotonic functional response has already been reported in a wide range of cases~\cite{gascoigne2004allee}.
For instance, there is the case of the Atlantic cod ({\it Gadus morhua}) that forms schools during the day, since commercial fishing (man as predator) causes the collapse of the stock because a greater proportion of this aggregate population is caught per unit of effort when the population decreases~\cite{clark2006worldwide,gascoigne2004marine}.
Furthermore, for obligately cooperative breeders such as the African wild dog ({\it Lycaon pictus}) and the meerkat ({\it Suricata suricatta}), a similar situation exists, as juvenile survival is lower in small groups than in large groups in areas with high predator densities, but less in large groups than in small groups in areas with low density of predators~\cite{courchamp2000impact,gascoigne2004allee}.

Other approaches to modeling the functional response typically fall into two main types: either prey-dependent~\cite{ 2006Qualitative,li2008traveling,seo2008comparison,shang2022bifurcation,wang2019canards,yang2023global,zegeling2020singular} or ratio-dependent~\cite{lajmiri2018bifurcation,negi2007dynamics,xiao2001global}.
 For instance, Aguirre et al.~\cite{aguirre2009two} studied the bifurcation diagram of limit cycles and the existence of two limit cycles in an early version of~\eqref{A1} with simplified Holling IV functional response; later they demonstrated that the same model can exhibit the coexistence of three limit cycles~\cite{aguirre2009three}.
Arsie et al.~\cite{arsie2022predator} analyzed high codimension bifurcations in a predator-prey system with generalized Holling type IV functional response and Allee effects in the prey population. They confirmed the occurrence of degenerate Hopf bifurcation of codimension 3 and heteroclinic bifurcation of codimension 2. Additionally, they discovered a new unfolding of a nilpotent saddle of codimension 3 with a fixed invariant line. Lai et al.~\cite{lai2020stability} concluded that saddle-node bifurcation, transcritical bifurcation, and Hopf bifurcation exist in a predator-prey model with additive Allee effect and fear effect. Molla et al.~\cite{molla2022dynamics} developed a modified Lotka-Volterra model that incorporates variable prey refuge and Holling type II functional response. They showed that this model exhibits saddle-node bifurcation, Hopf bifurcation, and Bogdanov-Takens bifurcation.

In this paper, we find specific conditions such that model (\ref{A1}) may undergo saddle-node bifurcation, Hopf bifurcation, Bogdanov-Takens bifurcation, homoclinic bifurcation, and saddle-node bifurcation of limit cycles. We also prove the existence of Bogdanov-Takens point of codimension 3 and Hopf bifurcation of codimension 3 under certain parameter conditions.
Through bifurcation diagrams and phase portraits, we reveal surprising biological consequences and emphasize the role of the additive Allee effect in determining a survival threshold to prevent the collapse of the system and the extinction of prey and predator populations.
We also identify and describe the presence of isola and mushrooms of limit cycles in system~(\ref{A1}). The so-called isola bifurcation refers to a closed locus of a solution branch delimited by two fold points -- or, in general, an even number of folds; it may correspond to branches of either equilibrium points, limit cycles, homoclinic orbits, or heteroclinic connections. On the other hand, a mushroom bifurcation is an open locus -- i.e., homeomorphic to an open interval of the real line -- that contains at least two fold points, which is its key distinguishing feature. While mushroom bifurcation and isola bifurcation of equilibrium points are phenomena observed in various contexts, including biological, epidemiological, chemical, and physical models~\cite{avitabile2012numerical, das2023origin,gray1985sustained,otero2023automated,sandstede2012snakes,xu2020vectored},
to the best of our knowledge, we are the first to find isola and mushrooms of limit cycles in a predator-prey system with an additive Allee effect.
Furthermore,  we also find a codimension-2 cusp point of limit cycles -- also, a novelty in predator-prey models with Allee effect.
Indeed, these are some of the highlights of this work as these findings indicate novel mechanisms for the emergence of sustained oscillations and coexistence in a predator-prey interaction.

The remaining sections of this article are organized as follows. In Section 2, we present the analysis of the dynamics near the extinction scenario $y=0$ and $x\rightarrow0$, and provide preliminary results regarding the existence and type of equilibria in system (\ref{A1}). In Section~3, we conduct a bifurcation analysis that includes proving the presence of Bogdanov-Takens bifurcation of codimension 3 and Hopf bifurcation of codimension 3. Section~4 employs numerical bifurcation analysis to visualize some of the obtained results, including saddle-node bifurcations, homoclinic bifurcations, isola  and mushrooms bifurcations of limit cycles, saddle-node bifurcation of limit cycles, and codimension-2 cusp of limit cycles. Finally, the paper concludes with a summary and discussion of the results.

\section{Dynamics near $(0,0)$, the existence and the type of equilibria}

\subsection{The dynamics near $(0,0)$}

It is worth noting that system (\ref{A1}) is not well-defined as $x\rightarrow0$; nevertheless, the important interpretation of $(x,y)=(0,0)$ motivates us to explore the dynamics near this point. It is easy to see that, in the absence of predator ($y=0$), $x(t)$ is a decreasing function of $t$ if $m>br$ and, hence, $x(t)\rightarrow0$ as $t\rightarrow\infty$; on the other hand, $x(t)$ is increasing if $m<br$ and $x(t)\rightarrow0$ as $t\rightarrow-\infty$.

By time rescaling $t=knx(x+b)(x^2+cx+a)\tau$, we can get the polynomial system
\begin{equation}\label{B0}
\begin{array}{rcl}
\dfrac{dx}{d\tau}&=&nx^2(x^2+cx+a)(r(x+b)(k-x)-mk)-knq(x+b)x^{2}y, \vspace{2mm}\\
\dfrac{dy}{d\tau}&=&ksy(x+b)(x^2+cx+a)(nx-y),
\end{array}
\end{equation}
which is $C^{\infty}$-equivalent to system (\ref{A1}) in the region $\Gamma_{1}.$ Obviously, system (\ref{B0}) can be extended continuously to the axis $x=0$ and, hence, it is well-defined in the entire first quadrant.
In particular, $(0,0)$ is an equilibrium of (\ref{B0}).

\begin{theorem}
 Let us define the following regions in the first quadrant of the $(r,m)$-plane:
$$
\begin{array}{ccl}
I&=&\{(r,m)\in\mathbb{R}^2_+:\ br-m<0\},\\
II&=&\{(r,m)\in\mathbb{R}^2_+:\ br-m>0,\ m+b(s-r)>0\},\\
III&=&\{(r,m)\in\mathbb{R}^2_+:\  m+b(s-r)<0\}.
\end{array}
$$
The following holds:
\begin{enumerate}
\item [(1)] If $(r,m)\in I$, the origin of \eqref{B0} is a non hyperbolic atractor.
\item [(2)] If $(r,m)\in II$, the origin of \eqref{B0} has a hyperbolic sector and a parabolic repelling sector.
\item [(3)] If $(r,m)\in III$, the origin of \eqref{B0} is a non hyperbolic saddle.
\end{enumerate}
\end{theorem}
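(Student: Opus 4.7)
The Jacobian of~\eqref{B0} at the origin is identically zero, because every monomial of the prey equation carries a factor of $x^{2}$ and every monomial of the predator equation carries a factor of $y$ of total degree at least two. Reading off the lowest order terms,
\begin{equation*}
\dot x = nka(br-m)\,x^{2}+O(|x|^{3}+|x|^{2}|y|), \qquad \dot y = ksab\,y(nx-y)+O((|x|+|y|)^{3}),
\end{equation*}
so the singularity is highly degenerate and neither Hartman--Grobman nor the standard nilpotent normal form applies. The plan is to resolve the singularity with two directional blow-ups ($y=ux$ and $x=vy$), analyse the resulting hyperbolic pictures on each chart, and collate them into a sector decomposition of the origin in each parameter region. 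As a preliminary step I record that both coordinate axes are invariant: on $\{y=0\}$ the sign of $\dot x$ near the origin is that of $br-m$, while on $\{x=0\}$ the equation reduces to $\dot y=-ksab\,y^{2}<0$. Thus the positive $y$-axis is an attracting separatrix in every region and the positive $x$-axis is attracting in region~I and repelling in regions~II and~III.

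In the first chart $y=ux$, after a further time rescaling by a factor of $x$ (legitimate for $x>0$), the system assumes the regular form
\begin{equation*}
x'=nka(br-m)\,x\bigl[1+O(x)\bigr],\qquad u'=ka\,u\bigl[\,n(m+b(s-r))-sb\,u\,\bigr]+O(x).
\end{equation*}
On the exceptional divisor $\{x=0\}$ the $u$-flow has equilibria at $u=0$ (the $x$-axis direction) and at $u^{\ast}=n(m+b(s-r))/(sb)$, the latter lying in the first quadrant precisely when $m+b(s-r)>0$ (regions~I and~II). The eigenvalues at $(0,0)$ are $\lambda_{1}=nka(br-m)$ and $\lambda_{2}=kan(m+b(s-r))$; at $(0,u^{\ast})$ they are $\lambda_{1}$ and $-\lambda_{2}$. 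A symmetric analysis in the second chart $x=vy$ produces a hyperbolic saddle with eigenvalues $\pm ksab$ at the corner $(0,0)$ and, whenever $m+b(s-r)>0$, a second equilibrium at $v^{\ast}=1/u^{\ast}$ that blows down to the characteristic direction $y=u^{\ast}x$ already detected, confirming that the two desingularised pictures match along their overlap.

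Blowing down, the three cases follow. In region~I, $\lambda_{1}<0$ and (automatically, since $m-br>0$ forces $m+b(s-r)>bs>0$) $\lambda_{2}>0$, so $(0,0)$ of the $u$-chart is a saddle with unstable manifold along the exceptional divisor and $(0,u^{\ast})$ is a stable node; hence every interior orbit converges to the origin tangent to the line $y=u^{\ast}x$, giving the non-hyperbolic attractor of~(1). In region~II, both $\lambda_{1}$ and $\lambda_{2}$ are positive, making $(0,0)$ of the $u$-chart a source and $(0,u^{\ast})$ a saddle whose unstable direction is transverse to the exceptional divisor; the three characteristic rays $\{y=0\}$, $\{y=u^{\ast}x\}$, $\{x=0\}$ then carry, respectively, two repelling separatrices (the $x$-axis and the line $y=u^{\ast}x$) and one attracting separatrix (the $y$-axis). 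The wedge bounded by the two repelling rays is a parabolic repelling sector and the wedge bounded by the attracting $y$-axis and the repelling line $y=u^{\ast}x$ is hyperbolic, which is exactly~(2). In region~III, $u^{\ast}$ and $v^{\ast}$ leave the first quadrant and the corners of both charts are hyperbolic saddles whose stable manifolds lie on the exceptional divisors ($\lambda_{1}>0,\lambda_{2}<0$ in the first chart and $\pm ksab$ in the second); the $y$-axis is therefore the unique stable separatrix, the $x$-axis the unique unstable separatrix, and every interior orbit is swept from the $y$-axis side toward the $x$-axis direction, yielding the non-hyperbolic saddle of~(3).

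The main technical obstacle will be verifying that the $O(x)$ remainders in the desingularised systems do not alter the qualitative conclusions drawn from their leading parts and that the sector decomposition glues consistently across the two charts. Both issues can be handled by invoking the standard blow-down theorem for analytic planar vector fields, which guarantees that the local phase portrait of the resolved system transports faithfully back to~\eqref{B0}; an auxiliary verification is that for parameters in the interior of regions~I--III no equilibrium on either exceptional divisor is itself degenerate, so a single directional blow-up suffices and no iteration is required.
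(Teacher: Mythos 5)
Your proof is correct and follows essentially the same route as the paper: both desingularize the totally degenerate origin of \eqref{B0} by a blow-up, classify the hyperbolic equilibria on the exceptional divisor --- your eigenvalues $nka(br-m)$, $kan(m+b(s-r))$ and $\pm ksab$ match the diagonal entries of the paper's polar-chart Jacobians exactly, as does your characteristic direction $u^{*}=n(m+b(s-r))/(sb)$ with the paper's $v^{*}$ --- and then blow down. The only difference is cosmetic (two directional charts $y=ux$, $x=vy$ versus the paper's polar blow-up), and your sector-by-sector discussion is actually more explicit than the paper's, which defers to its figure; the single slip worth fixing is the claim in region III that the stable manifold of the second-chart corner lies on the exceptional divisor, when in fact (as your own eigenvalue computation and your next clause show) it is the transverse $y$-axis direction.
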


\begin{proof}
Note that the Jacobian matrix of (\ref{B0}) at $(0,0)$ is the null matrix. This observation enlightens us to make the polar blow-up transformation  $x=u\cos(v)$, $y=u\sin(v)$, $t=u\tau$ to obtain
\begin{equation}\label{B1B1}
\begin{array}{rl}
&\dfrac{du}{dt}=-a k u \left(-b n r \cos ^3(v)-b n s \sin ^2(v) \cos (v)+b s \sin ^3(v)+m n \cos ^3(v)\right)+O(u^2), \vspace{2mm}\\
&\dfrac{dv}{dt}=a k \sin (v) \cos (v) (-b n r \cos (v)+b n s \cos (v)-b s \sin (v)+m n \cos (v))+O(u),
\end{array}
\end{equation}
where $(u, v)\in[0,+\infty)\times[0,\frac{\pi}{2}]$. System (\ref{B1B1}) has a maximum of three equilibria at $u=0$:  $(0,0)$, $(0,\frac{\pi}{2})$, and $(0,v^*)$ with $v^*=\arctan\left(\frac{n(m+b(s-r))}{bs}\right)$, provided $m+b(s-r)>0$.
The corresponding Jacobian matrices $J(u^*,v^*)$ are:
$$
J(0,0)=\left(
\begin{array}{cc}
 a k n (b r-m) & 0 \\
 0 & a k n (m+b (s-r)) \\
\end{array}
\right),
\hspace{3mm}
%
%
J\left(0,\frac{\pi}{2}\right)=\left(
\begin{array}{cc}
 -a b k s & 0 \\
 0 & a b k s \\
\end{array}
\right),
$$
%
and
$$
J(0,v^*)=\left(
\begin{array}{cc}
 \frac{a k n (b r-m)}{\sqrt{\left(\frac{n (m+b (s-r))}{b s}\right)^2+1}} &0\\
-\frac{b^2 n^2 s^2 (b (s-r)+m) \left(b^2 (a r s+k n q (s-r))-a k m s+b k m n q\right)}{\left(b^2 \left(n^2+1\right) s^2+2 b n^2 s (m-b r)+n^2 (m-b r)^2\right)^2} & -\frac{a k n (m+b (s-r))}{\sqrt{\left(\frac{n (m+b (s-r))}{b s}\right)^2+1}} \\
\end{array}
\right).
$$
%
Note that condition $br-m<0$ implies $m+b(s-r)>0$; on the other hand, if $m+b(s-r)<0$, then  $br-m>0$.
The result follows from ``blowing down'' the phase portraits of \eqref{B1B1} near the arc $u=0$ with $0\leq v\leq \frac{\pi}{2}$ back to system \eqref{B0}, see {\bf Figure} \ref{origin_blow_up} for details.
\end{proof}

\begin{figure}[h!]
\begin{center}
\includegraphics[width=0.94\textwidth]{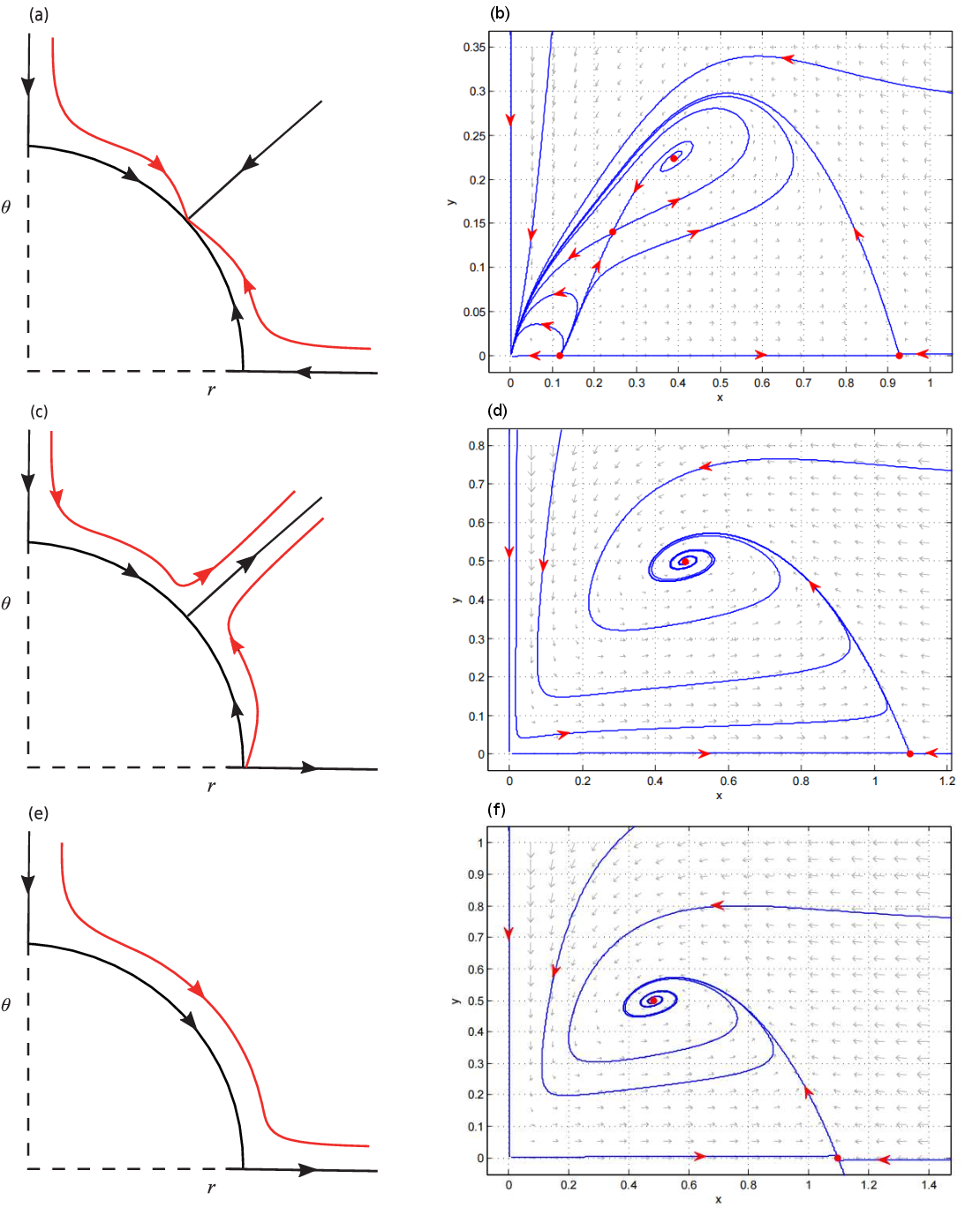}
\end{center}
\caption{\footnotesize (a) Three equilibria of system \eqref{B1B1} when $br-m<0$. (b) The trajectories near $(0,0)$ of system \eqref{B0} when $br-m<0$. (c) Three equilibria of system \eqref{B1B1} when $m+b(s-r)>0, br-m>0$. (d) The trajectories near $(0,0)$ of system \eqref{B0} when $m+b(s-r)>0, br-m>0$.
(e) Two equilibria of system \eqref{B1B1} when $m+b(s-r)<0$. (f) The trajectories near $(0,0)$ of system \eqref{B0} when $m+b(s-r)<0$.}
\label{origin_blow_up}
\end{figure}

\subsection{Existence and type of equilibria}

\begin{lemma}
 The rectangular region $\Gamma_{2}=\{(x,y)|0<x\leq k, 0\leq y \leq nk\}$ is a positively invariant set of the system (\ref{A1}).
\end{lemma}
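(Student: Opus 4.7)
The plan is to verify positive invariance by inspecting the sign of the vector field on each segment of the boundary of $\Gamma_{2}$ and showing that the flow either points into the region or is tangent to it. The boundary of $\Gamma_{2}$ consists of four pieces: the right edge $\{x=k,\ 0\le y\le nk\}$, the top edge $\{0<x\le k,\ y=nk\}$, the bottom edge $\{0<x\le k,\ y=0\}$, and the (open) left edge at $x\to 0^{+}$. It suffices to show that the flow is inward or tangent on each of the three closed pieces and, separately, that no trajectory starting with $x(0)>0$ can reach $x=0$ in finite time.

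On the right edge, substituting $x=k$ into the prey equation of (\ref{A1}) gives
$$
\left.\frac{dx}{dt}\right|_{x=k} \;=\; -\frac{mk}{k+b}-\frac{qky}{k^{2}+ck+a} \;<\; 0
$$
for every $y\ge 0$, so trajectories cross this edge strictly from right to left. On the top edge, substituting $y=nk$ into the predator equation yields
$$
\left.\frac{dy}{dt}\right|_{y=nk} \;=\; snk\!\left(1-\frac{k}{x}\right) \;\le\; 0,
$$
with equality only at the corner $x=k$, so trajectories cannot leave through the top. On the bottom edge $y=0$ we have $\dot y=0$, so the prey axis is invariant and solutions starting with $y(0)=0$ remain on it.

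For the (open) left edge, the key observation is that every term in the prey equation of (\ref{A1}) carries an overall factor of $x$, so we may write $\dot x=x\,F(x,y)$, where
$$
F(x,y)=r\!\left(1-\frac{x}{k}\right)-\frac{m}{x+b}-\frac{qy}{x^{2}+cx+a}
$$
is continuous (hence bounded) on the compact rectangle $[0,k]\times[0,nk]$. If $|F|\le M$ there, then $\dot x\ge -Mx$ and Gronwall's inequality yields $x(t)\ge x(0)\,e^{-Mt}>0$ for all $t\ge 0$ in the interval of existence. Combined with the inward behavior on the other three edges, this proves that $\Gamma_{2}$ is positively invariant.

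The only real subtlety is the singularity of the second equation of (\ref{A1}) as $x\to 0^{+}$, which comes from the $y/(nx)$ term; the Gronwall argument above bypasses this difficulty by showing that trajectories stay bounded away from $x=0$ on every bounded time interval, so the vector field remains smooth along each trajectory inside $\Gamma_{2}$ and the flow is well-defined throughout.
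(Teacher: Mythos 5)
Your proof is correct and rests on the same basic idea as the paper's: checking the sign of the vector field on the boundary of $\Gamma_2$. The paper verifies $\dot x<0$ for $x>k$ and uses the comparison $\dot y< sy(1-y/(nk))$ to get $\dot y<0$ for $y>nk$, then notes the invariance of the $x$-axis; you instead evaluate the field directly on the edges $x=k$ and $y=nk$, which is equivalent. The genuine added value of your version is the treatment of the open left edge: the paper's proof never addresses whether a trajectory could reach $x=0$ in finite time, where system (1) is singular, whereas your factorization $\dot x=xF(x,y)$ with $F$ continuous (hence bounded) on $[0,k]\times[0,nk]$ and the resulting Gronwall bound $x(t)\ge x(0)e^{-Mt}$ closes that gap cleanly. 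One small point worth making explicit: the bound $|F|\le M$ is only guaranteed while the trajectory remains in the rectangle, so the Gronwall estimate and the edge analysis should be combined through a first-exit-time argument (if the trajectory left $\Gamma_2$, the exit would have to occur through one of the four boundary pieces, each of which is excluded); as written your last sentence asserts this combination without spelling it out, but the argument is standard and the conclusion stands.
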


\begin{proof}
By the first equation of system (\ref{A1}), we see that $\frac{dx}{dt}|_{x>k}<0$. Consequently, we just focus on $0<x \leq k$. On the other hand, we can easily get $\frac{dy}{dt}=sy(1-\frac{y}{nx})<sy(1-\frac{y}{nk})$ for $0<x\leq k$, which leads to $\frac{dy}{dt}|_{y>nk}< 0$.
Furthermore, $\frac{dy}{dt}|_{y=0}= 0$, so the $x$-axis is invariant.
Therefore, all solutions of system (\ref{A1}) will ultimately move towards the region $\Gamma_{2}=\{(x,y)|0<x\leq k, 0\leq y \leq nk\}$ remaining there for $t\rightarrow\infty$. This completes the proof.
\end{proof}

For the equilibria of system (\ref{A1}) in the boundary of  region $\Gamma_{2}$, by a  straightforward analysis, we have the following results.

\begin{lemma}
For the boundary equilibria, system (\ref{A1}) has\\
$(I)$ no boundary equilibrium if $m>\frac{r (b+k)^2}{4 k}$;\\
$(II)$ a unique boundary equilibrium given by\\
$(i)$ $B_{1,2}=\left(\frac{k-b}{2},0\right)$ if $m=\frac{r (b+k)^2}{4 k}$, which is a degenerate equilibrium;\\
$(ii)$ $B_{2}=\left(k-b,0\right)$ if $m=br<\frac{r (b+k)^2}{4 k}$, which is a saddle;\\
$(iii)$ $B_{2}=\left(\frac{1}{2} \left(k-b+\sqrt{\frac{r (b+k)^2-4 k m}{r}}\right),0\right)$ if $m<\min\{\frac{r (b+k)^2}{4 k},br\}$, which is a saddle;\\
$(III)$ two boundary equilibria if $br<m<\frac{r (b+k)^2}{4 k}$, given by $B_{1}=\left(\frac{1}{2} \left(k-b-\sqrt{\frac{r (b+k)^2-4 k m}{r}}\right),0\right)$ which is an unstable node, and $B_{2}=\left(\frac{1}{2} \left(k-b+\sqrt{\frac{r (b+k)^2-4 k m}{r}}\right),0\right)$ which is a saddle.
\end{lemma}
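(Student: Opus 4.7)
The plan is to reduce the problem to finding the real positive roots of a quadratic polynomial in $x$, and then determining the topological type of each resulting equilibrium via a Jacobian computation on the invariant $x$-axis.

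First I would set $y=0$ in system \eqref{A1}: the second equation is automatically satisfied, and the first equation reduces to $\bigl(r(1-x/k)-m/(x+b)\bigr)x=0$. Since only $x>0$ is admissible, this is equivalent to the quadratic $r x^{2}-r(k-b)x+k(m-br)=0$. Applying the quadratic formula and simplifying $r^{2}(k-b)^{2}-4rk(m-br)=r^{2}(k+b)^{2}-4rkm$, I obtain the candidate abscissas $x_{1,2}=\tfrac12\bigl((k-b)\mp\sqrt{(r(k+b)^{2}-4km)/r}\bigr)$. Case (I) corresponds to a negative discriminant, i.e.\ $m>r(b+k)^{2}/(4k)$, and yields no real roots. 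The threshold $m=r(b+k)^{2}/(4k)$ produces the double root $x=(k-b)/2$ of case (II)(i). For $m<br$, Vieta's formulas give a negative product of roots $k(m-br)/r<0$, so exactly one root is positive, namely $B_{2}$ in case (II)(iii); the boundary value $m=br$ collapses the quadratic to $rx(x-(k-b))=0$, leaving the single positive root $x=k-b$ of case (II)(ii). Finally, when $br<m<r(b+k)^{2}/(4k)$, both roots are real, distinct and have positive sum and positive product, hence are both positive: this is case (III).

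To classify each hyperbolic boundary equilibrium, I would compute the Jacobian of \eqref{A1} along $y=0$. Writing $g(x):=r(1-x/k)-m/(x+b)$, at any boundary equilibrium $(x_i,0)$ one has $g(x_i)=0$, so the Jacobian is upper triangular:
\begin{equation*}
J(x_i,0)=\begin{pmatrix} g'(x_i)\,x_i & -\dfrac{q x_i}{x_i^{2}+c x_i+a} \\[1mm] 0 & s \end{pmatrix},\qquad g'(x)=-\frac{r}{k}+\frac{m}{(x+b)^{2}}.
\end{equation*}
Thus the eigenvalues are $\lambda_{1}=g'(x_i)\,x_i$ and $\lambda_{2}=s>0$. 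Since $g(x)\to-\infty$ as $x\to\infty$ and $g$ has at most one positive critical point, at the larger root $x_{2}$ the function $g$ is decreasing, so $g'(x_{2})<0$ and $B_{2}$ is a saddle; at the smaller root $x_{1}$, when it exists and is positive, $g$ is increasing, so $g'(x_{1})>0$ and $B_{1}$ is an unstable node. This takes care of all hyperbolic cases in (II)(ii), (II)(iii) and (III).

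The only delicate step is the degenerate case (II)(i), where $x_{1}=x_{2}=(k-b)/2$ forces $g'(x_i)=0$ and hence $\lambda_{1}=0$, so the equilibrium is non-hyperbolic. I would simply note that the vanishing discriminant produces a double zero of the right-hand side of the prey equation, confirming the degenerate (saddle-node type) nature asserted in the statement; a detailed unfolding is deferred to the bifurcation analysis of Section~3, where this locus corresponds to a saddle-node bifurcation of boundary equilibria. This is the only place where extra care is required; the rest of the argument is a routine discriminant-and-Jacobian bookkeeping.
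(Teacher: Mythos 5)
Your proposal is correct and is precisely the ``straightforward analysis'' the paper invokes without writing out: reduce to the quadratic $rx^{2}-r(k-b)x+k(m-br)=0$, sort cases by discriminant and Vieta, and classify each root via the triangular Jacobian on the invariant $x$-axis with eigenvalues $g'(x_i)x_i$ and $s>0$. The only caveat (shared with the lemma as stated, not a defect of your argument relative to the paper) is that cases (II) and (III) implicitly require $k>b$ for the relevant roots to be positive.
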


Next, we turn to consider the existence of the positive equilibria of (\ref{A1}). Assuming that $E=(x,y)$ is any positive equilibrium of system (\ref{A1}), we can easily obtain that $y=nx$ from the second equation of (\ref{A1}). Substituting it into the first equation, we conclude that $x$ is determined by the following equation:
\begin{eqnarray}\label{a00}
F(x)=\frac{-x(a_{4}x^4+a_{3}x^{3}+a_{2}x^{2}+a_{1}x+a_{0})}{h(x)},
\end{eqnarray}
where
\begin{eqnarray*}
&&a_{4}=r, \ \ a_{3}=r (b+c-k), \ \ a_{2}=r (a+b c)+k (-r (b+c)+m+n q), \\
&&a_{1}=a r (b-k)+c k (m-b r)+b k n q, \ \ a_{0}=a k (m-b r),\\
&&h(x)=k (b+x) (a+c x+x^2).
\end{eqnarray*}

By observation, we only need to study the positive roots of
\begin{eqnarray}\label{A5}
\bar{F}(x):= a_{4}x^4+a_{3}x^{3}+a_{2}x^{2}+a_{1}x+a_{0}=0.
\end{eqnarray}
According to Descartes's rule of signs, equation (\ref{A5}) can have a maximum of four positive roots, which we denote in ascending order as $x_{1}<x_{2}<x_{3}<x_{4}$. The corresponding equilibria of~(\ref{A1}) will be denoted as $E_{1}=(x_{1},y_{1}), E_{2}=(x_{2},y_{2}), E_{3}=(x_{3},y_{3})$, and $E_{4}=(x_{4},y_{4})$, respectively. For a detailed analysis of the existence of positive equilibria, please refer to~\textbf{Appendix \ref{AppA}}.

The Jacobian matrix of~\eqref{A1} at any positive equilibrium $E=(x,nx)$ is
\begin{eqnarray*}
J(E)=
\left(
\begin{array}{cc}
\frac{n q x (x^2-a)}{(a+x (c+x))^2}-\frac{b m}{(b+x)^2}-\frac{2 r x}{k}+r & -\frac{q x}{a+x (c+x)}\\
n s & -s \\
\end{array}
\right),
\end{eqnarray*}
and the corresponding characteristic polynomial is
\begin{eqnarray}\label{a000}
\lambda^{2}-p(x)\lambda+q(x)=0,
\end{eqnarray}
in which
\begin{equation}\label{a0}
\begin{array}{rl}
&q(x)=\det(J(E))=s\left(\frac{nqx(2a+cx)}{(a+x(c+x))^2}+\frac{bm}{(b+x)^2}+r\left(\frac{2x}{k}-1\right)\right)=-sF'(x),\\
&p(x)=\mathrm{tr}(J(E))=\frac{nqx(x^2-a)}{(a+x(c+x))^2}-\frac{bm}{(b+x)^2}-\frac{2rx}{k}+r-s, \ \
{\rm and}\\
&p'(x)=\frac{nq(-a^2+ax(c+6x)+x^3(c-x))}{(a+x(c+x))^3}+\frac{2bm}{(b+x)^3}-\frac{2r}{k}.
\end{array}
\end{equation}

By standard local analysis  we get the following result.

\begin{proposition}
Let $x_{i}$ be a positive root of~\eqref{A5}.
The following statements hold:\\
(I) if $F'(x_{i})>0$, then $E_{i}$ is a saddle;\\
(II) if $F'(x_{i})<0$ and $p(x_i)<0$, then $E_{i}$ is an attracting node or focus;\\
(III) if $F'(x_{i})<0$ and $p(x_i)>0$,  then $E_{i}$ is a repelling node or focus.\\
(IV) if $F'(x_{i})<0$ and $p(x_i)=0$,  then $E_{i}$ is a linearized center. 
\end{proposition}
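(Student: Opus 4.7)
The plan is to apply the standard trace-determinant classification to the Jacobian $J(E_i)$ whose characteristic polynomial $\lambda^2 - p(x_i)\lambda + q(x_i) = 0$ is already written in \eqref{a000}. The crucial algebraic ingredient is the identity $q(x) = -sF'(x)$ recorded in \eqref{a0}, which converts the geometric condition ``how the curve $\bar F$ crosses the $x$-axis at the root $x_i$'' into a sign condition on $\det J(E_i)$. With this translation in hand, all four cases reduce to reading off the product and sum of eigenvalues of a $2\times 2$ real matrix.

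For case~(I), observe that if $F'(x_i) > 0$ then, since $s > 0$, we have $q(x_i) = -sF'(x_i) < 0$. The product of the eigenvalues is strictly negative, so they are real and of opposite sign, making $E_i$ a hyperbolic saddle. For cases~(II)--(IV) we have $F'(x_i) < 0$, and hence $q(x_i) > 0$, so the eigenvalues are either a complex conjugate pair or two real numbers of the same sign; in both situations the sign of the real parts is governed entirely by the trace $p(x_i)$. Namely, if $p(x_i) < 0$ both eigenvalues have negative real part, producing an attracting node or focus according to the sign of the discriminant $p(x_i)^2 - 4q(x_i)$; if $p(x_i) > 0$ the symmetric reasoning gives a repelling node or focus; and if $p(x_i) = 0$ the eigenvalues are $\pm i\sqrt{q(x_i)}$, so $E_i$ is a linearized center.

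Because the identity $q(x) = -sF'(x)$ is already listed as part of \eqref{a0}, the entire proposition reduces to the textbook trace-determinant plane classification and there is no substantial obstacle to overcome. If one wished to re-derive that identity as part of the write-up, it is a short computation: from $F(x) = -x\bar F(x)/h(x)$ the quotient rule gives $F'(x_i) = -x_i\bar F'(x_i)/h(x_i)$ once the two terms containing $\bar F(x_i)$ drop out because $x_i$ is a root of \eqref{A5}; matching this expression with $-\det J(E_i)/s$ computed directly from the entries of the Jacobian yields $q(x_i) = -sF'(x_i)$. The only piece of arithmetic bookkeeping in the whole argument is that verification, while the qualitative conclusion in each of the four cases follows at once from the signs of $p(x_i)$ and $q(x_i)$.
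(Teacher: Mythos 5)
Your proposal is correct and follows essentially the same route as the paper, which records the characteristic polynomial $\lambda^{2}-p(x)\lambda+q(x)=0$ together with the identity $q(x)=-sF'(x)$ in \eqref{a0} and then invokes the standard trace--determinant classification. Your optional re-derivation of $F'(x_i)=-x_i\bar F'(x_i)/h(x_i)$ matches the computation the paper performs immediately after the proposition.
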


By (\ref{a00}) and (\ref{A5}), we see that $F'(x)=-\frac{\bar{F}(x)}{h(x)}-\frac{x(\bar{F}'(x)h(x)-\bar{F}(x)h'(x))}{h^{2}(x)}
=-\frac{x\bar{F}'(x)}{h(x)}$. Furthermore, from \textbf{Appendix \ref{AppA}}, we obtain that  $F'(x_{1}), F'(x_{3})<0$ when~(\ref{A1}) has four simple positive equilibria $E_1, E_2, E_3$ and $E_4$,
 with the order $x_{1}<x_{2}<x_{3}<x_{4}$ specified above. Therefore, in this situation, $E_{1}$ and $E_{3}$ are saddles.

Based on the analysis in \textbf{Appendix \ref{AppA}}, we can conclude that two positive equilibria of system (\ref{A1}) may coincide at $E_{1,2}$, $E_{2,3}$, or $E_{3,4}$, where $E_{i,i+1}$ refers to the ``collision'' of $E_{i}$ with $E_{i+1}$ (where $i=1,2,3$). For simplicity of notation, we denote these coincidence points as $E^{*}=(x^{*}, nx^{*})$.

\begin{theorem}\label{tha1} Let $F$ be as in \eqref{a00} and let $p$ be as in \eqref{a0}.
Suppose that $F(x^{*})=F'(x^{*})=0$ and $F''(x^{*})\neq 0$. The following statements hold for the equilibrium $E^{*}=(x^{*}, nx^{*})$ of  (\ref{A1}):\\
(I) if $p(x^{*}) \neq 0$, then $E^{*}$ is a saddle-node point;\\
(II) if $p(x^{*})=0$ and $p'(x^{*})\neq 0$, then $E^{*}$ is a cusp of codimension 2;\\
(III) if $p(x^{*})=p'(x^{*})= 0$ and $b\neq b_{\pm}$, then $E^{*}$ is a cusp of codimension 3, where
\begin{eqnarray}\label{a22}
b_{\pm}=\frac{x^{*}(C\pm (a+x^{*}(c+x^{*}))^2\sqrt{E})}{4B},
\end{eqnarray}
and
\begin{eqnarray*}
&&B=a^6+a^5 x^{*} (5 c+3 x^{*})+2 a^4 x^{*2} (5 c^2+14 c x^{*}+18 x^{*2})-a^3 x^{*4} (7 c^2+14 c x^{*}+30 x^{*2})-a^2\\
&& \ \ \ \ \ \ \times x^{*4}(12 c^4+97 c^3 x^{*}+309 c^2 x^{*2}+476 c x^{*3}+275 x^{*4})+a x^{*6} (4 c^4+34 c^3 x^{*}+119 c^2 x^{*2}\\
&& \ \ \ \ \ \ +217 c x^{*3}+155 x^{*4})-x^{*9} (c^3+5 c^2 x^{*}+16 c x^{*2}+18 x^{*3}),\\
&&C=-a^5 c x^{*}+20 a^5 x^{*2}-29 a^4 c^2 x^{*2}-105 a^4 c x^{*3}-184 a^4 x^{*4}-43 a^3 c^3 x^{*3}-280 a^3 c^2 x^{*4}-670\\
&& \ \ \ \ \ \ \times a^3 c x^{*5}-424 a^3 x^{*6}+9 a^2 c^4 x^{*4}+91 a^2 c^3 x^{*5}+450 a^2 c^2 x^{*6}+1138 a^2 c x^{*7}+960 a^2 x^{*8}-6\\
&& \ \ \ \ \ \ \times a c^4 x^{*6}-21 a c^3 x^{*7}-40 a c^2 x^{*8}-225 a c x^{*9}-332 a x^{*8}+c^4 x^{*8}+5 c^3 x^{*9}-5 c^2 x^{*10}-9 c\\
&& \ \ \ \ \ \ \times x^{*11}+24 x^{*12},\\
&&E=(a^2+3 a x^{*} (c+2 x^{*})-x^{*3} (c+3 x^{*})) (8 a^6+8 a^5 x^{*} (7 c+29 x^{*})-a^4 x^{*2} (23 c^2+216 c x^{*}\\
&& \ \ \ \ \ \ +848 x^{*2})+a^3 x^{*3} (27 c^3+296 c^2 x^{*}+1256 c x^{*2}+2416 x^{*3})-a^2 x^{*5} (27 c^3+142 c^2 x^{*}+840\\
&& \ \ \ \ \ \times c x^{*2}+2216 x^{*3})+a x^{*7} (9 c^3-80 c^2 x^{*}-208 c x^{*2}+520 x^{*3})+x^{*9} (-(c^3+3 c^2 x^{*}-48 c x^{*2}\\
&& \ \ \ \ \ +48 x^{*3}))).
\end{eqnarray*}
\end{theorem}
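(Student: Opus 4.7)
The plan is to use classical normal form reduction for planar vector fields near a degenerate equilibrium, following the standard recipe (see e.g. Kuznetsov). First I translate $E^{*}=(x^{*},nx^{*})$ to the origin by $X=x-x^{*}$, $Y=y-nx^{*}$ and Taylor expand \eqref{A1} up to the order required in each case (quadratic for (I), cubic for (II), quintic for (III)). A key observation I will use throughout is that by \eqref{a0}, the hypothesis $F'(x^{*})=0$ means $\det J(E^{*})=0$, and $p(x^{*})$ is exactly $\mathrm{tr}\,J(E^{*})$. Also, since $F'(x)=-x\bar F'(x)/h(x)$ with $x^{*}>0$, the nondegeneracy $F''(x^{*})\neq 0$ is equivalent to $\bar F''(x^{*})\neq 0$.

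For (I), the Jacobian has a simple zero eigenvalue and a nonzero one. After a linear change aligning the left and right null vectors of $J(E^{*})$, the center manifold is one-dimensional and the reduced flow on it has the form $\dot u = c_{2}u^{2}+O(u^{3})$, where a direct computation shows $c_{2}$ is a nonzero multiple of $F''(x^{*})$. The standard saddle-node theorem then yields the conclusion. For (II), the extra condition $p(x^{*})=0$ forces $J(E^{*})$ to be nilpotent with a one-dimensional generalized eigenspace. A linear change brings the system to Jordan form $\dot\xi=\eta+\cdots$, $\dot\eta=\cdots$, and successive near-identity transformations reduce it to the Bogdanov--Takens normal form
\begin{equation*}
\dot\xi=\eta,\qquad \dot\eta = A\,\xi^{2}+B\,\xi\eta + O(\|(\xi,\eta)\|^{3}).
\end{equation*}
Tracking the coefficients, $A$ is proportional to $F''(x^{*})$ and $B$ is proportional to $p'(x^{*})$; both are nonzero under the hypotheses, so $E^{*}$ is a cusp of codimension~$2$.

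For (III), the further degeneracy $p'(x^{*})=0$ makes $B=0$, so I continue the normal form reduction to the next order, reaching
\begin{equation*}
\dot\xi=\eta,\qquad \dot\eta = A\,\xi^{2} + C_{3}\,\xi^{3}+D_{3}\,\xi^{2}\eta + O(\|(\xi,\eta)\|^{4}),
\end{equation*}
which, via one more near-identity change that removes the $\xi^{3}$ term, produces the codimension-$3$ cusp normal form whose leading nondegeneracy coefficient is a rational function $\Phi(a,b,c,x^{*})$. Using the defining relations $F(x^{*})=F'(x^{*})=p(x^{*})=p'(x^{*})=0$ to eliminate $m$, $q$, $r$, $s$ in favour of $a$, $b$, $c$, $x^{*}$, $n$, $k$, the numerator of $\Phi$ becomes a polynomial that is quadratic in $b$; solving that quadratic by the usual formula gives precisely the roots $b_{\pm}$ displayed in \eqref{a22}, with $B$ playing the role of the quadratic coefficient, $-C/2$ the linear coefficient (hence the $\pm$ and the $4B$ in the denominator), and $E$ the discriminant. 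The condition $b\neq b_{\pm}$ thus asserts $\Phi\neq 0$ and gives a cusp of codimension~$3$.

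The principal obstacle is the symbolic bookkeeping in (III): carrying out the successive near-identity transformations to cubic/quartic order while simultaneously imposing the four scalar conditions $F=F'=p=p'=0$, and then collecting the final normal form coefficient and verifying that, as a function of $b$, it factors as the quadratic whose roots are exactly $b_{\pm}$ with discriminant $E$. I will carry out this reduction with a computer algebra system: the computational challenge lies not in any one step but in controlling the algebraic size of the intermediate expressions (note already the bulky polynomials $B$, $C$, $E$ in the statement) and confirming that the claimed form of $b_{\pm}$ really matches the quadratic produced by the normalization procedure.
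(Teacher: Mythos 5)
Your proposal follows essentially the same route as the paper: a center-manifold reduction for (I) with the quadratic coefficient proportional to $F''(x^{*})$, reduction to the Bogdanov--Takens normal form $\dot\xi=\eta,\ \dot\eta=A\xi^{2}+B\xi\eta+\cdots$ with $A\propto F''(x^{*})$ and $B\propto p'(x^{*})$ for (II), and continuation of the normalization to the $\xi^{3}\eta$ term for (III), where the nondegeneracy coefficient's numerator is a polynomial quadratic in $b$ whose roots are $b_{\pm}$ with discriminant governed by $E$. The paper carries out exactly this computation (invoking Perko's Lemma 3.1 for step (II) and Proposition 5.3 of Lemontagne et al.\ for the final reduction in (III)), so your plan is correct and matches the published proof.
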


\begin{proof}
We begin with the proof of statement $(I)$. From conditions $F(x^{*})=F'(x^{*})=0$ we can conclude that $m=\frac{r (b+x^{*})^2 (a k+x^{*2} (c-k+2 x^{*}))}{k (a (b+2 x^{*})+x^{*2} (c-b))}$ and $q=\frac{r (a+x^{*} (c+x^{*}))^2 (-b+k-2 x^{*})}{k n (a (b+2 x^{*})+x^{*2} (c-b))}$. Next, carrying out the transformation $X=x-x^{*}, y=y-n x^{*}$ to bring $E^{*}$ to the origin and expanding the resulting system around the origin up to third order terms yields:
\begin{eqnarray*}
\nonumber&&\frac{dx}{dt}=\hat{a}_{10}x+\hat{a}_{01}y+\hat{a}_{20}x^{2}+\hat{a}_{11}xy
+\hat{a}_{30}x^{3}+\hat{a}_{21}x^{2}y+o(|x,y|^{3}),\\
&&\frac{dy}{dt}=\hat{b}_{10}x+\hat{b}_{01}y+\hat{b}_{20}x^{2}+\hat{b}_{11}xy
+\hat{b}_{02}y^2+\hat{b}_{30}x^{3}+\hat{b}_{21}x^{2}y+\hat{b}_{12}xy^{2}+o(|x,y|^{3}),
\end{eqnarray*}
where $\hat{a}_{ij}$ and $\hat{b}_{ij}$ are given in \textbf{Appendix \ref{AppB}}.

Define the change of coordinates $X=\frac{\hat{b}_{01} x-\hat{a}_{01} y}{\hat{a}_{10}+\hat{b}_{01}}, Y=\frac{\hat{a}_{10} x+\hat{a}_{01} y}{\hat{a}_{10}+\hat{b}_{01}}$. Then we have
\begin{equation}\label{a1111}
\begin{array}{rrl}
&&\dfrac{dx}{dt}=\hat{c}_{20}x^{2}+\hat{c}_{11}xy+\hat{c}_{02}y^{2}+\hat{c}_{30}x^{3}+\hat{c}_{21}x^{2}y+\hat{c}_{12}xy^{2}
+\hat{c}_{03}y^{3}+o(|x,y|^{3}), \vspace{2mm}\\
&&\dfrac{dy}{dt}=\hat{d}_{01}y+\hat{d}_{20}x^{2}+\hat{d}_{11}xy+\hat{d}_{02}y^{2}+\hat{d}_{30}x^{3}+\hat{d}_{21}x^{2}y+\hat{d}_{12}xy^{2}
+\hat{d}_{03}y^{3}+o(|x,y|^{3}),
\end{array}
\end{equation}
where $\hat{c}_{ij}$ and $\hat{d}_{ij}$ are given in \textbf{Appendix \ref{AppB}}.

Note that $\hat{d}_{01}=p(x^{*})\neq0$ and $\hat{c}_{20}=\frac{k s (a (b+2 x^{*})+x^{*2} (c-b))}{2 A}F''(x^{*})$, in which $A=x^{*2} (2 a r+c r (b-k)+k s (c-b))+a x^{*} (b r-k r+2 k s)+a b k s+r x^{*3} (b+2 c-k)+2 r x^{*4}$. It follows that there exists a center manifold specified locally near the origin as
\begin{eqnarray*}
y=\frac{\hat{c}_{20}}{p(x^{*})}x^{2}+O(x^{3})
\end{eqnarray*}
 such that system (\ref{a1111}) restricted to this center manifold is expressed by
\begin{eqnarray}\label{a200}
\frac{dx}{dt}=\hat{c}_{20}x^{2}+o(x^{2}).
\end{eqnarray}
Condition $F''(x^{*}) \neq 0$ tells us that $\hat{c}_{20} \neq 0$. Therefore, (\ref{a200}) is topologically equivalent to
\begin{eqnarray*}
\frac{dx}{dt}=\pm x^{2}+o(x^{2}),
\end{eqnarray*}
which shows that  the equilibrium $E^{*}=(x^{*}, nx^{*})$ is a saddle-node point;

Proof of statement $(II)$. According to $F(x^{*})=F'(x^{*})=p(x^{*})=0$, we can easily obtain that $r=-\frac{k s (a (b+2 x^{*})+x^{*2} (c-b))}{x^{*} (a+x^{*} (c+x^{*})) (b-k+2 x^{*})}, m=-\frac{s (b+x^{*})^2 (a k+x^{*2} (c-k+2 x^{*}))}{x^{*} (a+x^{*} (c+x^{*})) (b-k+2 x^{*})}, n=\frac{s (a+x^{*} (c+x^{*}))}{q x^{*}}$. Plugging this into system (\ref{A1}) and making the following transformation
\begin{eqnarray*}
X=x-x^{*}, Y=y-nx^{*},
\end{eqnarray*}
we have
\begin{eqnarray*}
\nonumber&&\frac{dx}{dt}=a_{10}x+a_{01}y+a_{20}x^{2}+a_{11}xy+o(|x,y|^{2}),\\
&&\frac{dy}{dt}=b_{10}x+b_{01}y+b_{20}x^{2}+b_{11}xy+b_{02}y^2+o(|x,y|^{2}),
\end{eqnarray*}
where
\begin{eqnarray*}
&&a_{10}=s,\ \
a_{01}=-\frac{q x^{*}}{a+x^{*} (c+x^{*})},\\
&&a_{20}=\frac{s}{(a+x^{*} (c+x^{*}))^2} (\frac{1}{x^{*} (b-k+2 x^{*})} (a+x^{*} (c+x^{*})) (a (b+2 x^{*})+x^{*2} (c-b))+\\
&& \ \ \ \ \ \ \ \ \frac{b}{x^{*} (b+x^{*}) (b-k+2 x^{*})}(a+x^{*} (c+x^{*}))(a k+x^{*2} (c-k+2 x^{*}))+a (c+3 x^{*})-x^{*3}),\\
&&a_{11}=\frac{q (x^{*2}-a)}{(a+x^{*} (c+x^{*}))^2},\
b_{10}=\frac{s^2 (a+x^{*} (c+x^{*}))}{q x^{*}},\
b_{01}=-s,\\
&&b_{20}=-\frac{s^2 (a+x^{*} (c+x^{*}))}{q x^{*2}},\
b_{11}=\frac{2 s}{x^{*}},\
b_{02}=-\frac{q}{a+x^{*} (c+x^{*})}.
\end{eqnarray*}

Introducing the change of variables and rescaling of time $X=x,Y=-\frac{b_{10}}{a_{10}}x+y, dt=\frac{1}{a_{01}}d\tau$, we have that
\begin{equation}\label{a0000}
\begin{array}{rrl}
&&\dfrac{dx}{dt}=y+c_{20}x^{2}+c_{11}xy+o(|x,y|^{2}),\vspace{2mm}\\
&&\dfrac{dy}{dt}=d_{20}x^{2}+d_{11}xy+d_{02}y^2+o(|x,y|^{2}),
\end{array}
\end{equation}
where
\begin{eqnarray*}
&&c_{20}=\frac{a_{11} b_{10}+a_{10} a_{20}}{a_{01} a_{10}},\ \
c_{11}=\frac{a_{11}}{a_{01}},\ \
d_{20}=\frac{a_{10}^2 b_{20}+a_{10} b_{10} (b_{11}-a_{20})+b_{10}^2 (b_{02}-a_{11})}{a_{01} a_{10}^2},\\
&&d_{11}=\frac{a_{10} b_{11}-b_{10} (a_{11}-2 b_{02})}{a_{01} a_{10}},\ \
d_{02}=\frac{b_{02}}{a_{01}}.
\end{eqnarray*}

By Lemma 3.1 in Perko \cite{perko2001nonlinear}, system (\ref{a0000}) is equivalent to
\begin{eqnarray*}
\nonumber&&\frac{dx}{dt}=y+o(|x,y|^{2}),\\
&&\frac{dy}{dt}=e_{20}x^{2}+e_{11}xy+o(|x,y|^{2}),
\end{eqnarray*}
where $e_{20}=d_{20}=\frac{s (a+x^{*} (c+x^{*}))^2}{2 q^2 x^{*2}} F''(x^{*}),\ e_{11}=d_{11}+2c_{20}=-\frac{a+x^{*} (c+x^{*})}{q x^{*}}p'(x^{*})$. From $F''(x^{*})\neq 0$ and $p'(x^{*})\neq 0$, $E^{*}=(x^{*}, nx^{*})$ is a cusp of codimension 2.

Finally, the proof of statement $(III)$. Since $F(x^{*})=F'(x^{*})=p(x^{*})=p'(x^{*})=0$, we have
\begin{equation}\label{1000}
\begin{array}{rrl}
&&r=\dfrac{s}{2 x^{*2} (a+x^{*} (c+x^{*}))^2}\left(a^2 (b+x^{*}) (b+2 x^{*})+a x^{*} (3 b^2+9 b x^{*}+8 x^{*2}) (c+2 x^{*})\right.\vspace{2mm}\\
&& \ \ \ \ \ \ \left. -x^{*3}  (b^2 (c+3 x^{*})+3 b x^{*} (c+3 x^{*})+2 x^{*} (-c^2-2 c x^{*}+x^{*2}))\right), \vspace{2mm}\\
&&m=\dfrac{s (b+x^{*})^3 (a^2+3 a x^{*} (c+2 x^{*})+x^{*3} (-(c+3 x^{*})))}{2 x^{*2} (a+x^{*} (c+x^{*}))^2},\
n=\dfrac{s (a+x^{*} (c+x^{*}))}{q x^{*}}, \vspace{2mm}\\
&&k=\dfrac{1}{a^2 (b-x^{*})+a x^{*} (3 b (c+2 x^{*})+x^{*} (c+6 x^{*}))+x^{*3} (x^{*} (x^{*}-c)-(b (c+3 x^{*})))}\vspace{2mm}\\
&& \ \ \ \ \ \ \times \left(a^2 (b +x^{*}) (b+2 x^{*})+a x^{*} (3 b^2+9 b x^{*}+8 x^{*2}) (c+2 x^{*})-x^{*3} (b^2 (c+3 x^{*})\right. \vspace{2mm}\\
&& \ \ \ \ \ \ \left. +3 b x^{*} (c+3 x^{*})+2 x^{*} (-c^2-2 c x^{*}+x^{*2}))\right).
\end{array}
\end{equation}

Similar to $(II)$, we can translate the positive equilibrium point $E^{*}=(x^{*}, nx^{*})$ to the origin by introducing new variables $X=x-x^{*}$ and $Y=y-y^{*}$. For convenience, in the subsequent steps, we will continue to denote the variables as $x$, $y$, and $t$ instead of $X$, $Y$, and $\tau$, respectively. We then perform a Taylor expansion around the origin, resulting in the following equivalent system:
\begin{equation}\label{c4}
\begin{array}{rrl}
&&\dfrac{dx}{dt}=a^{*}_{10}x+a^{*}_{01}y+a^{*}_{20}x^{2}+a^{*}_{11}xy+a^{*}_{30}x^{3}+a^{*}_{21}x^{2}y+a^{*}_{40}x^{4}+a^{*}_{31}x^{3}y+o(|x,y|^{4}),\vspace{2mm}\\
&&\dfrac{dy}{dt}=b^{*}_{10}x+b^{*}_{01}y+b^{*}_{20}x^{2}+b^{*}_{11}xy+b^{*}_{02}y^{2}+b^{*}_{30}x^{3}+b^{*}_{21}x^{2}y
+b^{*}_{12}xy^{2}+b^{*}_{40}x^{4}\\
&& \ \ \ \ \ \ \ \ +b^{*}_{31}x^{3}y+b_{22}x^{2}y^{2}+o(|x,y|^{4}),
\end{array}
\end{equation}
where $a^{*}_{ij}$ and $b^{*}_{ij}$ are given in \textbf{Appendix \ref{AppB}}.

Setting $X=x, Y=\frac{dx}{dt}$, we rewrite system (\ref{c4}) as
\begin{equation}\label{c5}
\begin{array}{rrl}
&&\dfrac{dx}{dt}=y, \vspace{2mm}\\
&&\dfrac{dy}{dt}=c^{*}_{20}x^{2}+c^{*}_{02}y^{2}+c^{*}_{30}x^{3}+c^{*}_{21}x^{2}y+c^{*}_{12}xy^{2}+c^{*}_{40}x^{4}
+c^{*}_{31}x^{3}y+c^{*}_{22}x^{2}y^{2}\\
&& \ \ \ \ \ \ \ \ +o(|x,y|^{4}),
\end{array}
\end{equation}
where $c^{*}_{ij}$ are given in \textbf{Appendix \ref{AppB}}.

By a time rescaling $dt=(1-c^{*}_{02}x)d\tau$ and a linear transformation $X=x, Y=(1-c^{*}_{02}x)y$ in~(\ref{c5}), we get
\begin{equation}\label{c6}
\begin{array}{rrl}
&&\dfrac{dx}{dt}=y,\vspace{2mm}\\
&&\dfrac{dy}{dt}=c^{*}_{20}x^{2}+(c^{*}_{30}-2c^{*}_{02}c^{*}_{20})x^{3}+c^{*}_{21}x^{2}y
+(c^{*}_{12}-c^{*2}_{02})xy^2+(c^{*2}_{02} c^{*}_{20}-2c^{*}_{02}c^{*}_{30}+c^{*}_{40})x^{4}\\
&& \ \ \ \ \ \ \ \ +(c^{*}_{31}-c^{*}_{02}c^{*}_{21})x^{3}y+(c^{*}_{22}-c^{*3}_{02})x^{2}y^{2}+o(|x,y|^{4}).
\end{array}
\end{equation}

Since $c^{*}_{20}=\frac{s^2 (x^{*2}-a)}{2 x^{2} (a+x^{2} (c+x^{2}))}= \frac{s}{2}F''(x^{*}) \neq 0$, by transforming $X=x, Y=\frac{y}{\sqrt{c^{*}_{20}}}, \tau=\sqrt{c^{*}_{20}} t$ (if $c^{*}_{20}>0$) or $X=-x, Y=-\frac{y}{\sqrt{-c^{*}_{20}}}, \tau=\sqrt{-c^{*}_{20}} t$ (if $c^{*}_{20}<0$), system~(\ref{c6}) can be rewritten as
\begin{equation}\label{c7}
\begin{array}{rrl}
&&\dfrac{dx}{dt}=y,\vspace{2mm}\\
&&\dfrac{dy}{dt}=x^{2}\pm\frac{c^{*}_{30}-2 c^{*}_{02} c^{*}_{20}}{c^{*}_{20}}x^{3}+\frac{c^{*2}_{02} c^{*}_{20}-2 c^{*}_{02} c^{*}_{30}+c^{*}_{40}}{c^{*}_{20}} x^{4}+y\left(\frac{c^{*}_{21}}{\sqrt{\pm c^{*}_{20}}} x^{2}\pm \frac{c^{*}_{31}-c^{*}_{02} c^{*}_{21}}{\sqrt{\pm c^{*}_{20}}} x^{3}\right)\\
&& \ \ \ \ \ \ \ \ +(c^{*}_{12}-c^{*2}_{02})xy^{2}\pm (c^{*}_{22}-c^{*3}_{02})x^{2}y^{2}+o(|x,y|^{4}).
\end{array}
\end{equation}

By Proposition 5.3 in Lemontagne et al. \cite{2008Bifurcation}, an equivalent form of~(\ref{c7}) can be obtained as follows
\begin{eqnarray*}
\nonumber&&\frac{dx}{dt}=y,\\
&&\frac{dy}{dt}=x^{2}+Fx^{3}y+o(|x,y|^{4}),
\end{eqnarray*}
where
$$
F=\frac{c^{*}_{21} (c^{*}_{02} c^{*}_{20}-c^{*}_{30})+c^{*}_{20} c^{*}_{31}}{(\pm c^{*}_{20})^{\frac{3}{2}}}
=\frac{s^3 A}{4 x^{*4} (b+x^{*})^2 (a+x^{*} (c+x^{*}))^6 (\pm c^{*}_{20})^{\frac{3}{2}}},
$$
and $A=a^6 (2 b^2-x^{*2})+a^5 x^{*} (2 b^2 (5 c+3 x^{*})+b x^{*} (c-20 x^{*})-9 x^{*2} (c+4 x^{*}))+a^4 x^{*2} (3 x^{*2} (24b^2+35 b c-3 c^2)+20 b^2 c^2+x^{*3} (184 b-63 c)+b c x^{*} (56 b+29 c)-17 x^{*4})+a^3 x^{*4} (-2 b^2(7 c^2+14 c x^{*}+30 x^{*2})+b (43 c^3+280 c^2 x^{*}+670 c x^{*2}+424 x^{*3})+x^{*} (11 c^3+78 c^2 x^{*}+330 c x^{*2}+344 x^{*3}))-a^2 x^{*4} (24 b^2 c^4+2 x^{*4} (275 b^2+569 b c+18 c^2)+c x^{*3} (952 b^2+450b c+9 c^2)+b c^3 x^{*}194 b+9 c)+b c^2 x^{*2} (618 b+91 c)+6 x^{*5} (160 b+39 c)+363 x^{*6})+ax^{*6} (8 b^2 c^4+5 x^{*4}(62 b^2+45 b c-6 c^2)+c x^{*3} (434 b^2+40 b c+c^2)+2 b c^3 x^{*} (34 b+3 c)+7 b c^2 x^{*2} (34 b+3 c)+x^{*5} (332 b-33 c)+76 x^{*6})+x^{*9} (-3 x^{*3} (12 b^2-3 b c+c^2)+c x^{*2} (-32 b^2+5 b c-3 c^2)-b c^3 (2 b+c)-5 bc^2 x^{*} (2 b+c)+3 x^{*4} (3 c-8 b)-3 x^{*5})$.

Taking $b\neq b_{\pm}$ into consideration, we can conclude that $A\neq 0$ and thus $E^{*}=(x^{*}, y^{*})$ is a cusp of codimension 3. This completes the proof.
\end{proof}

\section{Bifurcation analysis}

Based on the cases $(II)$ and $(III)$ of Theorem \ref{tha1}, we observe that system (\ref{A1}) may undergo Bogdanov-Takens bifurcations of codimension two and three at the equilibrium $E^*=(x^{*},y^{*})$, if it exists. These bifurcations will be discussed in detail in this section.

\subsection{Bogdanov-Takens bifurcation of codimension 2}

\begin{theorem}\label{tha2}
 Let $F$ be as in \eqref{a00} and let $p$ be as in \eqref{a0}.
If $F(x^{*})=F'(x^{*})=p(x^{*})=0$, $F''(x^{*})\neq 0, p'(x^{*})\neq 0$ and
$$
\begin{array}{rcl}
k &\neq  &\dfrac{1}{a^2 b+a x^{*} (2 b (c+2 x^{*})+x^{*} (c+4 x^{*}))+x^{*4} (c-b)}\left(a^2 (b+2 x^{*})^2+a x^{*} (2 b^2 (c+2 x^{*})\right.\\
&&\left. +b x^{*} (7 c+12 x^{*})+4 x^{*2} (2 c+3 x^{*}))- (b-c)x^{*4} (b+2 c+4 x^{*})\right),
\end{array}
$$
 then $E^{*}=(x^{*}, nx^{*})$ is a cusp of codimension 2. Moreover, if we choose $n$ and $m$ as bifurcation parameters, then system (\ref{A1}) undergoes a Bogdanov-Takens bifurcation of codimension 2.
\end{theorem}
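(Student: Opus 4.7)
The plan is to prove Theorem~\ref{tha2} by combining the cusp structure already obtained in Theorem~\ref{tha1}(II) with the standard unfolding theory for Bogdanov--Takens singularities. The cusp-of-codimension-2 claim at $E^{*}$ is a direct consequence of Theorem~\ref{tha1}(II), since the hypotheses $F(x^{*})=F'(x^{*})=p(x^{*})=0$, $F''(x^{*})\neq 0$ and $p'(x^{*})\neq 0$ are exactly those of that case. Thus the real content to establish is that the two-parameter family obtained by varying $(n,m)$ near the critical pair $(n^{*},m^{*})$ constitutes a versal unfolding of the BT singularity; the condition on $k$ will turn out to be precisely the transversality condition.

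The concrete steps I would follow are: (i) solve the three simultaneous equations $F(x^{*})=F'(x^{*})=p(x^{*})=0$ for $(n^{*},m^{*})$ in terms of the remaining parameters, fixing the BT point; (ii) introduce perturbations $n=n^{*}+\lambda_{1}$, $m=m^{*}+\lambda_{2}$ and shift $E^{*}$ to the origin with $X=x-x^{*}$, $Y=y-nx^{*}$; (iii) carry out the same sequence of smooth coordinate and time changes used in the proof of Theorem~\ref{tha1}(II) (the linear straightening $X=x$, $Y=-\tfrac{b_{10}}{a_{10}}x+y$, the time rescaling $dt=\frac{1}{a_{01}}d\tau$, and Perko's Lemma~3.1 simplification), now applied to the \emph{parameter-dependent} system. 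The result is a preliminary normal form
\begin{equation*}
\dot{x}=y,\qquad \dot{y}=\mu_{1}(\lambda)+\mu_{2}(\lambda)\,y+\alpha(\lambda)\,x^{2}+\beta(\lambda)\,xy+O(|x,y|^{3}),
\end{equation*}
with $\alpha(0)=\tfrac{s\,(a+x^{*}(c+x^{*}))^{2}}{2q^{2}x^{*2}}F''(x^{*})\neq 0$ and $\beta(0)=-\tfrac{a+x^{*}(c+x^{*})}{qx^{*}}p'(x^{*})\neq 0$, so both nondegeneracy coefficients survive. A final rescaling as in Kuznetsov puts the system into the canonical form $\dot{\xi}_{1}=\xi_{2}$, $\dot{\xi}_{2}=\beta_{1}+\beta_{2}\xi_{1}+\xi_{1}^{2}\pm \xi_{1}\xi_{2}$.

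To conclude that a BT bifurcation of codimension~2 unfolds, I then need to verify the transversality of the reparametrization $\lambda\mapsto(\beta_{1}(\lambda),\beta_{2}(\lambda))$, namely
\begin{equation*}
\det\!\left.\frac{\partial(\beta_{1},\beta_{2})}{\partial(\lambda_{1},\lambda_{2})}\right|_{\lambda=0}\neq 0.
\end{equation*}
By the chain rule this determinant can be pulled back through the composition of smooth maps described above, yielding a rational expression in the original parameters evaluated at the BT point.

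The main obstacle will be the bookkeeping of this pullback: each intermediate transformation depends on $\lambda$ through $(n^{*}+\lambda_{1},m^{*}+\lambda_{2})$, so the derivatives $\partial \mu_{i}/\partial\lambda_{j}$ and $\partial\alpha/\partial\lambda_{j}$, $\partial\beta/\partial\lambda_{j}$ must be computed carefully, and the simplification of the resulting Jacobian is sizeable. I expect -- and this is the payoff of the calculation -- that after cancellation the transversality determinant factors into an invertible part times the polynomial displayed in the statement of the theorem, so that the inequality
\[
k \neq \frac{a^{2}(b+2x^{*})^{2}+a x^{*}(\cdots)-(b-c)x^{*4}(b+2c+4x^{*})}{a^{2}b+a x^{*}(\cdots)+x^{*4}(c-b)}
\]
is exactly what guarantees the Jacobian does not vanish. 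Standard theory (Kuznetsov, Bogdanov, Takens) then gives the local topological equivalence of system~(\ref{A1}) near $(E^{*},n^{*},m^{*})$ with the versal BT normal form, completing the proof. Symbolic computation will be needed to manage the intermediate expressions, but the logical structure is fully routine once the transversality condition is reduced to the displayed inequality on $k$.
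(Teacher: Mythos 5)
Your proposal follows essentially the same route as the paper: the authors likewise perturb $m$ and $n$, push the perturbed system through a chain of smooth coordinate and time changes to the Bogdanov--Takens normal form with unfolding parameters $(\varphi_1,\varphi_2)$, compute the Jacobian $\left|\partial(\varphi_{1},\varphi_{2})/\partial(\delta_{1},\delta_{2})\right|_{\delta=0}$ symbolically, and identify the stated inequality on $k$ as exactly the condition (together with $F''(x^{*})\neq0$ and $p'(x^{*})\neq0$) that makes this determinant nonzero. The only differences are cosmetic (the paper uses the $Y=\dot{x}$ straightening plus a time reparametrization rather than the linear change from Theorem~\ref{tha1}(II), and both defer the algebra to Mathematica), so your plan is correct and matches the published argument.
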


\begin{proof}
Selecting $n$ and $m$ as bifurcation parameters,  we obtain the unfolding system as follows
\begin{equation}\label{a555}
\begin{array}{rrl}
&&\dfrac{dx}{dt}=\left(r \left(1-\dfrac{x}{k}\right)-\dfrac{m+\delta_{1}}{x+b}\right)x-\dfrac{q x y}{x^2+c x+a},\vspace{2mm}\\
&&\dfrac{dy}{dt}=sy \left(1-\dfrac{y}{(n+\delta_{2})x}\right),
\end{array}
\end{equation}
where $r,s,k,m,a,b,q,n>0, c>-2\sqrt{a}$ and $\delta=(\delta_{1},\delta_{2})\sim(0,0)$.

Since $F(x^{*})=F'(x^{*})=p(x^{*})=0$ we know that $r=-\frac{k s (a (b+2 x^{*})+x^{*2} (c-b))}{x^{*} (a+x^{*} (c+x^{*})) (b-k+2 x^{*})}, m= -\frac{s (b+x^{*})^2 (a k+x^{*2} (c-k+2 x^{*}))}{x^{*} (a+x^{*} (c+x^{*})) (b-k+2 x^{*})}$ and $n=\frac{s (a+x^{*} (c+x^{*}))}{q x^{*}}$.  Next, we apply the transformation $X=x-x^{*}$ and $Y=y-nx^{*}$ to~(\ref{a555}) and then expand the resulting equations using Taylor series around the origin. As a result, system (\ref{a555}) is converted into the following form (For convenience, in each subsequent transformation, we will rename $X$, $Y$, and $\tau$ as $x$, $y$, and $t$, respectively):
\begin{equation}\label{a6}
\begin{array}{rrl}
&&\dfrac{dx}{dt}=\bar{a}_{00}+\bar{a}_{10}x+\bar{a}_{01}y+\bar{a}_{20}x^{2}+\bar{a}_{11}xy
+R_{1}(x,y,\delta_{1},\delta_{2}),\vspace{2mm}\\
&&\dfrac{dy}{dt}=\bar{b}_{00}+\bar{b}_{10}x+\bar{b}_{01}y+\bar{b}_{20}x^{2}+\bar{b}_{11}xy
+\bar{b}_{02}y^{2}+R_{2}(x,y,\delta_{1},\delta_{2}).
\end{array}
\end{equation}
Here $\bar{a}_{ij}$ and $\bar{b}_{ij}$ are given in \textbf{Appendix \ref{AppB}}, $R_{1}(x,y,\delta_{1},\delta_{2})$ and $R_{2}(x,y,\delta_{1},\delta_{2})$ represent polynomials in $(x,y,\delta_1, \delta_2)$ of degree of at least 3.

Performing the nonsingular change of coordinate $X=x,Y=\frac{dx}{dt}$, system~\eqref{a6} is transformed into
\begin{equation}\label{a7}
\begin{array}{rrl}
&&\dfrac{dx}{dt}=y,\vspace{2mm}\\
&&\dfrac{dy}{dt}=\bar{c}_{00}+\bar{c}_{10}x+\bar{c}_{01}y+\bar{c}_{20}x^{2}+\bar{c}_{11}xy
+\bar{c}_{02}y^{2}+R_{3}(x,y,\delta_{1},\delta_{2}),
\end{array}
\end{equation}
where $\bar{c}_{ij}$ are given in \textbf{Appendix \ref{AppB}} and $R_{3}(x,y,\delta_{1},\delta_{2})$
represents a polynomial in $(x,y,\delta_1, \delta_2)$ of degree of at least 3.

Next, we can perform a time re-parametrization by introducing $dt=(1-\bar{c}_{02}x)d\tau$, along with the transformation $X=x, Y=(1-\bar{c}_{02}x)y$. This allows us to change system (\ref{a7}) into the following form:
\begin{equation}\label{a9}
\begin{array}{rrl}
&&\dfrac{dx}{dt}=y,\vspace{2mm}\\
&&\dfrac{dy}{dt}=\bar{d}_{00}+\bar{d}_{10}x+\bar{d}_{01}y+\bar{d}_{20}x^{2}+\bar{d}_{11}xy
+R_{4}(x,y,\delta_{1},\delta_{2}),
\end{array}
\end{equation}
where $\bar{d}_{ij}$ are given in \textbf{Appendix \ref{AppB}}  and $R_{4}(x,y,\delta_{1},\delta_{2})$
is a polynomial in $(x,y,\delta_1, \delta_2)$ of degree of at least 3.

Letting $X=x+\frac{\bar{d}_{10}}{2\bar{d}_{20}}, Y=y$, we obtain that
\begin{equation}\label{a1010}
\begin{array}{rrl}
&&\dfrac{dx}{dt}=y,\vspace{2mm}\\
&&\dfrac{dy}{dt}=\bar{e}_{00}^{*}+\bar{e}_{01}y+\bar{e}_{20}x^{2}+\bar{e}_{11}xy+R_{5}(x,y,\delta_{1},\delta_{2}),
\end{array}
\end{equation}
where $\bar{e}_{ij}$ are given in \textbf{Appendix \ref{AppB}} and $R_{5}(x,y,\delta_{1},\delta_{2})$ is a polynomial in $(x,y,\delta_1, \delta_2)$ of degree of at least 3.

Letting $X=\frac{\bar{e}_{11}^{2}}{\bar{e}_{20}}x, Y=\frac{\bar{e}_{11}^{3}}{\bar{e}_{20}^{2}} y, \tau=\frac{\bar{e}_{20}}{\bar{e}_{11}}t$,   system (\ref{a1010}) is equivalent to the following system
\begin{equation}\label{a11}
\begin{array}{rrl}
&&\dfrac{dx}{dt}=y,\vspace{2mm}\\
&&\dfrac{dy}{dt}=\varphi_{1}+\varphi_{2}y+x^{2}+xy+R_{6}(x,y,\delta_{1},\delta_{2}),
\end{array}
\end{equation}
where $R_{6}(x,y,\delta_{1},\delta_{2})$ represents a polynomial in $(x,y,\delta_1, \delta_2)$ of degree of at least 3 with coefficients that depend smoothly on $\delta_{1}$ and $\delta_{2}$ and $\varphi_{1}=\frac{\bar{e}_{00}\bar{e}_{11}^{4}}{\bar{e}_{20}^{3}}, \ \varphi_{2}=\frac{\bar{e}_{01}\bar{e}_{11}}{\bar{e}_{20}}$. Using the software Mathematica, we can calculate that
\begin{eqnarray*}
\left|\frac{\partial(\varphi_{1},\varphi_{2})}{(\delta_{1},\delta_{2})}\right|_{\delta=0}
=-\frac{q u_2 u_3^5}{2 s^2 x^{*4} (b+x^{*})^2 (a+x^{*} (c+x^{*}))^3 (b-k+2 x^{*}) u_1^5},
\end{eqnarray*}
where
\begin{eqnarray*}
&&u_1=a^2 k+a (b^2 (c+3 x^{*})-b (c+3 x^{*}) (k-3 x^{*})+x^{*2} (3 c-3 k+8 x^{*}))+x^{*3} (-(b-c)) (b+\\
&& \ \ \ \ \ \ \ c-k+3 x^{*}),\\
&&u_2=a^2 (b^2-b k+4 b x^{*}+4 x^{*2})+a x^{*} (4 x^{*2} (3 b+2 c-k)-k x^{*}(4b+c)+2 b c (b-k)+b x^{*} (4\\
&& \ \ \ \ \ \ \ \times b+7 c)+12 x^{*3})+x^{*4} (c-b) (b+2 c-k+4 x^{*}),\\
&&u_3=a^2 (b^2-b (k-3 x^{*})+x^{*} (k+2 x^{*}))+a x^{*} (3 b^2 (c+2 x^{*})-3 b (c+2 x^{*}) (k-3 x^{*})+x^{*}(-c\\
&& \ \ \ \ \ \ \ \times k+8 c x^{*}-6 k x^{*}+16 x^{*2}))+x^{*3} (-b^2 (c+3 x^{*})+b (c+3 x^{*}) (k-3 x^{*})+x^{*} (2 c^2-c k\\
&& \ \ \ \ \ \ \ +4 c x^{*}+k x^{*}-2 x^{*2})).
\end{eqnarray*}

It follows from the hypotheses $k \neq \frac{1}{a^2 b+a x^{*} (2 b (c+2 x^{*})+x^{*} (c+4 x^{*}))+x^{*4} (c-b)} (a^2 (b$ $+2 x^{*})^2+a x^{*} (2 b^2 (c+2 x^{*})+b x^{*} (7 c+12 x^{*})+4 x^{*2} (2 c+3 x^{*}))-x^{*4} (b-c) (b+2 c+4 x^{*}))$, $F''(x^{*})\neq0$, and $p'(x^{*})\neq 0$ that
$u_1\neq0, u_2\neq0$ and $u_3 \neq 0$. By the results of Bogdanov \cite{bogdanov1981bifurcation,1975Versal} and Takens~\cite{takens2001forced}, we conclude that system (\ref{A1}) undergoes a Bogdanov-Takens bifurcation of codimension 2 when $(\delta_{1},\delta_{2})$ changes in a small neighborhood of $(0, 0)$.
\end{proof}

\subsection{Bogdanov-Takens bifurcation of codimension 3}

 This subsection focuses on the Bogdanov-Takens bifurcation of codimension 3. In order to facilitate the understanding of the analysis process, we first present the relevant definition and property. Please refer to Perko \cite{perko2001nonlinear} and Li et al. \cite{2021Incoherent} for further details.

\begin{definition}\label{def3.1}
The bifurcation that results from unfolding the following normal form of a cusp of codimension 3,
\begin{equation}\label{b2}
\begin{array}{rrl}
&&\dfrac{dx}{dt}=y,\vspace{2mm}\\
&&\dfrac{dy}{dt}=x^{2}\pm x^{3}y,
\end{array}
\end{equation}
is called a cusp type degenerate Bogdanov-Takens bifurcation of codimension 3.
\end{definition}

\begin{proposition}\label{pro1}
A universal unfolding of the normal form (\ref{b2}) is expressed by
\begin{eqnarray}\label{b1}
\begin{array}{l}
\left\{
\begin{array}{l}
\dfrac{dx}{dt}=y, \vspace{2mm}\\
\dfrac{dy}{dt}=\zeta_{1}+\zeta_{2}y+\zeta_{3}xy+x^{2}\pm x^{3}y+R(x,y,\rho),
\end{array}
\right.
\end{array}
\end{eqnarray}
where $\rho=(\rho_{1}, \rho_{2}, \rho_{3})\sim(0,0,0)$, $\frac{D(\zeta_{1},\zeta_{2},\zeta_{3})}{D(\rho_{1},\rho_{2},\rho_{3})} \neq 0$ for small $\rho$ and
\begin{eqnarray}\label{a13}
\nonumber&&R(x,y,\rho)=y^{2}O(|x,y|^{2})+O(|x,y|^{5})+O(\rho)(O(y^{2})+O(|x,y|^{3}))+O(\rho^{2})O(|x,y|).
\end{eqnarray}
\end{proposition}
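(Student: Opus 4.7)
The plan is to derive (\ref{b1}) as a universal unfolding of the cusp normal form (\ref{b2}) by applying the standard versal-unfolding machinery for planar singularities of codimension three; the argument will parallel the Bogdanov-Takens construction of Theorem \ref{tha2} above, now with a third parameter to account for the extra degeneracy arising from $p'(x^{*}) = 0$. First, I would write the most general smooth perturbation of (\ref{b2}) depending on $\rho = (\rho_1,\rho_2,\rho_3)$, and then successively eliminate non-essential monomials using near-identity changes of coordinates, smooth time rescalings of the form $dt = (1 + h(x,\rho))\,d\tau$, and shifts of the origin, just as was done in the codimension-2 unfolding computation (\ref{a6})--(\ref{a11}).

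The key step is to identify which monomials cannot be removed. Once the linear part has been put into the Jordan form with a single nilpotent block, the only resonant monomials that survive normalization are $1$, $y$, and $xy$ -- contributing the essential unfolding coefficients $\zeta_1, \zeta_2, \zeta_3$ -- together with the quadratic $x^2$ and the cubic $x^3 y$ already present in the normal form (\ref{b2}). The $x^2$ coefficient is normalized to $+1$ by a rescaling $(x,y,t) \mapsto (\lambda x, \mu y, \nu t)$, exactly as at the end of the proof of Theorem \ref{tha1}(III). Every other monomial is either absorbed into lower-order ones or relegated to the remainder $R(x,y,\rho)$, whose structure $y^{2}O(|x,y|^2) + O(|x,y|^5) + O(\rho)(O(y^2)+O(|x,y|^3)) + O(\rho^2)O(|x,y|)$ records precisely the terms that do not affect the local bifurcation diagram at the cusp.

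The transversality condition $\frac{D(\zeta_1,\zeta_2,\zeta_3)}{D(\rho_1,\rho_2,\rho_3)} \neq 0$ is then the statement that the map $\rho \mapsto (\zeta_1(\rho),\zeta_2(\rho),\zeta_3(\rho))$ is a local diffeomorphism at $\rho = 0$, so that (\ref{b1}) genuinely carries three independent unfolding directions. Geometrically, it means that $(\rho_1,\rho_2,\rho_3)$ parametrize a three-dimensional slice transverse to the codimension-3 cusp stratum in the space of smooth planar vector fields; equivalently, the three parameters project onto a complement of the tangent space of the orbit of (\ref{b2}) under the action of coordinate changes and rescalings.

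The main obstacle will be the bookkeeping involved in tracking the interaction between each near-identity transformation and the remainder $R$, and in verifying that no new non-removable monomials of degree at most four in $(x,y)$ or at most two in $\rho$ are introduced along the way. Rather than reproducing every step by hand, I would invoke the framework of Lemontagne et al.\ \cite{2008Bifurcation} already used in Theorem \ref{tha1}(III) for the cubic reduction, together with the general versal-unfolding results of Perko \cite{perko2001nonlinear} and the explicit cusp-of-codimension-3 unfolding calculation in Li et al.\ \cite{2021Incoherent}, which together cover each reduction and confirm the form of $R(x,y,\rho)$ claimed in (\ref{b1}).
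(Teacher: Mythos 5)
The paper offers no proof of this proposition at all: it is presented as a quoted standard fact, with the reader referred to Perko \cite{perko2001nonlinear} and Li et al.\ \cite{2021Incoherent}, so there is no in-paper argument to compare yours against line by line. Your outline of the formal side --- successive near-identity coordinate changes, time rescalings and shifts that kill all non-resonant monomials and leave $1$, $y$, $xy$, $x^{2}$, $x^{3}y$ plus a remainder of the form (\ref{a13}), followed by the transversality condition on $\rho\mapsto(\zeta_{1},\zeta_{2},\zeta_{3})$ --- is the correct and standard first half of that cited argument, and it is exactly the computation the paper later carries out concretely in Appendix C for its specific system.

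The genuine gap is in the second half, which you dispose of in one clause when you say the remainder $R$ ``records precisely the terms that do not affect the local bifurcation diagram.'' That assertion is the entire content of universality, and it does not follow from monomial elimination plus the nondegeneracy of $\frac{D(\zeta_{1},\zeta_{2},\zeta_{3})}{D(\rho_{1},\rho_{2},\rho_{3})}$. The normal-form reduction only shows that every generic three-parameter perturbation of (\ref{b2}) can be \emph{written} in the form (\ref{b1}); to conclude that (\ref{b1}) is a \emph{universal} unfolding one must prove that the family is (fibre-$C^{0}$) versal, i.e.\ that the terms collected in $R$ do not alter the topology of the bifurcation set --- the saddle-node, Hopf, homoclinic and double-limit-cycle surfaces and the number of limit cycles in each stratum. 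That is a global, non-formal statement (it is the Dumortier--Roussarie--Sotomayor theorem on the cusp of codimension $3$), and it cannot be obtained by the bookkeeping of near-identity transformations you describe; in particular, resonant terms of the form $y^{2}O(|x,y|^{2})$ are not removable by normal-form changes, so their harmlessness has to be argued dynamically, not algebraically. Since you ultimately invoke the literature for this step --- as does the paper --- the proposal is acceptable as a sketch, but you should be explicit that the references are being used for the versality theorem itself and not merely to ``confirm the form of $R$.''
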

Our main result is described in the following Theorem.

\begin{theorem}\label{tha33}
 Let $F$ be as in \eqref{a00} and let $p$ be as in \eqref{a0}, and define $b_{\pm}$ as in~\eqref{a22}.
If $F(x^{*})=F'(x^{*})=p(x^{*})=p'(x^{*})= 0, F''(x^{*}) \neq 0$ and $b\neq b_{\pm}$, then $E^{*}=(x^{*}, nx^{*})$ is a cusp of codimension 3. Further, if we choose $n, b$ and $m$ as bifurcation parameters and assume that $b \neq b^{*}_{\pm}$, where
\begin{eqnarray}\label{100}
b^{*}_{\pm}=\frac{x^{*} (-m\pm \sqrt{n} (a+x^{*} (c+x^{*})))}{2 l}
\end{eqnarray}
with $l=2 a^3+a^2  (7 c+6 x^{*})+a x^{*} (9 c^2+28 c x^{*}+30 x^{*2})-x^{*4} (c^2+3 c x^{*}+6 x^{*2}),
m=a^3+6 a^2 c x^{*}-a^2 x^{*2}+13 a c^2 x^{*2}+40 a c x^{*3}+51 a x^{*4}-3 c^2 x^{*4}-6 c x^{*5}-11 x^{*6},
n=25 a^4+2 a^3 x^{*} (43 c+46 x^{*})+a^2 x^{*2} (97 c^2+274 c x^{*}+286 x^{*2})-14 a x^{*4} (5 c^2+17 c x^{*}+22 x^{*2})+x^{*6} (9 c^2+22 c x^{*}+49 x^{*2})$, then model (\ref{A1}) undergoes a Bogdanov-Takens bifurcation of codimension 3 at $E^{*}$.
\end{theorem}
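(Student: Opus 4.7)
The first part of the statement, namely that $E^{*}=(x^{*},nx^{*})$ is a cusp of codimension $3$ under the given hypotheses, is already covered by case $(III)$ of Theorem~\ref{tha1}, so the real content here is to verify that the three-parameter family obtained by treating $n,b,m$ as unfolding parameters realises a versal unfolding in the sense of Proposition~\ref{pro1}. The plan is therefore to mirror the strategy used in the codimension-$2$ case (Theorem~\ref{tha2}) but carried out to one higher order and in one extra parameter.

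First, I would write the perturbed system by replacing $m\mapsto m+\delta_{1}$, $n\mapsto n+\delta_{2}$, $b\mapsto b+\delta_{3}$, keeping $r,s,k,q,a,c$ fixed at the values forced by $F(x^{*})=F'(x^{*})=p(x^{*})=p'(x^{*})=0$ (as in \eqref{1000}). After translating the equilibrium $(x^{*},nx^{*})$ to the origin and Taylor-expanding to order at least $4$ in $(x,y)$ and order $2$ in $\delta=(\delta_{1},\delta_{2},\delta_{3})$, I would follow exactly the same chain of reductions as in Theorem~\ref{tha33}'s codimension-three cusp analysis in Theorem~\ref{tha1}(III): set $Y=dx/dt$ to put the system into generalised Li\'{e}nard form, remove the $y^{2}$ coefficient by the time rescaling $dt=(1-\hat c_{02}x)d\tau$ together with $Y\mapsto(1-\hat c_{02}x)Y$, and then apply near-identity changes of coordinates of the form $x\mapsto x+\alpha_{2}(\delta)+\alpha_{3}(\delta)x^{2}+\cdots$, $y\mapsto y+\cdots$, together with time and scaling rescalings, to eliminate in turn the $x^{3}$ coefficient (which is possible because $F''(x^{*})\neq 0$ guarantees a nonzero quadratic coefficient) and to normalise the coefficients of $x^{2}$ and $x^{3}y$ to $1$ and $\pm 1$, respectively. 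This brings the system into the form
\begin{equation*}
\dot x=y,\qquad \dot y=\zeta_{1}(\delta)+\zeta_{2}(\delta)\,y+\zeta_{3}(\delta)\,xy+x^{2}\pm x^{3}y+R(x,y,\delta),
\end{equation*}
with $R$ satisfying the tail bound in Proposition~\ref{pro1}.

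The decisive step is then to verify the transversality condition
\begin{equation*}
\left.\frac{D(\zeta_{1},\zeta_{2},\zeta_{3})}{D(\delta_{1},\delta_{2},\delta_{3})}\right|_{\delta=0}\neq 0,
\end{equation*}
which guarantees that the map $\delta\mapsto\zeta(\delta)$ is a local diffeomorphism near $0$ and hence that the perturbation realises the universal unfolding. A computer algebra computation (as in the codimension-$2$ case) yields a rational function whose numerator factorises into pieces depending on $b,x^{*},a,c$; the nondegeneracy $b\neq b^{*}_{\pm}$ defined in \eqref{100} is precisely the condition that prevents the numerator from vanishing (the roots $b^{*}_{\pm}$ are obtained from the quadratic in $b$ appearing in that numerator, which is why they arise as a $\pm$-pair). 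Combined with the already-assumed $b\neq b_{\pm}$ from \eqref{a22} (which ensures the coefficient of $x^{3}y$ in the normal form is nonzero so that the codimension-$3$ cusp is genuine), this gives the invertibility of the Jacobian and, by Proposition~\ref{pro1}, yields the codimension-$3$ Bogdanov--Takens bifurcation.

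The main obstacle is not conceptual but computational: the normal form reduction has to be pushed to order $4$ in $(x,y)$ while tracking $\delta$-dependence, and the resulting symbolic expression for the $3\times 3$ Jacobian is very large. I would handle this by performing the reductions in a fixed order so that each new transformation only affects a controlled range of coefficients, and by letting the computer algebra system extract only the $(\zeta_{1},\zeta_{2},\zeta_{3})$-coefficients linear in $\delta$ at $\delta=0$; the determinant then factors into the three quantities analogous to $u_{1},u_{2},u_{3}$ of Theorem~\ref{tha2}, plus a new factor of the form $\,(\text{quadratic in }b)\,$ whose zeros are exactly $b^{*}_{\pm}$.
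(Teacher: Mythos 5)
Your proposal follows essentially the same route as the paper's proof in Appendix~\ref{AppC}: perturb $m$, $b$, $n$, reduce to generalized Li\'enard form via $Y=dx/dt$, apply a chain of near-identity transformations and rescalings to reach the universal unfolding of Proposition~\ref{pro1}, and verify $\left|\partial(\bar{\psi}_1,\bar{\psi}_2,\bar{\psi}_3)/\partial(\kappa_1,\kappa_2,\kappa_3)\right|_{\kappa=0}\neq 0$, whose numerator indeed factors into the cusp nondegeneracy factor (controlled by $b\neq b_{\pm}$), a factor controlled by $m>0$, and a new quadratic in $b$ whose roots are exactly $b^{*}_{\pm}$. The only cosmetic difference is that the paper removes the $y^{2}$, $xy^{2}$ and $x^{2}y^{2}$ terms by explicit polynomial coordinate changes rather than the time-rescaling $dt=(1-\bar{c}_{02}x)d\tau$ you borrow from the codimension-two case, but both are standard and equivalent to the required order.
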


\begin{proof}
See \textbf{Appendix \ref{AppC}}.
\end{proof}

\subsection{Hopf bifurcation of codimension 3}

Throughout this section we will denote $E_{*}=(x_{*},nx_{*})$ to either $E_1=(x_{1},y_{1})$ or $E_3=(x_{3},y_{3})$ for convenience.
According to the analysis in \textbf{Appendix \ref{AppA}}, we can conclude that a Hopf bifurcation may occur
at either $E_*$  since $\det(J(E_{*}))=-sF'(x_{*})>0$.  Our conclusions are presented as follows.

\begin{theorem}\label{tha3}
Let $E_{*}=(x_{*},nx_{*})$ be an equilibrium of (\ref{A1}) accounting for either $E_1=(x_{1},y_{1})$ or $E_3=(x_{3},y_{3})$. Assume that $F(x_{*})=p(x_{*})=0$ and $F'(x_{*})<0$. 
%
%
 %
 %
Finally, define
\begin{eqnarray}\label{A33}
\eta_{11}=-(r^2 x^{3}_{*} \eta^{0}_{11}+k r x_{*}s \eta^{1}_{11}+k^2 s^{2} \eta^{2}_{11})
\end{eqnarray}
where $\eta^{0}_{11}, \eta^{1}_{11}, \eta^{2}_{11}$ are specified in \textbf{Appendix \ref{AppD}}. Then we have:\\
$(I)$ if $\eta_{11}<0$, then $E_{*}$ is a stable weak focus with multiplicity one and one stable limit cycle bifurcates from $E_{*}$ in a supercritical Hopf bifurcation;\\
$(II)$ if $\eta_{11}>0$, then $E_{*}$ is an unstable weak focus with multiplicity one and one unstable limit cycle bifurcates from $E_{*}$ in a subcritical Hopf bifurcation;\\
$(III)$ if $\eta_{11}=0$, then $E_{*}$ is a weak focus with multiplicity at least two and system (\ref{A1}) may exhibit a degenerate Hopf bifurcation.
\end{theorem}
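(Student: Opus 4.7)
The plan is to carry out a standard first-focal-value (Lyapunov coefficient) computation at the equilibrium $E_{*}$, exploiting the hypotheses to ensure that the linear part is a center. First, I would observe that the assumption $p(x_{*})=0$ forces $\mathrm{tr}(J(E_{*}))=0$, while $F'(x_{*})<0$ together with formula~\eqref{a0} gives $\det(J(E_{*}))=-sF'(x_{*})>0$. Hence the eigenvalues of $J(E_{*})$ are a pair of purely imaginary numbers $\pm i\omega$ with $\omega=\sqrt{-sF'(x_{*})}>0$, so $E_{*}$ is a linear center and a Hopf scenario is in force.

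Next I would translate $E_{*}$ to the origin via $X=x-x_{*}$, $Y=y-nx_{*}$ and Taylor-expand the resulting vector field up to degree three:
\begin{equation*}
\begin{array}{rcl}
\dot X&=&\alpha_{10}X+\alpha_{01}Y+\sum_{i+j=2}\alpha_{ij}X^{i}Y^{j}+\sum_{i+j=3}\alpha_{ij}X^{i}Y^{j}+O(|X,Y|^{4}),\\[1mm]
\dot Y&=&\beta_{10}X+\beta_{01}Y+\sum_{i+j=2}\beta_{ij}X^{i}Y^{j}+\sum_{i+j=3}\beta_{ij}X^{i}Y^{j}+O(|X,Y|^{4}),
\end{array}
\end{equation*}
where the linear coefficients are the entries of $J(E_{*})$. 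I would then perform a linear change of coordinates that puts the linear part into the canonical Jordan form $\bigl(\begin{smallmatrix}0&-\omega\\ \omega&0\end{smallmatrix}\bigr)$; a convenient choice is $X=u$, $Y=\tfrac{\alpha_{10}}{-\alpha_{01}}u-\tfrac{\omega}{\alpha_{01}}v$, which uses $\alpha_{01}=-\tfrac{qx_{*}}{a+x_{*}(c+x_{*})}\neq 0$. After this substitution the system becomes
\begin{equation*}
\begin{array}{rcl}
\dot u&=&-\omega v+f(u,v),\\[1mm]
\dot v&=&\omega u+g(u,v),
\end{array}
\end{equation*}
with $f,g$ starting at quadratic order and their coefficients explicit rational functions of $(r,s,k,q,a,b,c,n,x_{*})$.

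Then I would apply the standard Bautin/Guckenheimer--Holmes first Lyapunov coefficient formula
\begin{equation*}
\ell_{1}=\frac{1}{16}\bigl(f_{uuu}+f_{uvv}+g_{uuv}+g_{vvv}\bigr)+\frac{1}{16\omega}\bigl(f_{uv}(f_{uu}+f_{vv})-g_{uv}(g_{uu}+g_{vv})-f_{uu}g_{uu}+f_{vv}g_{vv}\bigr),
\end{equation*}
evaluated at the origin. After clearing denominators and using the relations $p(x_{*})=0$ and $\omega^{2}=-sF'(x_{*})$, this expression collapses to a polynomial in $x_{*}, r, s, k$ whose coefficients depend on the remaining parameters; organising it as a polynomial in $(r,s)$ yields the three-term structure
\[
\eta_{11}=-\bigl(r^{2}x_{*}^{3}\,\eta^{0}_{11}+krx_{*}s\,\eta^{1}_{11}+k^{2}s^{2}\,\eta^{2}_{11}\bigr),
\]
exactly as in~\eqref{A33}. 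The sign of $\ell_{1}$ (hence of $\eta_{11}$) then determines the three cases: $\eta_{11}<0$ gives a stable weak focus and a supercritical Hopf bifurcation with a unique stable limit cycle; $\eta_{11}>0$ gives an unstable weak focus and a subcritical Hopf bifurcation with a unique unstable limit cycle; and $\eta_{11}=0$ indicates a weak focus of multiplicity at least two, opening the door to a degenerate Hopf bifurcation that is analysed separately via higher focal values.

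The main obstacle will be the symbolic bookkeeping: extracting a compact three-term form $\eta_{11}=-(r^{2}x_{*}^{3}\eta^{0}_{11}+krx_{*}s\eta^{1}_{11}+k^{2}s^{2}\eta^{2}_{11})$ from the raw Bautin formula requires careful use of the constraint $p(x_{*})=0$ (which expresses one of $r,s,k,m,b$ in terms of the others) to eliminate redundant monomials, and then a clean factorisation of the remaining expression. I would carry this out in a computer algebra system, deferring the explicit formulas for $\eta^{0}_{11},\eta^{1}_{11},\eta^{2}_{11}$ to Appendix~\ref{AppD}, and check consistency by verifying numerically at a sample point that the sign of $\eta_{11}$ matches the direction of bifurcating limit cycles detected by continuation.
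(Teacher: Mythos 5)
Your proposal is correct and follows essentially the same route as the paper: use $p(x_{*})=0$ and $F'(x_{*})<0$ to get a purely imaginary linear part with $\omega^{2}=\det J(E_{*})=-sF'(x_{*})$, translate to the origin, bring the linear part to canonical form, and evaluate the first Lyapunov coefficient symbolically (the paper cites Perko's formula after an additional time rescaling by $1/\omega$, which is equivalent to the Guckenheimer--Holmes version you quote). The only cosmetic difference is the normalization of the linear change of variables; the sign analysis of $\eta_{11}$ and the three resulting cases are identical.
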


\begin{proof}
According to~\eqref{a0}, we obtain that $\det(J(E_{*}))={-sF'(x_{*})}>0$. Moreover, $F(x_{*})=p(x_{*})=0$ implies $m=-\frac{(b+x_{*})^2}{k x_{*} (-a+b (c+2 x_{*})+x_{*}^2)}$ $\times(a (k s+r x_{*})+x_{*} (c k (s-r)+2 c r x_{*}+k x_{*} (s-2 r)+3 r x_{*}^2))$ and $n=\frac{(a+x_{*} (c+x_{*}))^2}{k q x_{*}^2 (-a+b (c+2 x_{*})+x_{*}^2)} (b (k s+r x_{*})+x_{*} (k (s-r)+2 r x_{*}))$ and, hence,  $\mathrm{tr}(J(E_{*}))=0$.


Performing the coordinate transformation $X=x-x_{*}, Y=y-nx_{*}$ to shift the positive equilibria $E_{*}$ to the origin and expressing the resulting system around the origin
by Taylor expansion,  we have (for convenience, we rename $X,Y$ as $x,y$, respectively):
\begin{equation}\label{a30}
\begin{array}{rcl}
\dfrac{dx}{dt}&=&\alpha_{10}x+\alpha_{01}y+\alpha_{20}x^{2}+\alpha_{11}xy+\alpha_{30}x^{3}+\alpha_{21}x^{2}y
+\alpha_{40}x^{4}+\alpha_{31}x^{3}y\\
&&+o(|x,y|^{4}),\\
\dfrac{dy}{dt}&=&\beta_{10}x-\alpha_{10}y+\beta_{20}x^{2}+\beta_{11}xy+\beta_{02}y^{2}+\beta_{30}x^{3}
+\beta_{21}x^{2}y+\beta_{12}xy^{2}+\beta_{40}x^{4}\\
&& +\beta_{31}x^{3}y+\beta_{22}x^{2}y^{2}+o(|x,y|^{4}),
\end{array}
\end{equation}
where $\alpha_{ij}$ and $\beta_{ij}$ are specified in \textbf{Appendix \ref{AppD}}.

In view of $-\alpha_{10}^2-\alpha_{01}\beta_{10}=\det(J(E_{*}))>0$, we define $\omega=\sqrt{-\alpha_{10}^{2}-\alpha_{01}\beta_{10}}$. Through the scale transformations $x=-\alpha_{01}X, y=\alpha_{10}X-\omega Y$ and $dt=\frac{1}{\omega}d\tau$, system (\ref{a30}) is   represented by (we still denote $X,Y,\tau$ by $x,y,t$, respectively):
\begin{eqnarray}
\nonumber&&\frac{dx}{dt}=y+\delta_{20}x^{2}+\delta_{11}xy+\delta_{30}x^{3}+\delta_{21}x^{2}y+\delta_{40}x^{4}+\delta_{31}x^{3}y+o(|x,y|^{4}),\\
\nonumber&&\frac{dy}{dt}=-x+\gamma_{20}x^{2}+\gamma_{11}xy+\gamma_{02}y^{2}+\gamma_{30}x^{3}+\gamma_{21}x^{2}y+\gamma_{12}xy^{2}+\gamma_{40}x^{4}\\
&&\nonumber \ \ \ \ \ \ \ \ +\gamma_{31}x^{3}y+\gamma_{22}x^{2}y^{2}+o(|x,y|^{4}),
\end{eqnarray}
where $\delta_{ij}$ and $\gamma_{ij}$ are given in \textbf{Appendix \ref{AppD}}.

By the formula in Perko \cite{perko2001nonlinear}, we   obtain the first Lyapunov coefficient as follows:
\begin{eqnarray*}
\eta_{1}=\frac{q^2 s\ \eta _{11}}{8 \omega ^3 k^2 x_{*}  (b+x_{*})^2 (a+c x_{*}+x_{*}^2)^4 (-a+b c+2 b x_{*}+x_{*}^2)^2},
\end{eqnarray*}
where $\eta_{11}$ is defined in (\ref{A33}).
Since all the parameters and $\omega$ are greater than zero, the sign of $\eta_{1}$ is the same as that of $\eta_{11}$ and thus we complete the proof.
\end{proof}

Based on case $(III)$ of Theorem \ref{tha3}, we further analyze the higher codimension that can be reached in a Hopf bifurcation at $E_{*}$. Under the condition $\eta_{11}=0$, we can calculate the second Lyapunov coefficient by Maple and Mathematica as follows:
\begin{eqnarray*}
\eta_{2}=\frac{q^4 s^2 \ \eta _{22}}{288 k^5 x_{*}^3 \omega ^7 (b+x_{*})^4 (a+c x_{*}+x_{*}^2)^8 A},
\end{eqnarray*}
where $A=(-a+b c+2 b x_{*}+x_{*}^2)^5 (b k s+b r x_{*}-k r x_{*}+k s x_{*}+2 r x_{*}^2)$ and $\eta _{22}$ is too long to be included here. In particular $n>0$ implies $A>0$. Specifically, we come to the following conclusion.

\begin{theorem}\label{tha4}
Assume that $E_{*}=(x_{*},nx_{*})$ is an equilibrium of (\ref{A1}) accounting for either $E_1=(x_{1},y_{1})$ or $E_3=(x_{3},y_{3})$. Suppose that $F(x_{*})=p(x_{*})=0, F'(x_{*})<0$ and $\eta_{11}=0$ as defined in (\ref{A33}). Then the following statements hold:\\
$(I)$ If $\eta_{22}<0$, then $E_{*}$ is a stable weak focus with multiplicity 2. System (\ref{A1}) undergoes a degenerate Hopf bifurcation of codimension 2 and there can be up to two limit cycles bifurcating from $E_{*}$, the outermost being stable;\\
$(II)$ If $\eta_{22}>0$,  then $E_{*}$ is an unstable weak focus with multiplicity 2. System (\ref{A1}) undergoes a degenerate Hopf bifurcation of codimension 2 and there can be up to two limit cycles bifurcating from $E_{*}$, the outermost being unstable;\\
$(III)$ If $\eta_{22}=0$, then $E_{*}$ is a weak focus with multiplicity at least 3 and system (\ref{A1}) may undergo a degenerate Hopf bifurcation of codimension at least 3.
\end{theorem}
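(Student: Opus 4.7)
The proof proceeds in direct parallel to that of Theorem \ref{tha3}, but pushes the normal form reduction one order higher. The plan is to start from the already-normalized system
\begin{equation*}
\begin{array}{rcl}
\dfrac{dx}{dt}&=&y+\delta_{20}x^{2}+\delta_{11}xy+\delta_{30}x^{3}+\delta_{21}x^{2}y+\delta_{40}x^{4}+\delta_{31}x^{3}y+o(|x,y|^{4}),\\
\dfrac{dy}{dt}&=&-x+\gamma_{20}x^{2}+\gamma_{11}xy+\gamma_{02}y^{2}+\gamma_{30}x^{3}+\gamma_{21}x^{2}y+\gamma_{12}xy^{2}+\cdots,
\end{array}
\end{equation*}
produced in Theorem \ref{tha3}, now retaining terms up to order five. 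Under the additional hypothesis $\eta_{11}=0$, the first Lyapunov coefficient $\eta_{1}$ vanishes, so the classical Hopf test is inconclusive and the focal quantity of next order must be computed.

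The first step is to perform the Taylor expansion of system~(\ref{A1}) around $E_{*}$ up to and including the fifth-order monomials $x^{i}y^{j}$ with $i+j=5$, substituting the values of $m$ and $n$ fixed by $F(x_{*})=p(x_{*})=0$. Next, I would apply exactly the same linear change of coordinates and time rescaling used in the proof of Theorem \ref{tha3} to bring the linear part into the rotation form $\big(\begin{smallmatrix}0&1\\-1&0\end{smallmatrix}\big)$. Then I would apply the standard closed-form expression for the second Lyapunov coefficient of a two-dimensional planar system (see, e.g., Perko~\cite{perko2001nonlinear} or the Kuznetsov formulae), which is a polynomial in the coefficients $\delta_{ij},\gamma_{ij}$ of the normalized system up to order five. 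After simplification using $\eta_{11}=0$ (which is needed to make the standard expression genuinely reduce to the second focal value), one obtains the quantity $\eta_{2}$ displayed in the statement, with the factorization $\eta_{2}=\frac{q^{4}s^{2}\,\eta_{22}}{288\,k^{5}x_{*}^{3}\omega^{7}(b+x_{*})^{4}(a+cx_{*}+x_{*}^{2})^{8}\,A}$. Since $n>0$ gives $A>0$ and every other factor in the prefactor is manifestly positive, the sign of $\eta_{2}$ coincides with that of $\eta_{22}$.

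The conclusions then follow from standard results on degenerate Hopf bifurcations. If $\eta_{22}<0$ (resp.\ $>0$), then $\eta_{2}\neq 0$ and $E_{*}$ is a weak focus of multiplicity exactly two, stable (resp.\ unstable) according to the sign of $\eta_{2}$; a generic two-parameter unfolding (which is available since the coefficients $m$, $n$, and for instance $b$ can be varied independently in~\eqref{A1}) then produces at most two limit cycles bifurcating from $E_{*}$, the outermost one inheriting the stability of the weak focus. This proves parts~$(I)$ and~$(II)$. Part~$(III)$ is immediate: if $\eta_{22}=0$ as well, then both $\eta_{1}$ and $\eta_{2}$ vanish, so $E_{*}$ is a weak focus of multiplicity at least three and the relevant bifurcation, if it is realizable by varying the parameters, is of codimension at least three.

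The main obstacle is purely computational: the expansion yielding $\eta_{22}$ is an enormous polynomial in the eight parameters and in $x_{*}$ --- hence the authors' remark that ``$\eta_{22}$ is too long to be included here.'' In practice this step has to be carried out with a computer algebra system (Maple/Mathematica), using the constraints $F(x_{*})=p(x_{*})=\eta_{11}=0$ to eliminate $m$, $n$ and one further parameter symbolically before simplification. The conceptual content beyond this algebra is the standard bifurcation-theoretic fact that a weak focus of multiplicity $k$ of a planar analytic vector field perturbs into at most $k$ small-amplitude limit cycles, with the outermost cycle stable if and only if the first non-vanishing Lyapunov coefficient is negative, which is exactly what gives the enumerative statement on the number and stability of limit cycles in parts~$(I)$--$(III)$.
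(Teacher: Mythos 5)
Your proposal matches the paper's treatment: the authors likewise extend the normal-form reduction from the proof of Theorem~\ref{tha3}, compute the second Lyapunov coefficient symbolically with Maple/Mathematica under the constraint $\eta_{11}=0$, observe that the prefactor of $\eta_{22}$ in $\eta_{2}$ is positive (using $n>0 \Rightarrow A>0$), and invoke the standard correspondence between the multiplicity of the weak focus and the number and stability of bifurcating limit cycles. The approach and all key steps are essentially identical.
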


\section{Numerical bifurcation analysis}

In this section, we will carry out numerical bifurcation analysis with AUTO07P~\cite{doedel2007auto} on system \eqref{A1}. Due to the exploratory nature of this part, parameter values are initially chosen  to verify and complement the theoretical results.

\subsection{Mushroom and isola dynamics of limit cycles}

\textbf{Figure \ref{bx}} shows $b$ as the primary bifurcation parameter.
The black curve corresponds to an equilibrium branch which contains two supercritical Hopf bifurcation points labelled as $HB_{1}$ and $HB_{2}$, respectively; in particular, the point $HB_{1}$ is located very close to a fold or saddle-node point (not labelled). The equilibrium curve terminates at a transcritical bifurcation $TC$ at which this positive steady state collides with the equilibrium at the origin.
The branches of limit cycles bifurcated from the two Hopf points form a single (green) curve with a ``mushroom''-like shape. The presence of this curve is usually referred to as a {\it mushroom bifurcation}~\cite{2021Incoherent, otero2023automated}.
This curve exhibits multiple saddle-node bifurcation points of limit cycles (labelled as $SN_i$, $i=1,2,3,4$). There is a narrow interval of values of $b$ between the points $SN_1$ and $SN_2$ for which three concentric limit cycles coexist: A large amplitude stable limit cycle surrounding a
 middle-sized unstable limit cycle that encloses a smaller stable limit cycle, all surrounding an unstable focus. This implies the possibility of oscillatory-type multistability for both populations; the basin boundary between each asymptotic scenario is the unstable periodic orbit. A qualitatively similar dynamic structure occurs for values of $b$ between $SN_4$ and $HB_2$.

\begin{figure}[h!]
\begin{center}
\includegraphics[width=\textwidth]{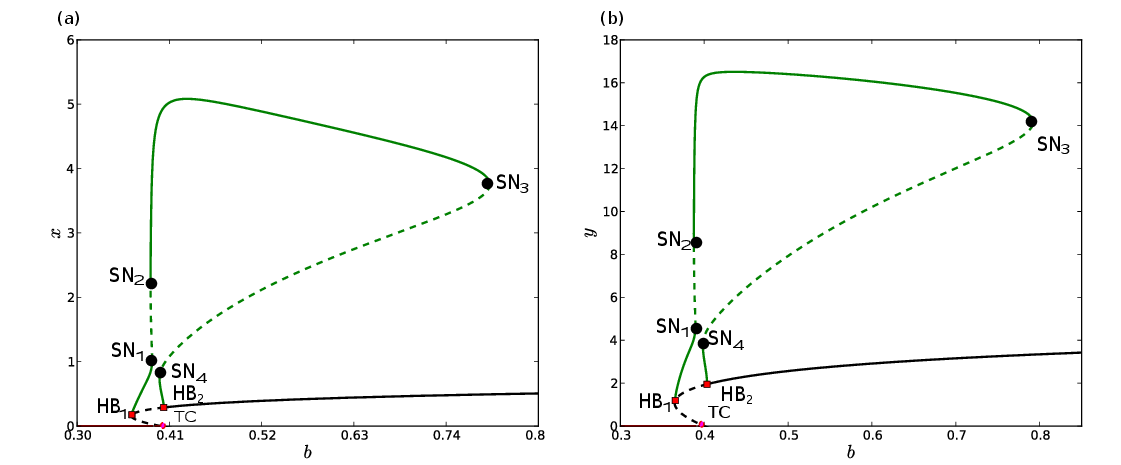}
\end{center}
\caption{ \ One-parameter bifurcation diagram of system (\ref{A1}) with respect to $b$.  Panel (a) shows $b$ vs $x$ and panel~(b) shows $b$ vs $y$. Parameter values are $r=0.5, \ k=6.79211, \ q=0.3, \ a=3, \ s=0.1096,
 \ n=6.752, \ m=0.2, \ c=0.001$.}
\label{bx}
\end{figure}

The existence of these mushroom-like structures usually heralds the  appearance of {\it isolas}~\cite{avitabile2012numerical,gray1985sustained}. Indeed,
under suitable parameter perturbations, the fold points $SN_1$ and $SN_4$ can become progressively closer to one another until the thin ``neck" of the mushroom is cut off effectively creating a closed curve of limit cycle with two folds. {\bf Figure}~\ref{b_vs_m_isolas} shows a sample of such nested structures known as isolas for different values of parameter $m$ within the range $0.20349<m<0.228811$ (The smallest isola in this set corresponds to $m=0.228811$.) The folds at each closed curve correspond to the saddle-node points of limit cycle $SN_2$ and $SN_3$ from {\bf Figure}~\ref{bx}. For each of these isolas, there is a critical value $b$ for which the possible amplitudes of the asymptotic oscillations of both populations reach their maxima. On the other hand, the  smaller $m$ is (i.e., progressively ``weaker'' Allee effect), the larger the possible amplitudes of the stable periodic solutions.

\begin{figure}[h!]
\begin{center}
\includegraphics[width=\textwidth]{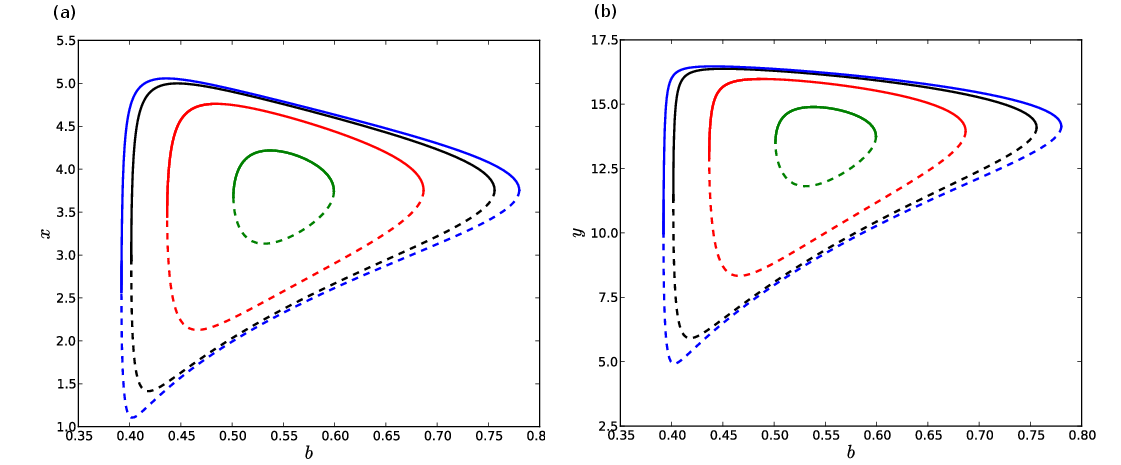}
\end{center}
\caption{ \ A nested of isolas of system (\ref{A1}) with respect to $b$ for $m=2.01387\times10^{-1}$, $m=2.04370\times10^{-1}$, $m=2.14270\times10^{-1}$, and $m=2.26336\times10^{-1}$. (a) $b$ vs. $x$; (b) $b$ vs. $y$. The other parameters are fixed to the same values of \textbf{Figure \ref{bx}}.}
\label{b_vs_m_isolas}
\end{figure}

Considering $b$ and $m$ as the primary bifurcation parameters and keeping the other parameters fixed, we obtain the bifurcation diagram of \textbf{Figure \ref{b_vs_m_all}}(a). It contains a Hopf bifurcation curve H (red), a saddle-node bifurcation curve SN (blue), a homoclinic bifurcation curve Hom (green), and a saddle-node bifurcation curve of limit cycles SNL (black). There are also a Bogdanov-Takens bifurcation point BT, a generalized Hopf point GH and a cusp point of limit cycles CPL. The grey line $m=br$ marks the boundary between  strong (left handside) and weak (right handside) Allee effect on the prey in the absence of predators.

\begin{figure}[h!]
\begin{center}
\includegraphics[width=\textwidth]{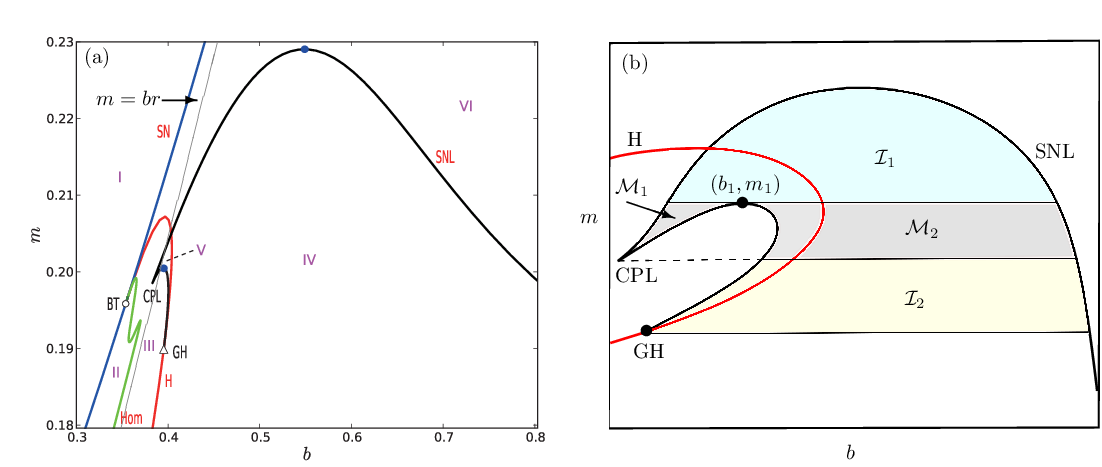}
\end{center}
\caption{ \ (a) Two-parameter bifurcation diagram of (\ref{A1}) with respect to $b$ and $m$. Here SN, H, Hom and SNL denote, respectively, saddle-node bifurcation curve, Hopf bifurcation curve, homoclinic bifurcation curve and saddle-node bifurcation curve of limit cycle. The other parameters are fixed to the same values of \textbf{Figure~\ref{bx}}. Panel (b) is a topological sketch showing the regions for mushroom and isola dynamics.}
\label{b_vs_m_all}
\end{figure}

The entire bifurcation diagram in \textbf{Figure \ref{b_vs_m_all}}(a) is divided into six regions labeled as I to VI. For each region we present representative phase portraits in~\textbf{Figure \ref{bm_phase}}:
Region~I consists of a saddle and an unstable node along the $x$ axis -- as well as the origin -- and no equilibria in the interior of the first quadrant.
As the point $(b,m)$ crosses from Region I to Region~II a saddle-node bifurcation occurs and two new equilibria appear with positive coordinates.
In Region~III a large amplitude stable limit cycle exists -- born at the homoclinic bifurcation along the curve $Hom$ as the point $(b,m)$ enters from Region II -- that encloses an unstable focus.
Passing from Region III to IV involves a subcritical Hopf bifurcation: a small unstable limit cycle appears in IV which is surrounded by the larger (stable) one.
Region V is bounded by the (red) branch of supercritical Hopf bifurcation and the segment of the (black) saddle-node curve of limit cycles which contains the cusp singularity ($CPL$) and emerges from the $GH$ point:  This is the region with the coexistence of three limit cycles and oscillatory multistability described before in {\bf Figure}~\ref{bx}.
Finally, in Region VI, the two larger limit cycles have collided and disappeared as the point $(b,m)$ crosses the $SNL$ curve from Region V.

\textbf{Figure \ref{b_vs_m_all}} is interesting in that the regions for mushroom and isola dynamics are specified in a two-parameter context in panel (b). (Notice also that most of these regions lie in the weak Allee area.) The (gray) disconnected region $\mathcal{M}=\mathcal{M}_1\cup\mathcal{M}_2$ corresponds to mushroom dynamics. (Notice that the one-parameter bifurcation diagrams in {\bf Figure}~\ref{bx} correspond to a slice of $\mathcal{M}$ for $m=0.2$ fixed.)
$\mathcal{M}$ is formed by two open connected components $\mathcal{M}_1$ and $\mathcal{M}_2$, one in Region IV and the other in Region V. Its specification is as follows: Assume that the $SNL$ curve is defined by $\varphi(b,m)=0$, where $\varphi$ is a smooth function on $(b,m)$ such that $\nabla\varphi=0$ at the point CPL. Without loss of generality, assume that regions IV and V lie in the open region $\varphi(b,m)>0$. Then, the open connected subset $\mathcal M_1$  is defined as $\mathcal{M}_1=\{(b,m) \in {\rm V}: m<m_1\} \subset {\rm V}$, where the value $m_1$ satisfies $\varphi(b_1,m_1)=0$ and $\frac{\partial \varphi}{\partial m}(b_{1},m_{1})=0$, indicating that the curve SNL has a fold with respect to $m$ at $(b_{1},m_{1})$. Furthermore, the set $\mathcal{M}_2=\{(b,m) \in {\rm IV}: m_c<m<m_1\} \subset {\rm IV}$, where $m_c$ corresponds to the value of $m$ at the cusp point ${\rm CPL}=(b_c,m_c)$.
On the other hand, Region $\mathcal{I}=\mathcal{I}_1\cup\mathcal{I}_2$ gives rise to isolas of limit cycles. Region $\mathcal{I}$ is a disconnected set composed of two connected components in \textbf{Figure \ref{b_vs_m_all}}. Here $\mathcal{I}_1=\{(b,m) \in {\rm IV}: m_1<m\} $ and $\mathcal{I}_2=\{(b,m) \in {\rm IV}: m_{GH}<m<m_1\} $, where  $m_{GH}$ corresponds to the value of $m$ at the point ${\rm GH}=(b_{GH},m_{GH})$. In particular, the isolas shown in {\bf Figure}~\ref{b_vs_m_isolas} are found for values of $(b,m)\in\mathcal{I}_1$.

\begin{figure}[h!]
\begin{center}
\includegraphics[width=\textwidth]{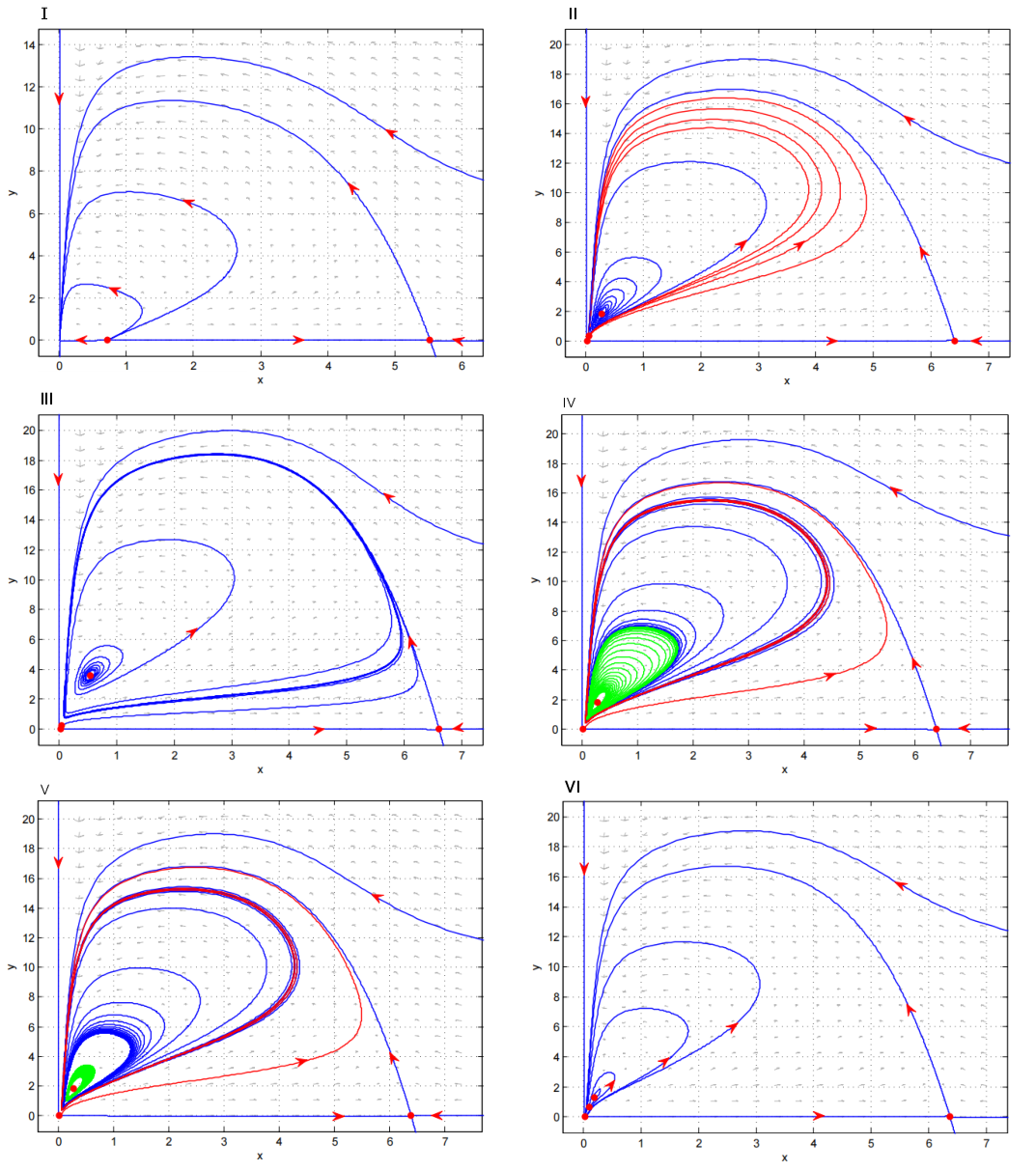}
\end{center}
\caption{ \ Phase portraits of regions for $(b,m)$ in regions I-VI. The other parameters are fixed to the same values of \textbf{Figure \ref{bx}}.}
\label{bm_phase}
\end{figure}

\subsection{Further  isolas}

\begin{figure}[h!]
\begin{center}
\includegraphics[width=\textwidth]{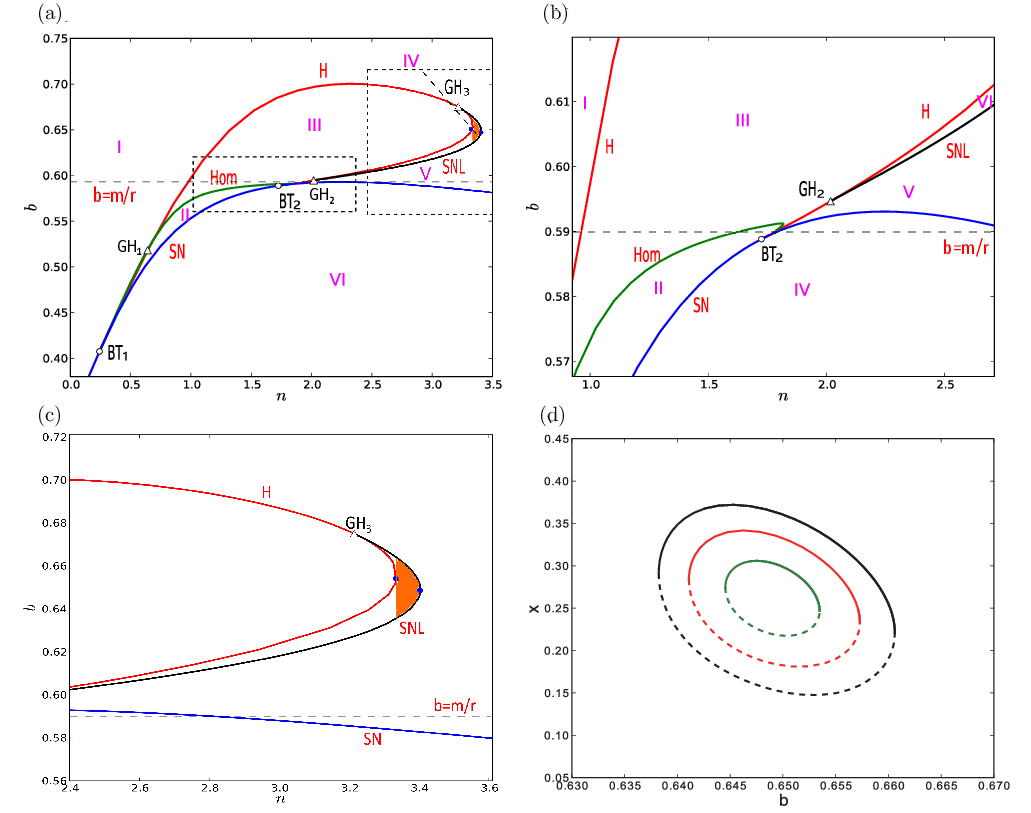}
\end{center}
\caption{\ Two-parameter bifurcation diagram of (\ref{A1}) with respect to $n$ and $b$. Panel (b) is an enlargement of panel~(a) near the codimension two points $BT_2$ and $GH_2$ while panel (c) is an enlargement near the isola region. Panel (d) shows a sample of such isolas of limit cycles. Parameter values are $r=0.65,  k=1.64445,  q=0.25,  a=0.8,  s=0.03, m=0.3855,  c=0.01$.}
\label{n_b_all}
\end{figure}

Considering $n$ and $b$ as the primary control parameters we obtain the bifurcation diagram in \textbf{Figure~\ref{n_b_all}}(a) -- and the enlargements in panels (b) and (c) -- where  two Bogdanov-Takens points and three generalized Hopf points are present. The discontinuous line corresponds to $b=m/r$, i.e., the boundary between weak and strong Allee effects; in particular, the region above this line coincides with the weak Allee area.
The entire bifurcation diagram in \textbf{Figure \ref{n_b_all}}(a) is divided into six regions: I-VI.
The saddle-node bifurcation curve of limit cycle $SNL$ connects two of these generalized Hopf bifurcation points, $GH_2$ and $GH_3$, indicating that the closed region IV between the Hopf bifurcation curve and curve $SNL$ represents a coexistence region of two limit cycles.
The orange region in IV determines the existence of isolas. It can be defined as follows: Let $\psi(n,b)=0$ represent the Hopf bifurcation curve, and let $\varphi(n,b)=0$ denote the $SNL$ curve, where $\psi$ and $\varphi$ are smooth functions. Specifically, assuming that region ${\rm IV}=\{(n,b):\psi(n,b)>0 \ \text{and} \ \varphi(n,b)>0\}$, the isola region is defined as $\mathcal{I}=\{(n,b) \in {\rm IV}: n_{1}<n<n_{2}\} \subseteq {\rm IV}$. Here, $n_{1}$ satisfies $\frac{\partial \psi}{\partial b}(n_{1},b_{1})=0$, indicating that the curve H has a fold with respect to $n$ at $(n,b)=(n_{1},b_{1})$, and $n_{2}$ satisfies $\frac{\partial \varphi}{\partial b}(n_{2},b_{2})=0$, indicating that the curve SNL has a fold with respect to $n$ at $(n,b)=(n_{2},b_{2})$. (Both fold points  $(n_{1},b_{1})\approx(3.32694,0.659059)$ and $(n_{2},b_{2})\approx(3.40459,0.648936)$ are marked as blue dots in \textbf{Figure \ref{n_b_all}}(a) and (c).) Hence, isolas are parameterized by $n$ and can effectively be traced as the point $(n,b)$ is moved vertically between the two branches of the curve SNL for each $n\in\ ]n_{1},n_{2}[$ fixed. In \textbf{Figure~\ref{n_b_all}}(d) we see a sample of these isolas for a range of values of $n$. The smallest shown isola corresponds to $n=3.39773$; hence, as the proportionality constant $n$ is decreased towards $n_1$, the possible amplitudes of the stable oscillations increase.

\begin{figure}[h!]
\begin{center}
\includegraphics[width=\textwidth]{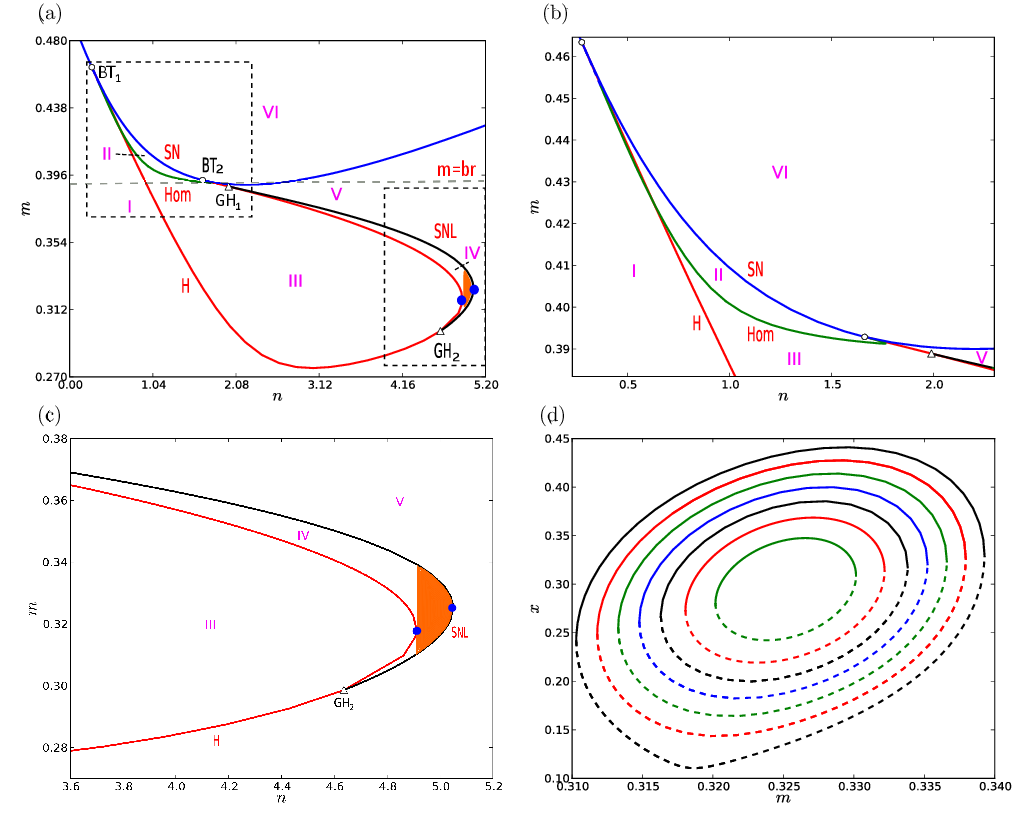}
\end{center}

\caption{\ Two-parameter bifurcation diagram of system (\ref{A1}) with respect to $n$ and $m$. Panels~(b) and (c) are enlargements of panel~(a). Panel (d) shows a sample of isolas of limit cycles present in the orange region in parameter space. Parameter values are $r=0.65, k=1.64445, q=0.25, a=0.8,  s=0.03, b=0.6, c=0.01$.}
\label{nm}
\end{figure}

 \textbf{Figure \ref{nm}} shows the bifurcation diagram with respect to variation of both $n$ and $m$. Here we find two Bogdanov-Takens (BT) points and two  generalized Hopf bifurcation (GH) points as organizing centers for the dynamics. In particular, region IV contains a set of parameter values which allows isolas of limit cycles.
The isola region in the $(n,m)$-plane corresponds to the orange sector  in  panels (a) and (b) and can be defined in a similar way to the previous cases. Namely, it corresponds to the region enclosed by the curves H and SNL for values of $n$ between their corresponding fold points (marked as blue dots in \textbf{Figure~\ref{nm}}).
More precisely, let $\psi(n,m)=0$ represent the Hopf bifurcation curve and suppose that $\varphi(n,m)=0$ denotes the SNL curve, where $\psi$ and $\varphi$ are smooth functions. Specifically, assuming that
${\rm IV}=\{(n,m):\phi(n,m)>0 \ \text{and} \ \psi(n,m)>0\},$
 the isola region is specified as $\mathcal{I}=\{(n,m) \in {\rm IV}: n_{1}<n<n_{2}\} \subset {\rm IV}$. Here, $n_{1}$ satisfies $\frac{\partial \psi}{\partial m}(n_{1},m_{1})=0$, indicating that the curve H has a fold with respect to $m$ at $(n,m)=(n_{1},m_{1})$, and $n_{2}$ satisfies $\frac{\partial \psi}{\partial m}(n_{2},m_{2})=0$, indicating that the curve SNL has a fold with respect to $m$ at $(n,m)=(n_{2},m_{2})$.
In panel (d) we see a sample of these isolas  in the $(m,x)$-plane for a range of values of $n$. As in the case of  \textbf{Figure~\ref{n_b_all}}, as  $n$ decreases towards $n_1$, and for each fixed $m$, the amplitudes of the stable limit cycles increase. Finally, notice that the isola region lies below the horizontal line $m=br$ and, hence, within the weak Allee scenario.

\section{Discussion}

In this paper, we thoroughly investigated the rich dynamics of a predator-prey model with additive Allee effect and a generalized Holling IV functional response.
We proved that the model is well posed in the sense that every solution with a realistic initial condition remains bounded in the first quadrant. This result was obtained by finding a positively invariant compact set. On the other hand, to study the dynamics near the origin we performed a suitable change of coordinates and time rescaling to obtain a qualitatively equivalent system which can be extended to the entire first quadrant. Here we made use of the blow-up technique to desingularize the equilibrium point at the origin. We found that condition $br-m<0$ favors the chance of extinction of both populations as the origin is a non hyperbolic attractor. This condition is equivalent to having a prey growth rate $r$ less than $\frac{m}{b}$, the ratio between the two Allee parameters; that is, the more pronounced the Allee effect (i.e., larger $m$ or smaller $b$), the greater the chances of extinction of both species. Conversely, if $r>\frac{m}{b}$, our findings indicate that neither positive solution
can converge to the origin.

In addition to the origin, our model can have up to two other equilibria in the absence of predator, which are (under genericity conditions) an unstable node and a saddle.
Moreover, there can be up to four positive equilibrium points (i.e., in the interior of the first quadrant) and we obtained analytical conditions to determine the local stability of any of these equilibria. Further, we performed a bifurcation analysis to reveal the existence of codimension two and three Bogdanov-Takens bifurcations and codimension-three generalized Hopf bifurcation. It is an interesting realization that all these analytical conditions for existence and stability of positive equilibria and associated bifurcations involve algebraic expressions that depend on all parameters.

 We employed numerical bifurcation analysis to further explore various bifurcation phenomena. As well as a number of bifurcations of equilibria, we found saddle-node bifurcation of limit cycles, and a codimension-2 cusp of limit cycles. Significantly, while previous works have reported mushroom and isola phenomena associated with equilibrium points~\cite{das2023origin,2021Incoherent} in different biological contexts, this is the first time (as far as we know) that mushroom and isolas of limit cycles have been found in population dynamics.
 Moreover, we can pin down the following (not necessarily mutually exclusive) biological scenarios based on bifurcation diagrams and phase portraits:

{\bf (1) Regime with Extinction of the Prey and Predators:} In this regime, there is an open set of initial conditions where both prey and predators may go extinct. This scenario corresponds to a strong Allee effect ($br<m$).

{\bf (2) Regime of Steady Coexistence:}  In this regime, the prey and predators coexist and tend to a stable equilibrium. These scenarios are present in region I in {\bf Figure}~\ref{nm} (a) (strong and weak Allee effect), in region I in {\bf Figure}~\ref{n_b_all} (a) (strong and weak Allee effect), and in region VI in {\bf Figure}~\ref{b_vs_m_all} (a) (weak Allee effect) -- see also {\bf Figure}~\ref{bm_phase}(VI).

{\bf (3) Regime of Oscillation:} In this regime, there is an open set of initial conditions where both the prey and predators tend to a stable oscillatory regime given by a stable limit cycle. We can observe this stable asymptotic pattern in regimes II, III, and IV in {\bf Figure}~\ref{nm}~(a) (strong and weak Allee effect), regimes IV and V in {\bf Figure} \ref{n_b_all}~(a) (strong and weak Allee effect), and regimes III, IV, and V in {\bf Figure} \ref{b_vs_m_all} (a) (weak Allee effect) -- see also {\bf Figure}~\ref{bm_phase}.
Based on the presence of isola dynamics of limit cycles, each of these closed bifurcation curves has an associated critical value of the Allee parameter $b$ in which the possible amplitudes of the asymptotic oscillations of both populations attain their maxima. In terms of the other Allee parameter, $m$, the weaker the Allee effect is (i.e., decreasing $m$ within the isola regime), the larger the amplitudes of the stable oscillations. Similarly, as $n$ tends to the minimum value that allows an isola, the possible amplitudes of the stable cycles increase. Since $n$ is a measure of the dependance of the predator carrying capacity on the prey abundance, the latter outcome may be the case if the predator is generalist and feeds on more sources than the actual prey considered in this model, leading to larger volumes of biomass that this ecological interaction may potentially sustain periodically.

{\bf (4) Regime of Oscillatory Multistability:} A large amplitude stable limit cycle surrounds an unstable limit cycle that encloses a smaller stable limit cycle. This pattern was found for specific values of  $b$ which favor this configuration within the mushroom bifurcation of limit cycles. This scenario is organized by a codimension-2 cusp bifurcation point of limit cycles. It should be noted that in our study, the coexisting limit cycles correspond to oscillations of great amplitude, which distinguishes our mechanism from that proposed by Aguirre et al.~\cite{aguirre2009three} -- In their work, two limit cycles bifurcate from a Hopf bifurcation point, while the third limit cycle emerges from a homoclinic bifurcation.

Based on all the above observations, we can draw the following conclusions about our model:

(a) Extinction of both populations may only occur if the Allee effect is strong.

(b) However, long term coexistence is possible even if the Allee effect is strong. (The actual basins of attraction of the origin and the positive steady states may be separated by stable manifolds of saddle objects, periodic orbits or homoclinic connections;~see for instance~\cite{contreras2018allee}.)

(c) A weak Allee effect $(0<m<br)$ can result in complex dynamics nonetheless, including the presence of isolas, mushrooms and cusps of limit cycles.

Future research directions may include analytically studying the boundary between the two-limit-cycle region and the three-limit-cycle region, as well as detecting possible transitions leading to mushroom bifurcations and the formation of isolas of limit cycles. This last avenue of investigation could well be implemented on conceptual return maps accounting for the local dynamics near the periodic orbits in suitable cross sections based  on~\cite{golubitsky1978theory}.
Another question that was beyond the scope of this installment is the determination of parametric curves corresponding to maxima of cycle amplitudes; a scheme similar to that in~\cite{contreras2020finding} may be implemented to follow this maximum values in two parameters by continuation. The identification of basins of attraction of equilibria and limit cycles is another open challenge; in combination with the presented bifurcation analysis, this would shed light on the design of control strategies to avoid extinction of both species through human intervention both at the level of initial conditions and parameter modulation.

\section*{Acknowledgments}
The authors are grateful to Professor Shigui Ruan for his helpful and insightful suggestions. This work was supported by the National NSF of China (No.~1167111 4), NSF of Zhejiang (No.~LY20A010002). PA thanks Proyecto Basal CMM-Universidad de Chile.

\bibliographystyle{siamplain}
\bibliography{references}

\begin{appendices}
\let\include\input
\section{Appendix A: Existence of positive equilibria for system (\ref{A1})}\label{AppA}

We study the positive roots of   Eq. (\ref{A5}), i.e.,
\begin{eqnarray*}
\bar{F}(x):= a_{4}x^4+a_{3}x^{3}+a_{2}x^{2}+a_{1}x+a_{0}=0,
\end{eqnarray*}
where
\begin{eqnarray*}
&&a_{4}=r, \ \ a_{3}=r (b+c-k), \ \ a_{2}=r (a+b c)+k (-r (b+c)+m+n q), \\
&&a_{1}=a r (b-k)+c k (m-b r)+b k n q, \ \ a_{0}=a k (m-b r).
\end{eqnarray*}
According to Descartes' rule of signs, the system (\ref{A1}) can have at most four positive roots. If there are indeed four positive roots, we denote them in ascending order as $x_{1}<x_{2}<x_{3}<x_{4}$, and the corresponding equilibria of the system are labeled as $E_{1}=(x_{1},y_{1}), E_{2}=(x_{2},y_{2}), E_{3}=(x_{3},y_{3}),$ and $E_{4}=(x_{4},y_{4})$. To simplify the discussion, we introduce the following notations and equations:
\begin{eqnarray}
\label{A2} &&\bar{F}_{1}(x):=\bar{F}'(x)=4a_{4}x^3+3a_{3}x^{2}+2a_{2}x+a_{1}=0,\\
\label{A6} &&\bar{F}_{2}(x):=\bar{F}''(x)=2(6a_{4}x^2+3a_{3}x+a_{2})=0.
\end{eqnarray}

Let us start by discussing the positivity of $\bar{F}_{2}(x)$. To locate the positive roots of (\ref{A6}), we  consider
\begin{eqnarray*}
6a_{4}x^2+3a_{3}x+a_{2}=0.
\end{eqnarray*}
Set
\begin{eqnarray*}
\Delta_{\bar{F}_{2}}=9a_{3}^{2}-24a_{2}a_{4}.
\end{eqnarray*}
Then equation (\ref{A6}) has no positive root if $\Delta_{\bar{F}_{2}}<0$, has one positive root of multiplicity 2 if $\Delta_{\bar{F}_{2}}=0$ and $a_{3}<0$, and has two positive roots (denoted by $\bar{x}_{1}$ and $\bar{x}_{2}$) if $\Delta_{\bar{F}_{2}}>0$, $a_{2}>0$ and $a_{3}<0$, where
\begin{eqnarray*}
\bar{x}_{1}=\frac{-3a_{3}-\sqrt{9a_{3}^{2}-24a_{2}a_{4}}}{12a_{4}},\\
\bar{x}_{2}=\frac{-3a_{3}+\sqrt{9a_{3}^{2}-24a_{2}a_{4}}}{12a_{4}}.
\end{eqnarray*}
On the basis of these facts, we discuss the existence of positive roots of (\ref{A5}) in three cases: $\Delta_{\bar{F}_{2}}<0$; $\Delta_{\bar{F}_{2}}=0$ and $a_{3}<0$; $\Delta_{\bar{F}_{2}}>0$, $a_{2}>0$ and $a_{3}<0$. Let $x_{i,i+1}$ be the coincidence point of $x_{i}$ and $x_{i+1}$ $(i=1,2,3)$, that is a multiple root of (\ref{A5}). We need to consider the following scenarios:\\

\noindent
$(C_1) \ \Delta_{\bar{F}_{2}}<0$.\\
In this case, (\ref{A6}) has no positive root and $\bar{F}_{2}(x)>0$ in $[0,+\infty)$, which means that $\bar{F_{1}}(x)$ is an increasing function in $[0,+\infty)$. Note that $\bar{F}_{1}(0)=a_{1}$ and $\bar{F}_{1}(x)\rightarrow +\infty$ as $x\rightarrow +\infty$. We have the following cases:\\

\noindent
$(C_{1a})$  If $\bar{F}_{1}(0)=a_{1}\geq 0$, $\bar{F}_{1}(x) \geq 0$ in $[0,+\infty)$, then $\bar{F}(x)$ is monotonically monotonically increasing in $[0,+\infty)$. Again note that $\bar{F}(0)=a_{0}$ and $\bar{F}(x)\rightarrow +\infty$ as $x\rightarrow +\infty$. We have the following subcases:\\
$(i)$    If $\bar{F}(0)=a_{0}<0$, (\ref{A5}) has a unique positive root;\\
$(ii)$   If $a_{0}\geq 0$, (\ref{A5}) has no positive root.\\

\noindent
$(C_{1b})$  When $a_{1}<0$, there is a unique positive root $\hat{x}_{1}$ in (\ref{A2}), which means that $\bar{F}(x)$ is a function that monotonically decreases first and then monotonically increases. It is worth noting that $\bar{F}(0)=a_{0}$ and $\bar{F}(x)\rightarrow +\infty$ as $x\rightarrow +\infty$. We get\\
$(i)$    If $\bar{F}(0)=a_{0}>0$, (\ref{A5}) has two positive roots if $\bar{F}(\hat{x}_{1})<0$, has one positive root of multiplicity 2 if $\bar{F}(\hat{x}_{1})=0$, has no positive root if $\bar{F}(\hat{x}_{1})>0$;\\
$(ii)$   If $a_{0} \leq 0$, (\ref{A5}) has a unique positive root.\\

\noindent
$(C_2)$  $\Delta_{\bar{F}_{2}}=0$ and $a_{3}<0$.\\
In this case, (\ref{A6}) has one positive root of multiplicity 2, $\bar{x}_{1}=\bar{x}_{2}=-\frac{a_{3}}{4a_{4}}$, and $\bar{F}_{2}(x) \geq 0$ in $[0,+\infty)$, which means that $\bar{F}_{1}(x)$ is an increasing function. Notice that $\bar{F}_{1}(0)=a_{1}$, $\bar{F}_{1}(\bar{x}_{1})=\frac{a_{3}^{3}-4a_{2}a_{3}a_{4}+8a_{1}a_{4}^{2}}{8a_{4}^{2}}$ and $\bar{F}_1(x)\rightarrow +\infty$ as $x\rightarrow +\infty$. We can conclude that:\\

\noindent
$(C_{2a})$  If $\bar{F}_{1}(0)=a_{1}\geq0$, $\bar{F}_{1}(x) \geq 0$ in $[0,+\infty)$, this is similar to $(C_{1a})$ and we have:\\
$(i)$    If $\bar{F}(0)=a_{0}<0$, (\ref{A5}) has a unique positive root;\\
$(ii)$   If $a_{0}\geq 0$, (\ref{A5}) has no positive root.\\

\noindent
$(C_{2b})$  When $a_{1}<0$, we have:\\
$(I)$    If $\bar{F}_{1}(\bar{x}_{2}) \neq0$, (\ref{A2}) has a positive root $\hat{x}_{2}$. This is similar to $(C_{1b})$ and we have\\
$(i)$    For $\bar{F}(0)=a_{0}>0$: (\ref{A5}) has two positive roots if $\bar{F}(\hat{x}_{2})<0$, has one positive root of multiplicity 2 if $\bar{F}(\hat{x}_{2})=0$, has no positive root if $\bar{F}(\hat{x}_{2})>0$;\\
$(ii)$   For $a_{0} \leq 0$: (\ref{A5}) has a unique positive root.\\
$(II)$   If $\bar{F}_{1}(\bar{x}_{2}) =0$, then $\bar{x}_{2}$ is a positive root of multiplicity 3 of (\ref{A2}), and we have\\
$(i)$    For $\bar{F}(0)=a_{0}>0$: (\ref{A5}) has two positive roots if $\bar{F}(\bar{x}_{2})<0$, has one positive root of multiplicity 4 if $\bar{F}(\bar{x}_{2})=0$, has no positive root if $\bar{F}(\bar{x}_{2})>0$;\\
$(ii)$   For $a_{0} \leq 0$: (\ref{A5}) has a unique positive root.\\

\noindent
$(C_3) \Delta_{\bar{F}_{2}}>0$ and $a_{2}>0, a_{3}<0$.\\
In this case, $\bar{x}_{1}$ and $\bar{x}_{2}$ are the maximum and minimum points of $\bar{F}_{1}$, respectively. We discuss the distribution of positive roots of (\ref{A5}) in two cases: $\bar{F}_{1}(0) \geq 0$ and $\bar{F}_{1}(0)<0$.\\

\noindent
$(C_{3a})$  When $\bar{F}_{1}(0)=a_{1} \geq 0$, $\bar{F}_{1}(\bar{x}_{1})>0$, based on the sign of $\bar{F}_{1}(\bar{x}_{2})$, we have:\\
$(I)$    If $\bar{F}_{1}(\bar{x}_{2})>0$, then $\bar{F}_{1}(x)\geq 0, x \in [0,+\infty)$, which means that $\bar{F}(x)$ is a monotonically increasing function in $[0,+\infty)$. Thus $\bar{F}(x)$ has no positive root if $a_{0} \geq 0$, has one positive root if $a_{0}<0$;\\
$(II)$   If $\bar{F}_{1}(\bar{x}_{2})=0$, then $\bar{x}_{2}$ is a positive root of multiplicity 2 of (\ref{A2}). Therefore, we can derive the distribution of positive roots of (\ref{A5}) as follows:\\
$(i)$    For $\bar{F}(\bar{x}_{2})<0$: a unique positive root;\\
$(ii)$   For $\bar{F}(\bar{x}_{2})=0$: one positive root of multiplicity 3;\\
$(iii)$  For $\bar{F}(\bar{x}_{2})>0$: a unique positive root if $\bar{F}(0)=a_{0}<0$, no positive root if $a_{0} \geq 0$.\\
$(III)$  If $\bar{F}_{1}(\bar{x}_{2})<0$, then (\ref{A2}) has two positive roots, which are denoted by $\tilde{x}_{1}, \tilde{x}_{2}$. Using $\bar{F}(0), \bar{F}(\tilde{x}_{1})$ and $\bar{F}(\tilde{x}_{2})$, the distribution of the positive roots of (\ref{A5}) can be summarized as follows:\\
$(i)$    For $\bar{F}(0)=a_{0} \geq 0, \bar{F}(\tilde{x}_{1})>0, \bar{F}(\tilde{x}_{2}) > 0$: no positive root;\\
$(ii)$   For $a_{0} \geq 0, \bar{F}(\tilde{x}_{1})>0, \bar{F}(\tilde{x}_{2}) = 0$: one positive root of multiplicity 2, which is denoted by $x_{3,4}=\tilde{x}_{2}$;\\
$(iii)$  For $a_{0} \geq 0, \bar{F}(\tilde{x}_{1})>0, \bar{F}(\tilde{x}_{2}) < 0$: two positive roots, which are denoted by $x_{3}<x_{4}$;\\
$(iv)$   For $a_{0} < 0,    \bar{F}(\tilde{x}_{1})>0, \bar{F}(\tilde{x}_{2}) > 0$: one positive root, which is denoted by $x_{1}$;\\
$(v)$    For $a_{0} < 0,    \bar{F}(\tilde{x}_{1})>0, \bar{F}(\tilde{x}_{2}) = 0$: two positive roots, one of them is a positive of multiplicity 2, which are denoted by $x_{1}<x_{3,4}=\tilde{x}_{2}$;\\
$(vi)$   For $a_{0} < 0,    \bar{F}(\tilde{x}_{1})>0, \bar{F}(\tilde{x}_{2}) < 0$: three positive roots, which are denoted by $x_{1}<x_{3}<x_{4}$;\\
$(vii)$  For $a_{0} < 0,    \bar{F}(\tilde{x}_{1})=0, \bar{F}(\tilde{x}_{2}) < 0$: two positive roots, one of them is a positive root of multiplicity 2, which are denoted by $x_{1,2}=\tilde{x}_{2}<x_{4}$;\\
$(viii)$ For $a_{0} < 0,    \bar{F}(\tilde{x}_{1})<0, \bar{F}(\tilde{x}_{2}) < 0$: a unique positive root $x_{4}$.\\

\noindent
$(C_{3b})$  When $a_{1}<0$,  based on the signs of $\bar{F}_{1}(\bar{x}_{1})$ and $\bar{F}_{1}(\bar{x}_{2})$, we can derive the following:\\

\noindent
$(I)$    If $\bar{F}_{1}(\bar{x}_{1})>0, \bar{F}_{1}(\bar{x}_{2})<0$, then (\ref{A2}) has three positive roots, which are denoted by $\hat{x}_{3}<\hat{x}_{4}<\hat{x}_{5}$. Thus we can obtain the following conclusion:\\
$(I_a)$   When $\bar{F}(0)=a_{0}>0$, we can get the distribution of positive roots of (\ref{A5}) as follows:\\
$(i)$    For $\bar{F}(\hat{x}_{3})<0, \bar{F}(\hat{x}_{4})>0, \bar{F}(\hat{x}_{5})<0$: four positive roots, which are denoted by $x_{1}<x_{2}<x_{3}<x_{4}$;\\
$(ii)$   For $\bar{F}(\hat{x}_{3})<0, \bar{F}(\hat{x}_{4})>0, \bar{F}(\hat{x}_{5})=0$: three positive roots (one of them being a positive root of multiplicity 2) which are denoted by $x_{1}<x_{2}<x_{3,4}=\hat{x}_{5}$;\\
$(iii)$  For $\bar{F}(\hat{x}_{3})<0, \bar{F}(\hat{x}_{4})>0, \bar{F}(\hat{x}_{5})>0$: two positive roots, which are denoted by $x_{1}<x_{2}$;\\
$(iv)$   For $\bar{F}(\hat{x}_{3})<0, \bar{F}(\hat{x}_{4})=0, \bar{F}(\hat{x}_{5})<0$: three positive roots (one of them being a positive root of multiplicity 2) which are denoted by $x_{1}<x_{2,3}=\hat{x}_{4}<x_{4}$;\\
$(v)$    For $\bar{F}(\hat{x}_{3})<0, \bar{F}(\hat{x}_{4})<0, \bar{F}(\hat{x}_{5})<0$: two positive roots, which are denoted by $x_{1}<x_{4}$;\\
$(vi)$   For $\bar{F}(\hat{x}_{3})=0, \bar{F}(\hat{x}_{4})>0, \bar{F}(\hat{x}_{5})<0$: three positive roots (one of them being a positive root of multiplicity 2) which are denoted by $x_{1,2}=\hat{x}_{3}<x_{3}<x_{4}$;\\
$(vii)$  For $\bar{F}(\hat{x}_{3})=0, \bar{F}(\hat{x}_{4})>0, \bar{F}(\hat{x}_{5})=0$: two positive roots of multiplicity 2, which are denoted by $x_{1,2}=\hat{x}_{3}<x_{3,4}=\hat{x}_{5}$;\\
$(viii)$ For $\bar{F}(\hat{x}_{3})=0, \bar{F}(\hat{x}_{4})>0, \bar{F}(\hat{x}_{5})>0$: one positive root of multiplicity 2, which is denoted by $x_{1,2}=\hat{x}_{3}$;\\
$(ix)$   For $\bar{F}(\hat{x}_{3})>0, \bar{F}(\hat{x}_{4})>0, \bar{F}(\hat{x}_{5})<0$: two positive roots, which are denoted by $x_{3}<x_{4}$;\\
$(x)$    For $\bar{F}(\hat{x}_{3})>0, \bar{F}(\hat{x}_{4})>0, \bar{F}(\hat{x}_{5})=0$: one positive root of multiplicity 2, which is denoted by $x_{3,4}=\hat{x}_{5}$;\\
$(xi)$   For $\bar{F}(\hat{x}_{3})>0, \bar{F}(\hat{x}_{4})>0, \bar{F}(\hat{x}_{5})>0$: no positive root.\\
$(I_b)$   When $a_{0} \leq 0$, we obtain the distribution of positive roots of (\ref{A5}) as follows:\\
$(i)$    For $\bar{F}(\hat{x}_{3})<0, \bar{F}(\hat{x}_{4})>0, \bar{F}(\hat{x}_{5})<0$: three positive roots, which are denoted by $x_{2}<x_{3}<x_{4}$;\\
$(ii)$   For $\bar{F}(\hat{x}_{3})<0, \bar{F}(\hat{x}_{4})>0, \bar{F}(\hat{x}_{5})=0$: two positive roots (one of them being a positive root of multiplicity 2) which are denoted by $x_{2}<x_{3,4}=\hat{x}_{5}$;\\
$(iii)$  For $\bar{F}(\hat{x}_{3})<0, \bar{F}(\hat{x}_{4})>0, \bar{F}(\hat{x}_{5})>0$: a unique positive root, which is denoted by $x_{2}$;\\
$(iv)$   For $\bar{F}(\hat{x}_{3})<0, \bar{F}(\hat{x}_{4})=0, \bar{F}(\hat{x}_{5})<0$: two positive roots (one of them being a positive root of multiplicity 2) which are denoted by $x_{2,3}=\hat{x}_{4}<x_{4}$;\\
$(v)$    For $\bar{F}(\hat{x}_{3})<0, \bar{F}(\hat{x}_{4})<0, \bar{F}(\hat{x}_{5})<0$: a unique positive root, which is denoted by $x_{4}$.\\

\noindent
$(II)$   If $\bar{F}_{1}(\bar{x}_{1})>0, \bar{F}_{1}(\bar{x}_{2})=0$, then $\bar{F}_{1}(x)$ has two positive roots $\hat{x}_{3}$ and $\bar{x}_{2}$, in which $\bar{x}_{2}$ is a positive root of multiplicity 2. Similarly, we have the following result:\\
$(II_a)$  When $\bar{F}(0)=a_{0}>0$, based on the sign of $\bar{F}(\bar{x}_{2})$, we can derive the distribution of positive roots of (\ref{A5}) as follows.\\
$(i)$    For $\bar{F}(\bar{x}_{2})>0$: two positive roots if $\bar{F}(\hat{x}_{3})<0$, which are denoted by $x_{1}<x_{2}$, one positive root of multiplicity 2 if $\bar{F}(\hat{x}_{3})=0$, and no positive root if $\bar{F}(\hat{x}_{3})>0$;\\
$(ii)$   For $\bar{F}(\bar{x}_{2})=0$: two positive roots (one of them being a positive root of multiplicity 3)   which are denoted by $x_{1}<x_{2,3,4}=\bar{x}_{2}$;\\
$(iii)$  For $\bar{F}(\bar{x}_{2})<0$: two positive roots, which are denoted by $x_{1}<x_{4}$;\\
$(II_b)$  When $a_{0}\leq0$, based on the sign of $\bar{F}(\bar{x}_{2})$, we can get the distribution of positive roots of (\ref{A5}) as follows:\\
$(i)$    For $\bar{F}(\bar{x}_{2})>0$: one positive root, which is denoted by $x_{2}$;\\
$(ii)$   For $\bar{F}(\bar{x}_{2})=0$: one positive root of multiplicity 3,   which is denoted by $x_{2,3,4}=\bar{x}_{2}$;\\
$(iii)$  For $\bar{F}(\bar{x}_{2})<0$: one positive root, which is denoted by $x_{4}$.\\

\noindent
$(III)$  If $\bar{F}_{1}(\bar{x}_{1})>0, \bar{F}_{1}(\bar{x}_{2})>0$, then $\bar{F}_{1}(x)$ has one positive root $\hat{x}_{3}$. Thus, we have\\
$(i)$    If $\bar{F}(0)=a_{0}>0$: (\ref{A5}) has two positive roots if $\bar{F}(\hat{x}_{3})<0$, has one positive root of multiplicity 2 if $\bar{F}(\hat{x}_{3})=0$, has no positive root if $\bar{F}(\hat{x}_{3})>0$;\\
$(ii)$   If $a_{0} \leq 0$: (\ref{A5}) has a unique positive root.\\

\noindent
$(IV)$   If $\bar{F}_{1}(\bar{x}_{1})=0, \bar{F}_{1}(\bar{x}_{2})<0$, $\bar{F}_{1}(x)$ has two positive roots $\bar{x}_{1}$ and $\hat{x}_{5}$, in which $\bar{x}_{1}$ has multiplicity 2. Then we have:\\
$(IV_a)$  When $\bar{F}(0)=a_{0}>0$, based on the signs of $\bar{F}(\bar{x}_{1})$ and $\bar{F}(\hat{x}_{5})$, we get the distribution of positive roots of (\ref{A5}) as follows.\\
$(i)$    For $\bar{F}(\bar{x}_{1})>0, \bar{F}(\hat{x}_{5})<0$: two positive roots, which are denoted by $x_{3}<x_{4}$;\\
$(ii)$   For $\bar{F}(\bar{x}_{1})>0, \bar{F}(\hat{x}_{5})=0$: one positive root of multiplicity 2, which is denoted by $x_{3,4}=\hat{x}_{5}$;\\
$(iii)$  For $\bar{F}(\bar{x}_{1})>0, \bar{F}(\hat{x}_{5})>0$: no positive root.\\
$(iv)$   For $\bar{F}(\bar{x}_{1})=0$: two positive roots (one of them being a positive root of multiplicity 3) which are denoted by $x_{1,2,3}=\bar{x}_{1}<x_{4}$;\\
$(v)$    For $\bar{F}(\bar{x}_{1})<0$: two positive roots, which are denoted by $x_{1}<x_{4}$.\\
$(IV_b)$  When $a_{0}\leq0$: one positive root, which is denoted by $x_{4}$.\\

\noindent
$(V)$    When $\bar{F}_{1}(\bar{x}_{1})<0, \bar{F}_{1}(\bar{x}_{2})<0$, $\bar{F}_{1}(x)$ has one positive root $\hat{x}_{5}$.\\
$(i)$   If $\bar{F}(0)=a_{0}>0$: two positive roots if $\bar{F}(\hat{x}_{5})<0$, which are denoted by $x_{3}<x_{4}$, one positive root of multiplicity 2 if $\bar{F}(\hat{x}_{5})=0$, and no positive root if $\bar{F}(\hat{x}_{5})>0$;\\
$(ii)$   If $a_{0} \leq0$: a unique positive root $x_{4}$.

\vspace{-50mm}
 
\section{Appendix B: Coefficients in the proof of Theorem \ref{tha1} and Theorem~\ref{tha2}}\label{AppB}

\begin{eqnarray*}
&&\hat{a}_{10}=\frac{r x^{*} (a+x^{*} (c+x^{*})) (-b+k-2 x^{*})}{k (a (b+2 x^{*})+x^{*2} (c-b))},\
\hat{a}_{01}=\frac{r x^{*} (a+x^{*} (c+x^{*})) (b-k+2 x^{*})}{k n (a (b+2 x^{*})+x^{*2} (c-b))},\\
&&\hat{a}_{20}=\frac{r}{k (b+x^{*}) (a+x^{*} (c+x^{*})) (a (b+2 x^{*})+x^{*2} (c-b))} (x^{*} (b+x^{*}) (x^{*3}-a (c+3 x^{*})) (b-\\
&&k+2 x^{*})-(a+x^{*} (c+x^{*})) (a (b^2-b k+3 b x^{*}+2 x^{*2})+x^{*2} (-b^2+b (k-3 x^{*})+c x^{*}))),\\
&&\hat{a}_{11}=\frac{r (a-x^{*2}) (b-k+2 x^{*})}{k n (a (b+2 x^{*})+x^{*2} (c-b))},\\
&&\hat{a}_{30}=-\frac{r}{k (b+x^{*})^2 (a+x^{*} (c+x^{*}))^2 (a (b+2 x^{*})+x^{*2} (c-b))} (x^{*} (b+x^{*})^2 (a^2-a (c^2+4 c x^{*}\\
&&+6 x^{*2})+x^{*4}) (b-k+2 x^{*})+b (a+x^{*} (c+x^{*}))^2 (a k+x^{*2} (c-k+2 x^{*}))),\\
&&\hat{a}_{21}=\frac{r (x^{*3}-a (c+3 x^{*})) (b-k+2 x^{*})}{k n (a+x^{*} (c+x^{*})) (a (b+2 x^{*})+x^{*2} (c-b))};\
\hat{b}_{10}=n s,\
\hat{b}_{01}=-s,\
\hat{b}_{20}=-\frac{n s}{x^{*}},\\
&&\hat{b}_{11}=\frac{2 s}{x^{*}},\
\hat{b}_{02}=-\frac{s}{n x^{*}},\
\hat{b}_{30}=\frac{n s}{x^{*2}},\
\hat{b}_{21}=-\frac{2 s}{x^{*2}},\
\hat{b}_{12}=\frac{s}{n x^{*2}};\\
&&\hat{c}_{20}=\frac{\hat{a}_{10} (\hat{a}_{01} \hat{b}_{11}-\hat{a}_{11} \hat{b}_{01})-\hat{a}_{10}^2 \hat{b}_{02}+\hat{a}_{01} (\hat{a}_{20} \hat{b}_{01}-\hat{a}_{01} \hat{b}_{20})}{\hat{a}_{01} (\hat{a}_{10}+\hat{b}_{01})},\\
&&\hat{c}_{11}=\frac{-2 \hat{a}_{01}^2 \hat{b}_{20}+\hat{a}_{01} \hat{b}_{01} (2 \hat{a}_{20}-\hat{b}_{11})+\hat{a}_{11} \hat{b}_{01}^2+\hat{a}_{10} (\hat{a}_{01} \hat{b}_{11}-\hat{b}_{01} (\hat{a}_{11}-2 \hat{b}_{02}))}{\hat{a}_{01} (\hat{a}_{10}+\hat{b}_{01})},\\
&&\hat{c}_{02}=\frac{-\hat{a}_{01}^2 \hat{b}_{20}+\hat{a}_{01} \hat{b}_{01} (\hat{a}_{20}-\hat{b}_{11})+\hat{b}_{01}^2 (\hat{a}_{11}-\hat{b}_{02})}{\hat{a}_{01} (\hat{a}_{10}+\hat{b}_{01})},\\
&&\hat{c}_{30}=\frac{\hat{a}_{10} (\hat{a}_{01} \hat{b}_{21}-\hat{a}_{21} \hat{b}_{01})-\hat{a}_{10}^2 \hat{b}_{12}+\hat{a}_{01} (\hat{a}_{30} \hat{b}_{01}-\hat{a}_{01} \hat{b}_{30})}{\hat{a}_{01} (\hat{a}_{10}+\hat{b}_{01})},\\
&&\hat{c}_{21}=\frac{-3 \hat{a}_{01}^2 \hat{b}_{30}+\hat{a}_{01} \hat{b}_{01} (3 \hat{a}_{30}-\hat{b}_{21})+\hat{a}_{21} \hat{b}_{01}^2-\hat{a}_{10}^2 \hat{b}_{12}+2 \hat{a}_{10} (\hat{b}_{01} (\hat{b}_{12}-\hat{a}_{21})+\hat{a}_{01} \hat{b}_{21})}{\hat{a}_{01} (\hat{a}_{10}+\hat{b}_{01})},\\
&&c_{12}=\frac{-3 \hat{a}_{01}^2 \hat{b}_{30}+\hat{a}_{01} \hat{b}_{01} (3 \hat{a}_{30}-2 \hat{b}_{21})+\hat{b}_{01}^2 (2 \hat{a}_{21}-\hat{b}_{12})+\hat{a}_{10} (\hat{a}_{01} \hat{b}_{21}-\hat{b}_{01} (\hat{a}_{21}-2 \hat{b}_{12}))}{\hat{a}_{01} (\hat{a}_{10}+\hat{b}_{01})},\\
&&\hat{c}_{03}=\frac{-\hat{a}_{01}^2 \hat{b}_{30}+\hat{a}_{01} \hat{b}_{01} (\hat{a}_{30}-\hat{b}_{21})+\hat{b}_{01}^2 (\hat{a}_{21}-\hat{b}_{12})}{\hat{a}_{01} (\hat{a}_{10}+\hat{b}_{01})};\\
&&\hat{d}_{01}=\frac{\hat{a}_{10} (\hat{a}_{10}+\hat{b}_{01})+\hat{a}_{01} \hat{b}_{10}+\hat{b}_{01}^2}{\hat{a}_{10}+\hat{b}_{01}}=p(x^{*}),\\
&&\hat{d}_{20}=\frac{\hat{a}_{01}^2 \hat{b}_{20}+\hat{a}_{10} \hat{a}_{01} (\hat{a}_{20}-\hat{b}_{11})+\hat{a}_{10}^2 (\hat{b}_{02}-\hat{a}_{11})}{\hat{a}_{01} (\hat{a}_{10}+\hat{b}_{01})},\\
&&\hat{d}_{11}=\frac{\hat{a}_{10} (\hat{b}_{01} (\hat{a}_{11}-2 \hat{b}_{02})+\hat{a}_{01} (2 \hat{a}_{20}-\hat{b}_{11}))+\hat{a}_{01} (2 \hat{a}_{01} \hat{b}_{20}+\hat{b}_{01} \hat{b}_{11})-\hat{a}_{10}^2 \hat{a}_{11}}{\hat{a}_{01} (\hat{a}_{10}+\hat{b}_{01})},\\
&&\hat{d}_{02}=\frac{\hat{a}_{01}^2 \hat{b}_{20}+\hat{a}_{01} (\hat{a}_{10} \hat{a}_{20}+\hat{b}_{01} \hat{b}_{11})+\hat{b}_{01} (\hat{a}_{10} \hat{a}_{11}+\hat{b}_{01} \hat{b}_{02})}{\hat{a}_{01} (\hat{a}_{10}+\hat{b}_{01})},\\
&&\hat{d}_{30}=\frac{\hat{a}_{01}^2 \hat{b}_{30}+\hat{a}_{10} \hat{a}_{01} (\hat{a}_{30}-\hat{b}_{21})+\hat{a}_{10}^2 (\hat{b}_{12}-\hat{a}_{21})}{\hat{a}_{01} (\hat{a}_{10}+\hat{b}_{01})},
\end{eqnarray*}

\begin{eqnarray*}
&&\hat{d}_{21}=\frac{\hat{a}_{10}^2 (\hat{b}_{12}-2 \hat{a}_{21})+\hat{a}_{10} (\hat{b}_{01} (\hat{a}_{21}-2 \hat{b}_{12})+\hat{a}_{01} (3 \hat{a}_{30}-2 \hat{b}_{21}))+\hat{a}_{01} (3 \hat{a}_{01} \hat{b}_{30}+\hat{b}_{01} \hat{b}_{21})}{\hat{a}_{01} (\hat{a}_{10}+\hat{b}_{01})},\\
&&\hat{d}_{12}=\frac{3 \hat{a}_{01}^2 \hat{b}_{30}+2 \hat{a}_{01} \hat{b}_{01} \hat{b}_{21}+\hat{a}_{10} (2 \hat{b}_{01} (\hat{a}_{21}-\hat{b}_{12})+\hat{a}_{01} (3 \hat{a}_{30}-\hat{b}_{21}))-\hat{a}_{10}^2 \hat{a}_{21}+\hat{b}_{01}^2 \hat{b}_{12}}{\hat{a}_{01} (\hat{a}_{10}+\hat{b}_{01})},\\
&&\hat{d}_{03}=\frac{\hat{a}_{01}^2 \hat{b}_{30}+\hat{a}_{01} (\hat{a}_{10} \hat{a}_{30}+\hat{b}_{01} \hat{b}_{21})+\hat{b}_{01} (\hat{a}_{10} \hat{a}_{21}+\hat{b}_{01} \hat{b}_{12})}{\hat{a}_{01} (\hat{a}_{10}+\hat{b}_{01})};\\
&&a^{*}_{10}=s,\
a^{*}_{01}=-\frac{q x^{*}}{a+x^{*}(c +x^{*})},\
a^{*}_{20}=\frac{s (a-x^{*2})}{2 x^{*} (a+x^{*} (c+x^{*}))},\
a^{*}_{11}=\frac{q (x^{*2}-a)}{(a+x^{*} (c+x^{*}))^2},\\
&&a^{*}_{30}=\frac{s}{2 (a+x^{*} (c+x^{*}))^3} (2 (a^2-a (c^2+4 c x^{*}+6 x^{*2})+x^{*4})-\frac{b (a+x^{*} (c+x^{*}))}{x^{*2} (b+x^{*})} (a^2+3 a x^{*}\\
&&\times (c+2 x^{*})-x^{*3} (c+3 x^{*}))),\
a^{*}_{21}=\frac{q (a (c+3 x^{*})-x^{*3})}{(a+x^{*} (c+x^{*}))^3},\\
&&a^{*}_{40}=\frac{s}{2 (a+x^{*} (c+x^{*}))^4} (-2 (a^2 (2 c+5 x^{*})-a (c^3+5 c^2 x^{*}+10 c x^{*2}+10 x^{*3})+x^{*5})\\
&&+\frac{b}{x^{*2} (b+x^{*})^2}(a+x^{*} (c+x^{*}))^2(a^2+3 a x^{*} (c+2 x^{*})-x^{*3} (c+3 x^{*}))),\\
&&a^{*}_{31}=\frac{q (a^2-a (c^2+4 c x^{*}+6 x^{*2})+x^{*4})}{(a+x^{*} (c+x^{*}))^4};\
b^{*}_{10}=\frac{s^2 (a+x^{*} (c+x^{*}))}{q x^{*}},\
b^{*}_{01}=-s,\\
&&b^{*}_{20}=-\frac{s^2 (a+x^{*} (c+x^{*}))}{q x^{*2}},\
b^{*}_{11}=\frac{2 s}{x^{*}},\
b^{*}_{02}=-\frac{q}{a+x^{*} (c+x^{*})},\
b^{*}_{30}=\frac{s^2 (a+x^{*} (c+x^{*}))}{q x^{*3}},\\
&&b^{*}_{21}=-\frac{2 s}{x^{*2}},\
b^{*}_{12}=\frac{q}{x^{*} (a+x^{*} (c+x^{*}))},\
b^{*}_{40}=-\frac{s^2 (a+x^{*} (c+x^{*}))}{q x^{*4}},\
b^{*}_{31}=\frac{2 s}{x^{*3}},\\
&&b^{*}_{22}=-\frac{q}{x^{*2} (a+x^{*} (c+x^{*}))};\
c^{*}_{20}=\frac{a^{*2}_{10} b^{*}_{02}}{a^{*}_{01}}-a^{*}_{10} b^{*}_{11}-a^{*}_{20} b^{*}_{01}+a^{*}_{11} b^{*}_{10}+a^{*}_{01} b^{*}_{20},\\
&&c^{*}_{02}=\frac{a^{*}_{11}+b^{*}_{02}}{a^{*}_{01}},\
c^{*}_{30}=-a^{*}_{30} b^{*}_{01}+a^{*}_{21} b^{*}_{10}-a^{*}_{20} b^{*}_{11}+a^{*}_{11} b^{*}_{20}+a^{*}_{01} b^{*}_{30}+\frac{a^{*2}_{10} (a^{*}_{01} b^{*}_{12}-a^{*}_{11} b^{*}_{02})}{a^{*2}_{01}}\\
&&+a^{*}_{10} (\frac{2 a^{*}_{20} b^{*}_{02}}{a^{*}_{01}}-b^{*}_{21}),\
c^{*}_{21}=\frac{1}{a^{*2}_{01}}(a^{*}_{10} (2 a^{*}_{11} b^{*}_{02}-2 a^{*}_{01} (a^{*}_{21}+b^{*}_{12})+a^{*2}_{11})+a^{*}_{01} (a^{*}_{01} (3 a^{*}_{30}+b^{*}_{21})\\
&&-a^{*}_{20} (a^{*}_{11}+2 b^{*}_{02}))),\
c^{*}_{12}=\frac{1}{a^{*2}_{01}}(a^{*}_{01} (2 a^{*}_{21}+b^{*}_{12})-a^{*}_{11} (a^{*}_{11}+b^{*}_{02})),\\
&&c^{*}_{40}=-a^{*}_{40} b^{*}_{01}+a^{*}_{31} b^{*}_{10}-a^{*}_{30} b^{*}_{11}+a^{*}_{21} b^{*}_{20}-a^{*}_{20} b^{*}_{21}+a^{*}_{11} b^{*}_{30}-a^{*}_{10} b^{*}_{31}+a^{*}_{01} b^{*}_{40}+\frac{a^{*2}_{10} a^{*2}_{11} b^{*}_{02}}{a^{*3}_{01}}\\
&&-\frac{a^{*}_{10} (a^{*}_{10} a^{*}_{21} b^{*}_{02}+a^{*}_{11} (2 a^{*}_{20} b^{*}_{02}+a^{*}_{10} b^{*}_{12}))}{a^{*2}_{01}}+\frac{a^{*2}_{10} b^{*}_{22}+2 a^{*}_{30} a^{*}_{10} b^{*}_{02}+2 a^{*}_{20} a^{*}_{10} b^{*}_{12}+a^{*2}_{20} b^{*}_{02}}{a^{*}_{01}},\\
&&c^{*}_{31}=\frac{1}{a^{*3}_{01}} (-a^{*}_{10} (2 a^{*2}_{11} b^{*}_{02}-a^{*}_{01} a^{*}_{11} (3 a^{*}_{21}+2 b^{*}_{12})+a^{*}_{01} (a^{*}_{01} (3 a^{*}_{31}+2 b^{*}_{22})-2 a^{*}_{21} b^{*}_{02})+a^{*3}_{11})\\
&&+a^{*}_{01} (a^{*}_{11} (2 a^{*}_{20} b^{*}_{02}-a^{*}_{01} a^{*}_{30})+a^{*}_{01} (-2 a^{*}_{30} b^{*}_{02}-2 a^{*}_{20} (a^{*}_{21}+b^{*}_{12})+a^{*}_{01} (4 a^{*}_{40}+b^{*}_{31}))+a^{*}_{20} a^{*2}_{11})),\\
&&c^{*}_{22}=\frac{a^{*2}_{11} b^{*}_{02}-a^{*}_{01} a^{*}_{11} (3 a^{*}_{21}+b^{*}_{12})+a^{*}_{01} (a^{*}_{01} (3 a^{*}_{31}+b^{*}_{22})-a^{*}_{21} b^{*}_{02})+a^{*3}_{11}}{a^{*3}_{01}};
\end{eqnarray*}

\begin{eqnarray*}
&&\bar{a}_{00}=-\frac{\delta _1 x^{*}}{b+x^{*}},\
\bar{a}_{10}=s-\frac{b \delta _1}{(b+x^{*})^2},\
\bar{a}_{01}=-\frac{q x^{*}}{a+x^{*} (c+x^{*})},\
\bar{a}_{20}=\frac{s (a (c+3 x^{*})-x^{*3})}{(a+x^{*} (c+x^{*}))^2}+\\
&&+\frac{s (a (b+2 x^{*})+x^{*2} (c-b))}{x^{*} (a+x^{*} (c+x^{*})) (b-k+2 x^{*})}+\frac{b}{(b+x^{*})^3} (\delta _1 -\frac{s (b+x^{*})^2 (a k+x^{*2} (c-k+2 x^{*}))}{x^{*} (a+x^{*} (c+x^{*})) (b-k+2 x^{*})}),\\
&&\bar{a}_{11}=\frac{q (x^{*2}-a)}{(a+x^{*} (c+x^{*}))^2};\
\bar{b}_{00}=\frac{\delta _2 s^2 x^{*} (a+x^{*} (c+x^{*}))}{s (a+x^{*} (c+x^{*}))+\delta _2 q x^{*}},\\
&&\bar{b}_{10}=\frac{s^3 (a+x^{*} (c+x^{*}))^2}{q x^{*}  (s (a+x^{*} (c+x^{*}))+\delta _2 q x^{*})},\
\bar{b}_{01}=\frac{s (\delta _2 q x^{*}-s (a+x^{*} (c+x^{*})))}{s (a+x^{*} (c+x^{*}))+\delta _2 q x^{*}},\\
&&\bar{b}_{20}=-\frac{s^3 (a+x^{*} (c+x^{*}))^2}{q x^{*2} (s (a+x^{*} (c+x^{*}))+\delta _2 q x^{*})},\
\bar{b}_{11}=\frac{2 s^2 (a+x^{*} (c+x^{*}))}{x^{*} (s (a+x^{*} (c+x^{*}))+\delta _2 q x^{*})},\\
&&\bar{b}_{02}=-\frac{q s}{s (a+x^{*} (c+x^{*}))+\delta _2 q x^{*}};\
\bar{c}_{00}=\frac{\bar{a}_{00}^2 \bar{b}_{02}}{\bar{a}_{01}}-\bar{a}_{00} \bar{b}_{01}+\bar{a}_{01} \bar{b}_{00},\\
&&\bar{c}_{10}=\bar{a}_{10} (\frac{2 \bar{a}_{00} \bar{b}_{02}}{\bar{a}_{01}}-\bar{b}_{01})+\bar{a}_{11} (\bar{b}_{00}-\frac{\bar{a}_{00}^2 \bar{b}_{02}}{\bar{a}_{01}^2})+\bar{a}_{01} \bar{b}_{10}-\bar{a}_{00} \bar{b}_{11},\\
&&\bar{c}_{01}=-\frac{\bar{a}_{00} (a_{11}+2 \bar{b}_2)}{\bar{a}_{01}}+\bar{a}_{10}+\bar{b}_{01},\
\bar{c}_{20}=-\bar{a}_{20} \bar{b}_{01}+\bar{a}_{11} \bar{b}_{10}-\bar{a}_{10} \bar{b}_{11}+\bar{a}_{01} \bar{b}_{20}\\
&&+\frac{\bar{a}_{00}^2 \bar{a}_{11}^2 \bar{b}_{02}}{a_{01}^3}-\frac{2 \bar{a}_{00} \bar{a}_{10} \bar{a}_{11} \bar{b}_{02}}{\bar{a}_{01}^2}+\frac{(\bar{a}_{10}^2+2 \bar{a}_{00} \bar{a}_{20}) \bar{b}_{02}}{\bar{a}_{01}},\\
&&\bar{c}_{11}=-\frac{(\bar{a}_{01} \bar{a}_{10}-\bar{a}_{00} \bar{a}_{11}) (\bar{a}_{11}+2 \bar{b}_{02})}{\bar{a}_{01}^2}+2 \bar{a}_{20}+\bar{b}_{11},\
\bar{c}_{02}=\frac{\bar{a}_{11}+\bar{b}_{02}}{\bar{a}_{01}};\
\bar{d}_{00}=\bar{c}_{00},\\
&&\bar{d}_{10}=\bar{c}_{10}-2 \bar{c}_{00} \bar{c}_{02},\
\bar{d}_{01}=\bar{c}_{01},\
\bar{d}_{20}=\bar{c}_{00} \bar{c}_{02}^2-2 \bar{c}_{10} \bar{c}_{02}+\bar{c}_{20},\
\bar{d}_{11}=\bar{c}_{11}-\bar{c}_{01} \bar{c}_{02};\\
&&\bar{e}_{00}=\bar{d}_{00}-\frac{\bar{d}_{10}^2}{4 \bar{d}_{20}},\
\bar{e}_{01}=\bar{d}_{01}-\frac{\bar{d}_{10} \bar{d}_{11}}{2 \bar{d}_{20}},\
\bar{e}_{20}\bar{d}_{20},\
\bar{e}_{11}=\bar{d}_{11}.
\end{eqnarray*}

\section{Appendix C: The proof of Theorem \ref{tha33}}\label{AppC}

\begin{proof}
Choosing $n, b$ and $m$ as bifurcation parameters, system (\ref{A1}) can be written as
\begin{equation}\label{b3}
\begin{array}{rrl}
&&\dfrac{dx}{dt}=\left(r \left(1-\dfrac{x}{k}\right)-\dfrac{m+\kappa_{1}}{x+b+\kappa_{2}}\right)x-\dfrac{q x y}{x^2+c x+a}, \vspace{2mm}\\
&&\dfrac{dy}{dt}=sy\left(1-\dfrac{y}{(n+\kappa_{3})x}\right),
\end{array}
\end{equation}
where $\kappa=(\kappa_{1},\kappa_{2},\kappa_{3})\sim (0,0,0).$

Conditions $F(x^{*})=F'(x^{*})=p(x^{*})=p'(x^{*})=0$ imply that parameters $r,m,n,k$ are defined as in~\eqref{1000}. Next through the transformation $X=x-x^{*},Y=y-n x^{*}$ and the Taylor expansion, we get an equivalent system of system (\ref{b3}) as follows (for convenience, in every subsequent transformation, we rename $X,Y$ and $\tau$ as $x,y$ and $t$, respectively)
\begin{equation}\label{c8}
\begin{array}{rrl}
&&\dfrac{dx}{dt}=\bar{a}^{*}_{00}+\bar{a}^{*}_{10}x+\bar{a}^{*}_{01}y+\bar{a}^{*}_{20}x^{2}+\bar{a}^{*}_{11}xy+\bar{a}^{*}_{30}x^{3}
+\bar{a}^{*}_{21}x^{2}y\\
&&  \ \ \ \ \ \ \ \ +\bar{a}^{*}_{40}x^{4}+\bar{a}^{*}_{31}x^{3}y+o(|x,y|^{4}),\\
 &&\dfrac{dy}{dt}=\bar{b}^{*}_{00}+\bar{b}^{*}_{10}x+\bar{b}^{*}_{01}y+\bar{b}^{*}_{20}x^{2}+\bar{b}^{*}_{11}xy+\bar{b}^{*}_{02}y^{2}+\bar{b}^{*}_{30}x^{3}
\\
&& \ \ \ \ \ \ \ \ +\bar{b}^{*}_{21}x^{2}y+\bar{b}^{*}_{12}xy^{2}+\bar{b}^{*}_{40}x^{4}+\bar{b}^{*}_{31}x^{3}y+\bar{b}^{*}_{22}x^{2}y^{2}+o(|x,y|^{4}).
\end{array}
\end{equation}
Here $\bar{a}^{*}_{ij}, \bar{b}^{*}_{ij}$ and all parameters in subsequent transformations are given below and note that $\bar{a}_{00}^{*}=\bar{b}^{*}_{00}=0$ when $\kappa=0$.

System (\ref{c8}) is transformed by the change of variables
\begin{eqnarray*}
X=x, \ Y=\frac{dx}{dt},
\end{eqnarray*}
into
\begin{equation}\label{a5a5}
\begin{array}{rrl}
&&\dfrac{dx}{dt}=y, \vspace{2mm}\\
&&\dfrac{dy}{dt}=\bar{c}^{*}_{00}+\bar{c}^{*}_{10}x+\bar{c}^{*}_{01}y+\bar{c}^{*}_{20}x^{2}+\bar{c}^{*}_{11}xy+\bar{c}^{*}_{02}y^{2}
+\bar{c}^{*}_{30}x^{3}+\bar{c}^{*}_{21}x^{2}y+\bar{c}^{*}_{12}xy^{2}\\
&& \ \ \ \ \ \ \ \ +\bar{c}^{*}_{40}x^{4}+\bar{c}^{*}_{31}x^{3}y+\bar{c}^{*}_{22}x^{2}y^{2}+O(|x,y|^{4}).
\end{array}
\end{equation}
Note that $\overline{c}_{00}=\overline{c}_{10}=\overline{c}_{01}=\overline{c}_{11}=0$ when $\kappa=0$.

Using the method in Li et al. \cite{li2015codimension}, we transform (\ref{a5a5}) into system (\ref{b1}) to state the existence of the Bogdanov-Takens bifurcation of codimension 3 in the following steps:

($I$) Eliminating the $y^{2}$-term from system (\ref{a5a5}) when $\kappa=0$ by introducing the transformation $x=X+\frac{\bar{c}_{02}}{2}X^{2}, y=Y+\bar{c}_{02}XY$,   we have
\begin{equation}\label{a8}
\begin{array}{rrl}
&&\dfrac{dx}{dt}=y, \vspace{2mm}\\
&&\dfrac{dy}{dt}=\bar{d}^{*}_{00}+\bar{d}^{*}_{10}x+\bar{d}^{*}_{01}y+\bar{d}^{*}_{20}x^{2}+\bar{d}^{*}_{11}xy+\bar{d}^{*}_{30}x^{3}+\bar{d}^{*}_{21}x^{2}y
+\bar{d}^{*}_{12}xy^{2}\\
&& \ \ \ \ \ \ \ \ +\bar{d}^{*}_{40}x^{4}+\bar{d}^{*}_{31}x^{3}y+\bar{d}^{*}_{22}x^{2}y^{2}
+O(|x,y|^{4}).
\end{array}
\end{equation}
Note that $\bar{d}_{00}=\bar{d}_{10}=\bar{d}_{01}=\bar{d}_{11}=0$ when $\kappa=0$.\\

($II$) Eliminating the $xy^{2}$-term in system (\ref{a8}) when $\kappa=0$ and making the transformation $x=X+\frac{\bar{d}^{*}_{12}}{6}X^{3}, y=Y+\frac{\bar{d}^{*}_{12}}{2}X^{2}Y$,   we have
\begin{equation}\label{a9a9}
\begin{array}{rrl}
&&\dfrac{dx}{dt}=y, \vspace{2mm}\\
&&\dfrac{dy}{dt}=\bar{e}^{*}_{00}+\bar{e}^{*}_{10}x+\bar{e}^{*}_{01}y
+\bar{e}^{*}_{20}x^{2}+\bar{e}^{*}_{11}xy+\bar{e}^{*}_{30}x^{3}+\bar{e}^{*}_{21}x^{2}y
+\bar{e}^{*}_{40}x^{4}\\
&& \ \ \ \ \ \ \ \ +\bar{e}^{*}_{31}x^{3}y+\bar{e}^{*}_{22}x^{2}y^{2}+O(|x,y|^{4}),
\end{array}
\end{equation}
where $\bar{e}^{*}_{00}=\bar{e}^{*}_{10}=\bar{e}^{*}_{01}=\bar{e}^{*}_{11}=0$ when $\kappa=0$.\\

($III$) Eliminate the $x^{2}y^{2}$-term in system (\ref{a9a9}) when $\kappa=0$. Transformation $x=X+\frac{\bar{e}_{22}}{12}X^{4}, y=Y+\frac{\bar{e}_{22}}{3}X^{3}Y$ brings the above system to
\begin{equation}\label{a100}
\begin{array}{rrl}
&&\dfrac{dx}{dt}=y, \vspace{2mm}\\
&& \dfrac{dy}{dt}=\bar{f}^{*}_{00}+\bar{f}^{*}_{10}x+\bar{f}^{*}_{01}y+\bar{f}^{*}_{20}x^{2}
+\bar{f}^{*}_{11}xy+\bar{f}^{*}_{30}x^{3}+\bar{f}^{*}_{21}x^{2}y+\bar{f}^{*}_{40}x^{4}\\
&& \ \ \ \ \ \ \ \ +\bar{f}^{*}_{31}x^{3}y+O(|x,y|^{4}),
\end{array}
\end{equation}
where $\bar{f}^{*}_{00}=\bar{f}^{*}_{10}=\bar{f}^{*}_{01}=\bar{f}^{*}_{11}=0$ when $\kappa=0$.\\

($IV$) Eliminate the $x^{3}$ and $x^{4}$-term in system (\ref{a100}) when $\kappa=0$. We   easily know that
$\bar{f}^{*}_{20}=\frac{s^2 (x^{*2}-a)}{2 x^{*} (a+x^{*} (c+x^{*}))}+O(\kappa)$, $\bar{f}^{*}_{20} \neq 0$ for small $\kappa$. It follows from
\begin{eqnarray*}
x=X-\frac{\bar{f}^{*}_{30}}{4\bar{f}^{*}_{20}}X^{2}+\frac{15\bar{f}^{*2}_{30}
-16\bar{f}^{*}_{20}\bar{f}^{*}_{40}}{80\bar{f}^{*2}_{20}}X^{3}, \ \
y=Y, \ \
t=\left(1-\frac{\bar{f}^{*}_{30}}{2\bar{f}^{*}_{20}}X+\frac{45\bar{f}^{*2}_{30}
-48\bar{f}^{*}_{20}\bar{f}^{*}_{40}}{80\bar{f}^{*2}_{20}}X^{2}\right)\tau,
\end{eqnarray*}
that
\begin{equation}\label{a11a11}
\begin{array}{rrl}
&&\dfrac{dx}{dt}=y, \vspace{2mm}\\
&&\dfrac{dy}{dt}=\bar{g}^{*}_{00}+\bar{g}^{*}_{10}x+\bar{g}^{*}_{01}y
+\bar{g}^{*}_{20}x^{2}+\bar{g}^{*}_{11}xy+\bar{g}^{*}_{30}x^{3}
+\bar{g}^{*}_{21}x^{2}y+\bar{g}^{*}_{40}x^{4}\\
&& \ \ \ \ \ \ \ \ +\bar{g}^{*}_{31}x^{3}y+O(|x,y|^{4}),
\end{array}
\end{equation}
where $\bar{g}^{*}_{00}=\bar{g}^{*}_{10}=\bar{g}^{*}_{01}=\bar{g}^{*}_{11}
=\bar{g}^{*}_{30}=\bar{g}^{*}_{40}=0$ when $\kappa=0$.\\

($V$) Eliminate the $x^{2}y$-term in system (\ref{a11a11}) when $\kappa=0$. It's easy to know that $\bar{g}^{*}_{20}=\frac{s^2 (x^{*2}-a)}{2 x^{*} (a+x^{*} (c+x^{*}))}+O(\kappa)$, $g^{*}_{20} \neq 0$ for small $\kappa$. Setting
\begin{eqnarray*}
x=X,\
y=Y+\frac{\bar{g}^{*}_{21}}{3\bar{g}^{*}_{20}}Y^{2}+\frac{\bar{g}^{*2}_{21}}{36\bar{g}^{*2}_{20}}Y^{3}, \
\tau=\left(1+\frac{\bar{g}^{*}_{21}}{3\bar{g}^{*}_{20}}Y+\frac{\bar{g}^{*2}_{21}}{36\bar{g}^{*2}_{20}}Y^{2}\right)t,
\end{eqnarray*}
we get an equivalent system to (\ref{a11a11}) as follows
\begin{equation}\label{a12}
\begin{array}{rrl}
&&\dfrac{dx}{dt}=y,  \vspace{2mm}\\
&&\dfrac{dy}{dt}=\bar{h}^{*}_{00}+\bar{h}^{*}_{10}x+\bar{h}^{*}_{01}y+\bar{h}^{*}_{20}x^{2}
+\bar{h}^{*}_{11}xy+\bar{h}^{*}_{31}x^{3}y+T_{1}(x,y,\kappa),
\end{array}
\end{equation}
where $\bar{h}^{*}_{00}=\bar{h}^{*}_{10}=\bar{h}^{*}_{01}=\bar{h}^{*}_{11}=0$ when $\kappa=0$, and $T_{1}(x,y,\kappa)$ possesses the property of (\ref{a13}).\\

($VI$) Change $\bar{g}^{*}_{20}$ and $g_{31}$ to 1 in system (\ref{a12}). A simple calculation shows that $\bar{h}^{*}_{20}=\bar{g}^{*}_{20} \neq 0$ and $\overline{h}_{31} \neq 0$ for small $\kappa$. Letting
\begin{eqnarray*}
x=\bar{h}^{*\frac{1}{5}}_{20}\bar{h}^{*-\frac{2}{5}}_{31}X,\ \
y=\bar{h}^{*\frac{4}{5}}_{20}\bar{h}^{*-\frac{3}{5}}_{31}Y,\ \
t=\bar{h}^{*-\frac{3}{5}}_{20}\bar{h}^{*\frac{1}{5}}_{31}\tau,
\end{eqnarray*}
model (\ref{a12}) can be expressed as follows
\begin{equation}\label{a14}
\begin{array}{rrl}
&&\dfrac{dx}{dt}=y,  \vspace{2mm}\\
&&\dfrac{dy}{dt}=\bar{j}^{*}_{00}+\bar{j}^{*}_{10}x+\bar{j}^{*}_{01}y+\bar{j}^{*}_{11}xy+x^{2}
+x^{3}y+T_{2}(x,y,\kappa),
\end{array}
\end{equation}
where $\bar{j}^*_{00}=\bar{j}^*_{10}=\bar{j}^*_{01}=\bar{j}^*_{11}=0$ when $\kappa=0$, and $T_{2}(x,y,\kappa)$ possesses the property of (\ref{a13}).\\

($VII$) Eliminate the $\bar{h}^{*}_{10}$-term in system (\ref{a14}). Finally, from
\begin{eqnarray*}
x=X-\frac{\bar{j}^{*}_{10}}{2},\ \
y=Y,
\end{eqnarray*}
we have that
\begin{equation}\label{a15}
\begin{array}{rrl}
&&\dfrac{dx}{dt}=y,  \vspace{2mm}\\
&&\dfrac{dy}{dt}=\bar{\psi}_{1}+\bar{\psi}_{2}y+\bar{\psi}_{3}xy+x^{2}+x^{3}y+T_{3}(x,y,\kappa),
\end{array}
\end{equation}
in which
\begin{eqnarray*}
\bar{\psi}_{1}=\bar{j}^{*}_{00}-\frac{1}{4}\bar{j}^{*2}_{10}, \ \ \
\bar{\psi}_{2}=\bar{j}^{*}_{01}-\frac{\bar{j}^{*3}_{10}}{8}-\frac{\bar{j}^{*}_{11} \bar{j}^{*}_{10}}{2}, \ \ \
\bar{\psi}_{3}=\bar{j}^{*}_{11}+\frac{3}{4}\bar{j}^{*2}_{10}.
\end{eqnarray*}
Note that
$\bar{\psi}_{1}=\bar{\psi}_{2}=\bar{\psi}_{3}=0$ when $\kappa=0$ and $T_{3}(x,y,\kappa)$ possesses the property of (\ref{a13}). Using the software Mathematica we obtain
\begin{eqnarray*}
\left|\frac{\partial(\bar{\psi}_{1},\bar{\psi}_{2},\bar{\psi}_{3})}{(\kappa_{1},\kappa_{2},\kappa_{3})}\right|_{\kappa=0}
=\frac{2^{\frac{3}{5}} q N_1 N_2 N_3^{\frac{4}{5}}}{s^2 (x^{*2}-a)^{\frac{21}{5}} (b+x^{*})^{\frac{23}{5}} (a+c x^{*}+x^{*2})^{\frac{33}{5}}},
\end{eqnarray*}
in which
\begin{eqnarray*}
&&N_1=a^2+3 a x^{*} (c+2 x^{*})-x^{*3} (c+3 x^{*}),\\
&&N_2=-a^3 (b-x^{*}) (2 b+3 x^{*})+a^2 x^{*} (-7 b^2 c+x^{*2} (b+5 c)-6 b x^{*} (b+c)+9 x^{*3})-a x^{*2} (2 x^{*2}\\
&& \ \ \ \ \ \ \ \times (15 b^2+20 b c+c^2)+9 b^2 c^2+3 x^{*3} (17 b+4 c)+b c x^{*} (28 b+13 c)+23 x^{*4})+x^{*4} (b^2 c^2\\
&& \ \ \ \ \ \ \ +x^{*3} (11 b-c)+6 b x^{*2} (b+c)+3 b c x^{*} (b+c)+3 x^{*4}),\\
&&N_3=a^6 (2 b^2-x^{*2})+a^5 x^{*} (2 b^2 (5 c+3 x^{*})+b x^{*} (c-20 x^{*})-9 x^{*2} (c+4 x^{*}))+a^4 x^{*2} (3 x^{*2}(24\\
&& \ \ \ \ \ \ \ \times b^2+35 b c-3 c^2)+20 b^2 c^2+x^{*3} (184 b-63 c)+b c x^{*} (56 b+29 c)-17 x^{*4})+a^3 x^{*4}(-2\\
&& \ \ \ \ \ \ \ \times b^2 (7 c^2+14 c x^{*}+30 x^{*2})+b (43 c^3+280 c^2 x^{*}+670 c x^{*2}+424 x^{*3})+x^{*} (11 c^3+78 c^2 x^{*}\\
&& \ \ \ \ \ \ \ +330 c x^{*2}+344 x^{*3}))-a^2 x^{*4} (24 b^2 c^4+2 x^{*4} (275 b^2+569 b c+18 c^2)+c x^{*3}(952 b^2+450\\
&& \ \ \ \ \ \ \ \times b c+9 c^2)+b c^3 x^{*} (194 b+9 c)+b c^2 x^{*2} (618 b+91 c)+6 x^{*5} (160 b+39 c)+363 x^{*6})+a\\
&& \ \ \ \ \ \ \ \times x^{*6} (8 b^2 c^4+5 x^{*4} (62 b^2+45 b c-6 c^2)+c x^{*3} (434 b^2+40 b c+c^2)+2 b c^3 x^{*}(34 b+3 c)+\\
&& \ \ \ \ \ \ \ 7 b c^2 x^{*2} (34 b+3 c)+x^{*5} (332 b-33 c)+76 x^{*6})+x^{*9} (-3 x^{*3} (12 b^2-3 b c+c^2)+c x^{*2}\times\\
&& \ \ \ \ \ \ \ (-32 b^2+5 b c-3 c^2)-b c^3 (2 b+c)-5 b c^2 x^{*} (2 b+c)+3 x^{*4} (3 c-8 b)-3 x^{*5}).
\end{eqnarray*}

Conditions $b\neq b_{\pm}, b_{\pm}^{*}$ and $m=\frac{s (b+x^{*})^3 (a^2+3 a x^{*} (c+2 x^{*})-x^{*3}(c+3 x^{*}))}{2 x^{*2} (a+x^{*} (c+x^{*}))^2}>0$ show that $N_1, N_2, N_3 \neq 0$ and obviously we have $\left|\frac{\partial(\bar{\psi}_{1},\bar{\psi}_{2},\bar{\psi}_{3})}{(\kappa_{1},\kappa_{2},\kappa_{3})}\right|
_{\kappa=0}\neq0$. Moreover, system (\ref{a15}) is equivalent to~(\ref{a7}). Based on Li et al. \cite{li2015codimension}, we can conclude that model (\ref{a15}) is the universal unfolding of a Bogdanov-Takens singularity (cusp case) of codimension 3. The remainder term $R_{3}(x,y,\kappa)$ satisfying the property of (\ref{a13}) has no influence on the bifurcation phenomena. The dynamics of system (\ref{A1}) in a small neighborhood of the positive equilibrium $E^{*}=(x^{*}, nx^{*})$ as $(m,q,s)$ varies  near $(m+\kappa_{1}, q+\kappa_{2}, s+\kappa_{3})$ are equivalent to system (\ref{a15}) in a small neighborhood of $(0, 0, 0)$ as $(\bar{\psi}_{1}, \bar{\psi}_{2}, \bar{\psi}_{3})$ varies near $(0, 0, 0)$. This completes the proof.
\end{proof}


The following are the remaining coefficients in the previous proof.

\begin{eqnarray*}
&&\bar{a}^{*}_{00}=\frac{1}{2 x^{*} (b+\kappa_2+x^{*})} \left(-2 \kappa _1 x^{*2}+\frac{\kappa _2 s (b+x^{*})^2 (a^2+3 a x^{*} (c+2 x^{*})-x^{*3} (c+3 x^{*}))}{(a+x^{*} (c+x^{*}))^2}\right),\\
&&\bar{a}^{*}_{10}=\frac{1}{2 (b+\kappa _2+x^{*})^2} (2 s (b+x^{*})^2-2 \kappa _1 (b+\kappa _2)+\frac{\kappa _2 s}{x^{*2} (a+x^{*} (c+x^{*}))^2} ((b+x^{*}) (a^2 (b^2+\\
&& \ \ \ \ \ \ \ \ 3 x^{*2})+3 a b^2 x^{*} (c+2 x^{*})+a x^{*3} (5 c+2 x^{*})-b^2 x^{*3} (c+3 x^{*})+x^{*4} (4 c^2+9 c x^{*}+7 x^{*2}))\\
&& \ \ \ \ \ \ \ \ +\kappa _2 (a^2 (b^2+b x^{*}+2 x^{*2})+a x^{*} (3 b^2 (c+2 x^{*})+3 b x^{*} (c+2 x^{*})+4 x^{*2} (c+x^{*}))-x^{*3} (b^2\\
&& \ \ \ \ \ \ \ \ \times(c+3 x^{*})-b x^{*} (c+3 x^{*})+2 x^{*} (c+x^{*})^2)))),\\
&&\bar{a}^{*}_{01}=-\frac{q x^{*}}{a+x^{*} (c+x^{*})},\\
&&\bar{a}^{*}_{20}=\frac{1}{2 (a+x^{*} (c+x^{*}))^2} (2 s (a (c+3 x^{*})-x^{*3})+\frac{s}{x^{*2}}(a^2 (x^{*}-b)-a x^{*} (3 b (c+2 x^{*})+x^{*} (c\\
&& \ \ \ \ \ \ \ \ +6 x^{*}))+x^{*3} (b (c+3 x^{*})+x^{*} (x^{*}-c)))+\frac{b+\kappa _2}{x^{*2} (b+\kappa _2+x^{*})^3} (s (b+x^{*})^3 (a^2+3 a x^{*} (c\\
&& \ \ \ \ \ \ \ \ +2 x^{*})-x^{*3} (c+3 x^{*}))+2 \kappa _1 x^{*2} (a+x^{*} (c+x^{*}))^2) ),\\
&&\bar{a}^{*}_{11}=\frac{q (x^{*2}-a)}{(a+x^{*} (c+x^{*}))^2},\\
&&\bar{a}^{*}_{30}=\frac{1}{2 (a+x^{*} (c+x^{*}))^3}(2 s (a^2-a (c^2+4 c x^{*}+6 x^{*2})+x^{*4})-\frac{(b+\kappa _2) (a+x^{*} (c+x^{*}))}{x^{*2} (b+\kappa _2+x^{*})^4}(s\\
&& \ \ \ \ \ \ \ \ \times (b+x^{*})^3 (a^2+3 a x^{*} (c+2 x^{*})+x^{*3} (-(c+3 x^{*})))+2 \kappa _1 x^{*2} (a+x^{*} (c+x^{*}))^2)),\\
&&\bar{a}^{*}_{21}=\frac{q (a (c+3 x^{*})-x^{*3})}{(a+x^{*} (c+x^{*}))^3},\\
&&\bar{a}^{*}_{40}=\frac{b+\kappa _2}{2 x^{*2} (a+x^{*} (c+x^{*}))^2 (b+\kappa _2+x^{*})^5}(s (b+x^{*})^3 (a^2+3 a x^{*}(c+2 x^{*}) -x^{*3} (c+3x^{*}))\\
&& \ \ \ \ \ \ \ \ +2 \kappa _1 x^{*2} (a+x^{*} (c+x^{*}))^2)-\frac{s}{(a+x^{*} (c+x^{*}))^4}(a^2 (2 c+5 x^{*})-a (c^3+5 c^2 x^{*}+\\
&& \ \ \ \ \ \ \ \ 10 c x^{*2}+10 x^{*3})+x^{*5}),\\
&&\bar{a}^{*}_{31}=\frac{q (a^2-a (c^2+4 c x^{*}+6 x^{*2})+x^{*4})}{(a+x^{*} (c+x^{*}))^4};\
\bar{b}^{*}_{00}=\frac{\kappa _3 s^2 x^{*} (a+x^{*} (c+x^{*}))}{s (a+x^{*} (c+x^{*}))+\kappa _3 q x^{*}},
\end{eqnarray*}

\begin{eqnarray*}
&&\bar{b}^{*}_{10}=\frac{s^3 (a+x^{*} (c+x^{*}))^2}{q x^{*} (s (a+x^{*} (c+x^{*}))+\kappa _3 q x^{*})},\
\bar{b}^{*}_{01}=\frac{s (\kappa _3 q x^{*}-s (a+x^{*} (c+x^{*})))}{s (a+x^{*} (c+x^{*}))+\kappa _3 q x^{*}},\\
&&\bar{b}^{*}_{20}=-\frac{s^3 (a+x^{*} (c+x^{*}))^2}{q x^{*2} (s (a+x^{*} (c+x^{*}))+\kappa _3 q x^{*})},\
\bar{b}^{*}_{11}=\frac{2 s^2 (a+x^{*} (c+x^{*}))}{x^{*} (s (a+x^{*} (c+x^{*}))+\kappa _3 q x^{*})},\\
&&\bar{b}^{*}_{02}=-\frac{q s}{s (a+x^{*} (c+x^{*}))+\kappa _3 q x^{*}},\
\bar{b}^{*}_{30}=\frac{s^3 (a+x^{*} (c+x^{*}))^2}{q x^{*3} (s (a+x^{*} (c+x^{*}))+\kappa _3 q x^{*})},\\
&&\bar{b}^{*}_{21}=-\frac{2 s^2 (a+x^{*} (c+x^{*}))}{x^{*2} (s (a+x^{*} (c+x^{*}))+\kappa _3 q x^{*})},\
\bar{b}^{*}_{12}=\frac{q s}{x^{*} (s (a+x^{*} (c+x^{*}))+\kappa _3 q x^{*})},\\
&&\bar{b}^{*}_{40}=-\frac{s^3 (a+x^{*} (c+x^{*}))^2}{q x^{*4} (s (a+x^{*} (c+x^{*}))+\kappa _3 q x^{*})},\
\bar{b}^{*}_{31}=\frac{2 s^2 (a+x^{*} (c+x^{*}))}{x^{*3} (s (a+x^{*} (c+x^{*}))+\kappa _3 q x^{*})},\\
&&\bar{b}^{*}_{22}=-\frac{q s}{x^{*2} (s (a+x^{*} (c+x^{*}))+\kappa _3 q x^{*})};\
\bar{c}^{*}_{00}=\frac{\bar{a}_{00}^2 \bar{b}^{*}_{02}}{\bar{a}_{01}}-\bar{a}^{*}_{00} \bar{b}^{*}_{01}+\bar{a}^{*}_{01} \bar{b}^{*}_{00},\\
&&\bar{c}^{*}_{10}=\frac{\bar{a}^{*2}_{00} \bar{b}^{*}_{12}}{\bar{a}^{*}_{01}}-\bar{a}^{*}_{00} \bar{b}^{*}_{11}+\bar{a}^{*}_{01} \bar{b}^{*}_{10}+\bar{a}^{*}_{10} \left(\frac{2 \bar{a}^{*}_{00} \bar{b}^{*}_{02}}{\bar{a}^{*}_{01}}-\bar{b}^{*}_{01}\right)+\bar{a}^{*}_{11} \left(\bar{b}^{*}_{00}-\frac{\bar{a}^{*2}_{00} \bar{b}^{*}_{02}}{\bar{a}^{*2}_{01}}\right),\\
&&\bar{c}^{*}_{01}=-\frac{\bar{a}^{*}_{00} (\bar{a}^{*}_{11}+2 \bar{b}^{*}_{02})}{\bar{a}^{*}_{01}}+\bar{a}^{*}_{10}+\bar{b}^{*}_{01},\\
&&\bar{c}^{*}_{20}=\frac{\bar{a}^{*2}_{00}\bar{a}^{*2}_{11}\bar{b}^{*}_{02}}{\bar{a}^{*3}_{01}}+\bar{a}^{*}_{11} \bar{b}^{*}_{10}+\bar{a}^{*}_{21} \bar{b}^{*}_{00}-\bar{a}^{*}_{20} \bar{b}^{*}_{01}-\bar{a}^{*}_{10} \bar{b}^{*}_{11}+\bar{a}^{*}_{01} \bar{b}^{*}_{20}-\bar{a}^{*}_{00} \bar{b}^{*}_{21}-\frac{\bar{a}^{*}_{00}}{\bar{a}^{*2}_{01}} (2 \bar{a}^{*}_{10} \bar{a}^{*}_{11} \bar{b}^{*}_{02}+\\
&& \ \ \ \ \ \ \ \ \bar{a}^{*}_{00} (\bar{a}^{*}_{21} \bar{b}^{*}_{02}+\bar{a}^{*}_{11} \bar{b}^{*}_{12}))+\frac{1}{\bar{a}^{*}_{01}}(\bar{a}^{*2}_{00} \bar{b}^{*}_{22}+2 \bar{a}^{*}_{20} \bar{a}^{*}_{00} \bar{b}^{*}_{02}+2 \bar{a}^{*}_{10} \bar{a}^{*}_{00} \bar{b}^{*}_{12}+\bar{a}^{*2}_{10} \bar{b}^{*}_{02}),\\
&&\bar{c}^{*}_{11}=2 \bar{a}^{*}_{20}+\bar{b}^{*}_{11}+\frac{1}{\bar{a}^{*2}_{01}} (\bar{a}^{*}_{00} \bar{a}^{*}_{11} (\bar{a}^{*}_{11}+2 \bar{b}^{*}_{02})-\bar{a}^{*}_{01} (\bar{a}^{*}_{10} (\bar{a}^{*}_{11}+2 \bar{b}^{*}_{02})+2 \bar{a}^{*}_{00} (\bar{a}^{*}_{21}+\bar{b}^{*}_{12}))),\\
&&\bar{c}^{*}_{02}=\frac{\bar{a}^{*}_{11}+\bar{b}^{*}_{02}}{\bar{a}^{*}_{01}},\\
&&\bar{c}^{*}_{30}=-\frac{\bar{a}^{*2}_{00} \bar{a}^{*3}_{11} \bar{b}^{*}_{02}}{\bar{a}^{*4}_{01}}+\bar{a}^{*}_{11} \bar{b}^{*}_{20}+\bar{a}^{*}_{31} \bar{b}^{*}_{00}-\bar{a}^{*}_{30} \bar{b}^{*}_{01}+\bar{a}^{*}_{21} \bar{b}^{*}_{10}-\bar{a}^{*}_{20} \bar{b}^{*}_{11}-\bar{a}^{*}_{10} \bar{b}^{*}_{21}+\bar{a}^{*}_{01} \bar{b}^{*}_{30}-\bar{a}^{*}_{00} \bar{b}^{*}_{31}\\
&& \ \ \ \ \ \ \ \ +\frac{\bar{a}^{*}_{00} \bar{a}^{*}_{11}}{\bar{a}^{*3}_{01}} (2 (\bar{a}^{*}_{10} \bar{a}^{*}_{11}+\bar{a}^{*}_{00} \bar{a}^{*}_{21}) \bar{b}^{*}_{02}+\bar{a}^{*}_{00} \bar{a}^{*}_{11} \bar{b}^{*}_{12})+\frac{1}{\bar{a}^{*}_{01}}(\bar{a}^{*2}_{10} \bar{b}^{*}_{12}+2 \bar{a}^{*}_{10} (\bar{a}^{*}_{20} \bar{b}^{*}_{02}+\bar{a}^{*}_{00} \bar{b}^{*}_{22})\\
&& \ \ \ \ \ \ \ \ +2 \bar{a}^{*}_{00} (\bar{a}^{*}_{30} \bar{b}^{*}_{02}+\bar{a}^{*}_{20} \bar{b}^{*}_{12}))-\frac{1}{\bar{a}^{*2}_{01}} (\bar{a}^{*}_{11} \bar{a}^{*2}_{10} \bar{b}^{*}_{02}+2 \bar{a}^{*}_{00} \bar{a}^{*}_{10} (\bar{a}^{*}_{21} \bar{b}^{*}_{02}+\bar{a}^{*}_{11} \bar{b}^{*}_{12})+\bar{a}^{*}_{00} (\bar{a}^{*}_{00} (\bar{a}^{*}_{31} \bar{b}^{*}_{02}\\
&& \ \ \ \ \ \ \ \ +\bar{a}^{*}_{21} \bar{b}^{*}_{12})+\bar{a}^{*}_{11} (2 \bar{a}^{*}_{20} \bar{b}^{*}_{02}+\bar{a}^{*}_{00} \bar{b}^{*}_{22}))),\\
&&\bar{c}^{*}_{21}=3 \bar{a}^{*}_{30}+\bar{b}^{*}_{21}+\frac{1}{\bar{a}^{*3}_{01}} {(-\bar{a}^{*2}_{01}(\bar{a}^{*}_{20} (\bar{a}^{*}_{11}+2 \bar{b}^{*}_{02})+2 \bar{a}^{*}_{10} (\bar{a}^{*}_{21}+\bar{b}^{*}_{12})+\bar{a}^{*}_{00} (3 \bar{a}^{*}_{31}+2 \bar{b}^{*}_{22}))}+\bar{a}^{*}_{01}\\
&& \ \ \ \ \ \ \ \ \times (\bar{a}^{*}_{10} \bar{a}^{*}_{11} (\bar{a}^{*}_{11}+2 \bar{b}^{*}_{02})+\bar{a}^{*}_{00} (\bar{a}^{*}_{21} (3 \bar{a}^{*}_{11}+2 \bar{b}^{*}_{02})+2 \bar{a}^{*}_{11} \bar{b}^{*}_{12}))-\bar{a}^{*}_{00} \bar{a}^{*2}_{11} (\bar{a}^{*}_{11}+2 \bar{b}^{*}_{02})),\\
&&\bar{c}^{*}_{12}=\frac{1}{\bar{a}^{*2}_{01}}(\bar{a}^{*}_{01} (2 \bar{a}^{*}_{21}+\bar{b}^{*}_{12})-\bar{a}^{*}_{11} (\bar{a}^{*}_{11}+\bar{b}^{*}_{02})),
\end{eqnarray*}
\begin{eqnarray*}
&&\bar{c}^{*}_{40}=\frac{\bar{a}^{*2}_{00} \bar{a}^{*4}_{11} \bar{b}^{*}_{02}}{\bar{a}^{*5}_{01}}+\bar{a}^{*}_{11} \bar{b}^{*}_{30}-\bar{a}^{*}_{40} \bar{b}^{*}_{01}+\bar{a}^{*}_{31} \bar{b}^{*}_{10}-\bar{a}^{*}_{30} \bar{b}^{*}_{11}+\bar{a}^{*}_{21} \bar{b}^{*}_{20}-\bar{a}^{*}_{20} \bar{b}^{*}_{21}-\bar{a}^{*}_{10} \bar{b}^{*}_{31}+\bar{a}^{*}_{01} \bar{b}^{*}_{40}-\\
&& \ \ \ \ \ \ \ \ \frac{\bar{a}^{*}_{00} \bar{a}^{*2}_{11}}{\bar{a}^{*4}_{01}} (2 \bar{a}^{*}_{10} \bar{a}^{*}_{11} \bar{b}^{*}_{02}+\bar{a}^{*}_{00} (3 \bar{a}^{*}_{21} \bar{b}^{*}_{02}+\bar{a}^{*}_{11} \bar{b}^{*}_{12})) +\frac{1}{\bar{a}^{*}_{01}} (\bar{a}^{*2}_{10} \bar{b}^{*}_{22}+2 \bar{a}^{*}_{30} \bar{a}^{*}_{10} \bar{b}^{*}_{02}+\bar{a}^{*2}_{20} \bar{b}^{*}_{02}+2\\
&& \ \ \ \ \ \ \ \ \times \bar{a}^{*}_{00} (\bar{a}^{*}_{40} \bar{b}^{*}_{02}+\bar{a}^{*}_{30} \bar{b}^{*}_{12})+2 \bar{a}^{*}_{20} (\bar{a}^{*}_{10} \bar{b}^{*}_{12}+\bar{a}^{*}_{00} \bar{b}^{*}_{22}))+\frac{1}{\bar{a}^{*3}_{01}} (\bar{a}^{*2}_{10} \bar{a}^{*2}_{11} \bar{b}^{*}_{02}+2 \bar{a}^{*}_{00} \bar{a}^{*}_{10} \bar{a}^{*}_{11} (2 \bar{a}^{*}_{21} \bar{b}^{*}_{02}\\
&& \ \ \ \ \ \ \ \ +\bar{a}^{*}_{11} \bar{b}^{*}_{12})+\bar{a}^{*}_{00} (\bar{a}^{*2}_{11} (2 \bar{a}^{*}_{20} \bar{b}^{*}_{02}+\bar{a}^{*}_{00} \bar{b}^{*}_{22})+2 \bar{a}^{*}_{00} \bar{a}^{*}_{11} (\bar{a}^{*}_{31} \bar{b}^{*}_{02}+\bar{a}^{*}_{21} \bar{b}^{*}_{12})+\bar{a}^{*}_{00} \bar{a}^{*2}_{21} \bar{b}^{*}_{02}))-\frac{1}{\bar{a}^{*2}_{01}}\\
&& \ \ \ \ \ \ \ \ \times (\bar{a}^{*2}_{10} (\bar{a}^{*}_{21} \bar{b}^{*}_{02}+\bar{a}^{*}_{11} \bar{b}^{*}_{12})+2 \bar{a}^{*}_{10} (\bar{a}^{*}_{00} (\bar{a}^{*}_{31} \bar{b}^{*}_{02}+\bar{a}^{*}_{21} \bar{b}^{*}_{12})+\bar{a}^{*}_{11} (\bar{a}^{*}_{20} \bar{b}^{*}_{02}+\bar{a}^{*}_{00} \bar{b}^{*}_{22}))+\bar{a}^{*}_{00} (2 \bar{a}^{*}_{11}\\
&& \ \ \ \ \ \ \ \ \times  \bar{a}^{*}_{30} \bar{b}^{*}_{02}+2 \bar{a}^{*}_{20} (\bar{a}^{*}_{21} \bar{b}^{*}_{02}+\bar{a}^{*}_{11} \bar{b}^{*}_{12})+\bar{a}^{*}_{00} (\bar{a}^{*}_{31} \bar{b}^{*}_{12}+\bar{a}^{*}_{21} \bar{b}^{*}_{22}))),\\
&&\bar{c}^{*}_{31}=4 \bar{a}^{*}_{40}+{\bar{b}^{*}_{31}}+\frac{1}{\bar{a}^{*4}_{01}}
{(-\bar{a}^{*3}_{01}(\bar{a}^{*}_{30} (\bar{a}^{*}_{11}+2 \bar{b}^{*}_{02})+2 \bar{a}^{*}_{20} (\bar{a}^{*}_{21}+\bar{b}^{*}_{12})+\bar{a}^{*}_{10} (3 \bar{a}^{*}_{31}+2 \bar{b}^{*}_{22}))}+\bar{a}^{*2}_{01}\\
&& \ \ \ \ \ \ \ \ \times (\bar{a}^{*}_{11} (2 \bar{a}^{*}_{20} \bar{b}^{*}_{02}+\bar{a}^{*}_{10} (3 \bar{a}^{*}_{21}+2 \bar{b}^{*}_{12})+2 \bar{a}^{*}_{00} (2 \bar{a}^{*}_{31}+\bar{b}^{*}_{22}))+2 (\bar{a}^{*}_{10} \bar{a}^{*}_{21} \bar{b}^{*}_{02}+\bar{a}^{*}_{00} (\bar{a}^{*}_{31} \bar{b}^{*}_{02}+\bar{a}^{*}_{21}\\
&& \ \ \ \ \ \ \ \ \times (\bar{a}^{*}_{21}+\bar{b}^{*}_{12})))+\bar{a}^{*}_{20} \bar{a}^{*2}_{11})-\bar{a}^{*}_{11} \bar{a}^{*}_{01} (\bar{a}^{*}_{10} \bar{a}^{*}_{11} (\bar{a}^{*}_{11}+2 \bar{b}^{*}_{02})+2 \bar{a}^{*}_{00} (2 \bar{a}^{*}_{21} (\bar{a}^{*}_{11}+\bar{b}^{*}_{02})+\bar{a}^{*}_{11}\times\\
&& \ \ \ \ \ \ \ \ \bar{b}^{*}_{12}))+\bar{a}^{*}_{00} \bar{a}^{*3}_{11} (\bar{a}^{*}_{11}+2 \bar{b}^{*}_{02})),\\
&&\bar{c}^{*}_{22}=\frac{1}{\bar{a}^{*3}_{01}} (\bar{a}^{*2}_{11} \bar{b}^{*}_{02}-\bar{a}^{*}_{01} \bar{a}^{*}_{11} (3 \bar{a}^{*}_{21}+\bar{b}^{*}_{12})+\bar{a}^{*}_{01} (\bar{a}^{*}_{01} (3 \bar{a}^{*}_{31}+\bar{b}^{*}_{22})-\bar{a}^{*}_{21} \bar{b}^{*}_{02})+\bar{a}^{*3}_{11});\\
&&\bar{d}^{*}_{00}=\bar{c}^{*}_{00},\
\bar{d}^{*}_{10}=\bar{c}^{*}_{10}-\bar{c}^{*}_{00} \bar{c}^{*}_{02},\
\bar{d}^{*}_{01}=\bar{c}^{*}_{01},\
\bar{d}^{*}_{20}=\bar{c}^{*}_{00} \bar{c}^{*2}_{02}-\frac{\bar{c}^{*}_{10} \bar{c}^{*}_{02}}{2}+\bar{c}^{*}_{20},\
\bar{d}^{*}_{11}=\bar{c}^{*}_{11},\\
&&\bar{d}^{*}_{30}=\frac{1}{2} (\bar{c}^{*}_{10}-2 \bar{c}^{*}_{00} \bar{c}^{*}_{02}) \bar{c}^{*2}_{02}+\bar{c}^{*}_{30},\
\bar{d}^{*}_{21}=\frac{\bar{c}^{*}_{02} \bar{c}^{*}_{11}}{2}+\bar{c}^{*}_{21},\
\bar{d}^{*}_{12}=2 \bar{c}^{*2}_{02}+\bar{c}^{*}_{12},\\
&&\bar{d}^{*}_{40}=\bar{c}^{*}_{00} \bar{c}^{*4}_{02}+\frac{1}{4} (\bar{c}^{*}_{02} (\bar{c}^{*}_{20}-2 \bar{c}^{*}_{02} \bar{c}^{*}_{10})+2 \bar{c}^{*}_{30}) \bar{c}^{*}_{02}+\bar{c}^{*}_{40},\
\bar{d}^{*}_{31}=\bar{c}^{*}_{02} \bar{c}^{*}_{21}+\bar{c}^{*}_{31},\\
&&\bar{d}^{*}_{22}=-\bar{c}^{*3}_{02}+\frac{3 \bar{c}^{*}_{12} \bar{c}^{*}_{02}}{2}+\bar{c}^{*}_{22};\
\bar{e}^{*}_{00}=\bar{d}^{*}_{00},\
\bar{e}^{*}_{10}=\bar{d}^{*}_{10},\
\bar{e}^{*}_{01}=\bar{d}^{*}_{01},
\bar{e}^{*}_{20}=\bar{d}^{*}_{20}-\frac{\bar{d}^{*}_{00} \bar{d}^{*}_{12}}{2},\\
&&\bar{e}^{*}_{11}=\bar{d}^{*}_{11},\
\bar{e}^{*}_{30}=\bar{d}^{*}_{30}-\frac{\bar{d}^{*}_{10} \bar{d}^{*}_{12}}{3},\
\bar{e}^{*}_{21}=\bar{d}^{*}_{21},\
\bar{e}^{*}_{40}=\frac{1}{4} \bar{d}^{*}_{00} \bar{d}^{*2}_{12}-\frac{\bar{d}^{*}_{20} \bar{d}^{*}_{12}}{6}+\bar{d}^{*}_{40},\\
&&\bar{e}^{*}_{31}=\frac{\bar{d}^{*}_{11} \bar{d}^{*}_{12}}{6}+\bar{d}^{*}_{31},\
\bar{e}^{*}_{22}=\bar{d}^{*}_{22};\
\bar{f}^{*}_{00}=\bar{e}^{*}_{00},\
\bar{f}^{*}_{10}=\bar{e}^{*}_{10},\
\bar{f}^{*}_{01}=\bar{e}^{*}_{01},\
\bar{f}^{*}_{20}=\bar{e}^{*}_{20},\
\bar{f}^{*}_{11}=\bar{e}^{*}_{11},\\
&&\bar{f}^{*}_{30}=\bar{e}^{*}_{30}-\frac{\bar{e}^{*}_{00} \bar{e}^{*}_{22}}{3},\
\bar{f}^{*}_{21}=\bar{e}^{*}_{21},\
\bar{f}^{*}_{40}=\bar{e}^{*}_{40}-\frac{\bar{e}^{*}_{10} \bar{e}^{*}_{22}}{4},\
\bar{f}^{*}_{31}=\bar{e}^{*}_{31};\
\bar{g}^{*}_{00}=\bar{f}^{*}_{00},\\
&&\bar{g}^{*}_{10}=\bar{f}^{*}_{10}-\frac{\bar{f}^{*}_{00} \bar{f}^{*}_{30}}{2 \bar{f}^{*}_{20}},\
\bar{g}^{*}_{01}=\bar{f}^{*}_{01},\
\bar{g}^{*}_{20}=\frac{9 \bar{f}^{*}_{00} \bar{f}^{*2}_{30}}{16 \bar{f}^{*2}_{20}}+\bar{f}^{*}_{20}-\frac{3 (5 \bar{f}^{*}_{10} \bar{f}^{*}_{30}+4 \bar{f}^{*}_{00} \bar{f}^{*}_{40})}{20 \bar{f}^{*}_{20}},\\
&&\bar{g}^{*}_{11}=\bar{f}^{*}_{11}-\frac{\bar{f}^{*}_{01} \bar{f}^{*}_{30}}{2 \bar{f}^{*}_{20}},\
\bar{g}^{*}_{30}=\frac{\bar{f}^{*}_{10} (35 \bar{f}^{*2}_{30}-32 \bar{f}^{*}_{20} \bar{f}^{*}_{40})}{40 \bar{f}^{*2}_{20}},\\
&&\bar{g}^{*}_{21}=\bar{f}^{*}_{21}-\frac{3 (20 \bar{f}^{*}_{11} \bar{f}^{*}_{20} \bar{f}^{*}_{30}+\bar{f}^{*}_{01} (16 \bar{f}^{*}_{20} \bar{f}^{*}_{40}-15 \bar{f}^{*2}_{30}))}{80 \bar{f}^{*2}_{20}},\
\bar{g}^{*}_{40}=\frac{\bar{f}^{*}_{10} \bar{f}^{*}_{30} (16 \bar{f}^{*}_{20} \bar{f}^{*}_{40}-15 \bar{f}^{*2}_{30})}{64 \bar{f}^{*3}_{20}},\\
&&\bar{g}^{*}_{31}=\frac{7 \bar{f}^{*}_{11} \bar{f}^{*2}_{30}}{8 \bar{f}^{*2}_{20}}+\bar{f}^{*}_{31}-\frac{5 \bar{f}^{*}_{21} \bar{f}^{*}_{30}+4 \bar{f}^{*}_{11} \bar{f}^{*}_{40}}{5 \bar{f}^{*}_{20}};\
\bar{h}^{*}_{00}=\bar{g}^{*}_{00},\
\bar{h}^{*}_{10}=\bar{g}^{*}_{10},\
\bar{h}^{*}_{01}=\bar{g}^{*}_{01}-\frac{\bar{g}^{*}_{00} \bar{g}^{*}_{21}}{\bar{g}^{*}_{20}},\\
&&\bar{h}^{*}_{20}=\bar{g}^{*}_{20},\
\bar{h}^{*}_{11}=\bar{g}^{*}_{11}-\frac{\bar{g}^{*}_{10} \bar{g}^{*}_{21}}{\bar{g}^{*}_{20}},\
\bar{h}^{*}_{31}=\bar{g}^{*}_{31}-\frac{\bar{g}^{*}_{21} \bar{g}^{*}_{30}}{\bar{g}^{*}_{20}};\
\bar{j}^{*}_{00}=\bar{h}^{*}_{00} \bar{h}_{31}^{*\frac{4}{5}}  \bar{h}_{20}^{*-\frac{7}{5}},\\
&&\bar{j}^{*}_{10}=\bar{h}^{*}_{10} \bar{h}_{31}^{*\frac{2}{5}}  \bar{h}_{20}^{*-\frac{6}{5}},\
\bar{j}^{*}_{01}=\bar{h}^{*}_{01} \bar{h}_{31}^{*\frac{1}{5}}  \bar{h}_{20}^{*-\frac{3}{5}},\
\bar{j}^{*}_{11}=\bar{h}^{*}_{11} \bar{h}_{20}^{*-\frac{2}{5}} \bar{h}_{31}^{*-\frac{1}{5}}.
\end{eqnarray*}


\section{Appendix D: Coefficients in the proof of Theorem \ref{tha3}}\label{AppD}

\begin{eqnarray*}
&&\alpha_{10}=s, \
\alpha_{01}=-\frac{q x_{*}}{a+x_{*} (c+x_{*})},\\
&&\alpha_{20}=-\frac{1}{k}\frac{b}{x_{*} (b+x_{*}) (-a+b (c+2 x_{*})+x_{*}^2)}(a (k s+r x_{*})+x_{*} (c k (s-r)+2 c r x_{*}+k x_{*} (s\\
&& \ \ \ \ \ \ \ \ \ -2 r)+3 r x_{*}^2))+\frac{(x_{*}^3-a (c+3 x_{*})) (b (k s+r x_{*})+x_{*} (k (s-r)+2 r x_{*}))+r)}{x_{*} (a+x_{*} (c+x_{*})) (-a+b (c+2x_{*})+x_{*}^2)},\\
&&\alpha_{11}=\frac{q (x_{*}^2-a)}{(a+x_{*} (c+x_{*}))^2},\\
&&\alpha_{30}=\frac{1}{k x_{*} (-a+b (c+2 x_{*})+x_{*}^2)}\bigg(\frac{1}{(a+x_{*} (c+x_{*}))^2}{(a^2-a (c^2+4 c x_{*}+6 x_{*}^2)+x_{*}^4) (b (k s+} \bigg.\\
&& \ \ \ \ \ \ \ \ \ {r x_{*})+x_{*} (k (s-r)+2 r x_{*}))}+\frac{b}{(b+x_{*})^2}(a (k s+r x_{*})+x_{*} (c k (s-r)+2 c r x_{*}+k x_{*}\\
&& \ \ \ \ \ \ \ \ \bigg.\times (s-2 r)+3 r x_{*}^2))\bigg),\\
&&\alpha_{21}=\frac{q (a (c+3 x_{*})-x_{*}^3)}{(a+x_{*} (c+x_{*}))^3},\\
&&\alpha_{40}=-\frac{1}{k x_{*} (-a+b (c+2 x_{*})+x_{*}^2)}\bigg((\frac{1}{(a+x_{*} (c+x_{*}))^3}(b (k s+r x_{*})+x_{*} (k (s-r)+2 r x_{*}))\bigg.\\
&& \ \ \ \ \ \ \ \ \times (a^2 (2 c+5 x_{*})-a (c^3+5 c^2 x_{*}+10 c x_{*}^2+10 x_{*}^3)+x_{*}^5)+\frac{b}{(b+x_{*})^3}(a (k s+r x_{*})+x_{*}\\
&& \ \ \ \ \ \ \ \ \bigg.\times (c k (s-r)+2 c r x_{*}+k x_{*} (s-2 r)+3 r x_{*}^2))\bigg),\\
&&\alpha_{31}=\frac{q (a^2-a (c^2+4 c x_{*}+6 x_{*}^2)+x_{*}^4)}{(a+x_{*} (c+x_{*}))^4};\\
&&\beta_{10}=\frac{s (a+x_{*} (c+x_{*}))^2 (b (k s+r x_{*})+x_{*} (k (s-r)+2 r x_{*}))}{k q x^{2}_{*} (-a+b (c+2 x_{*})+x_{*}^2)},\\
&&\beta_{20}=-\frac{s (a+x_{*} (c+x_{*}))^2 (b (k s+r x_{*})+x_{*} (k (s-r)+2 r x_{*}))}{k q x_{*}^3 (-a+b (c+2 x_{*})+x_{*}^2)},\
\beta_{11}=\frac{2 s}{x_{*}},\\
&&\beta_{02}=-\frac{k q s x_{*} (-a+b (c+2 x_{*})+x_{*}^2)}{(a+x_{*} (c+x_{*}))^2 (b (k s+r x_{*})+x_{*} (k (s-r)+2 r x_{*}))},\\
&&\beta_{30}=\frac{s (a+x_{*} (c+x_{*}))^2 (b (k s+r x_{*})+x_{*} (k (s-r)+2 r x_{*}))}{k q x_{*}^4 (-a+b (c+2 x_{*})+x_{*}^2)},\
\beta_{21}=-\frac{2 s}{x_{*}^2},\\
&&\beta_{12}=\frac{k q s (-a+b (c+2 x_{*})+x_{*}^2)}{(a+x_{*} (c+x_{*}))^2 (b (k s+r x_{*})+x_{*} (k (s-r)+2 r x_{*}))},\\
&&\beta_{40}=-\frac{s (a+x_{*} (c+x_{*}))^2 (b (k s+r x_{*})+x_{*} (k (s-r)+2 r x_{*}))}{k q x_{*}^5 (-a+b (c+2 x_{*})+x_{*}^2)},\
\beta_{31}=\frac{2 s}{x_{*}^3},\\
&&\beta_{22}=-\frac{k q s (-a+b (c+2 x_{*})+x_{*}^2)}{x_{*} (a+x_{*} (c+x_{*}))^2 (b (k s+r x_{*})+x_{*} (k (s-r)+2 r x_{*}))};\\
&&\delta_{20}=\frac{\alpha_{10} \alpha_{11}-\alpha_{01} \alpha_{20}}{\omega},\
\delta_{11}=-\alpha_{11},\
\delta_{30}=\frac{\alpha_{01}^2 \alpha_{30}-\alpha_{01} \alpha_{10} \alpha_{21}}{\omega}, \
\delta_{21}=\alpha_{01} \alpha_{21},\\
&&\delta_{40}=\frac{\alpha_{01}^2 \alpha_{10} \alpha_{31}-\alpha_{01}^3 \alpha_{40}}{\omega},\
\delta_{31}=-\alpha_{01}^2 \alpha_{31};
\end{eqnarray*}

\begin{eqnarray*}
&&\gamma_{20}=\frac{-\alpha_{01}^2\beta_{20}+\alpha_{10} \alpha_{01} (\beta_{11}-\alpha_{20})+\alpha_{10}^2 (\alpha_{11}-\beta_{02})}{\omega^2},\
\gamma_{11}=-\frac{\alpha_{10} (\alpha_{11}-2 \beta_{02})+\alpha_{01} \beta_{11}}{\omega},\\
&&\gamma_{02}=-\beta_{02},\
\gamma_{30}=\frac{\alpha_{01}{(\alpha_{01}^2 \beta_{30}+\alpha_{10} \alpha_{01} (\alpha_{30}-\beta_{21})+\alpha_{10}^2 (\beta_{12}-\alpha_{21}))}}{\omega^2},\\
&&\gamma_{21}=\frac{\alpha_{01} (\alpha_{10} (\alpha_{21}-2 \beta_{12})+\alpha_{01} \beta_{21})}{\omega},\
\gamma_{12}=\alpha_{01} \beta_{12},\\
&&\gamma_{40}=-\frac{\alpha_{01}^2 (\alpha_{01}^2 \beta_{40}+\alpha_{10} \alpha_{01} (\alpha_{40}-\beta_{31})+\alpha_{10}^2 (\beta_{22}-\alpha_{31}))}{\omega ^2},\\
&&\gamma_{31}=-\frac{\alpha_{01}^2 {(\alpha_{10} (\alpha_{31}-2 \beta_{22})+\alpha_{01} \beta_{31})}}{\omega},\ \gamma_{22}=-\alpha_{01}^2 \beta_{22};\\
&&{\eta_{11}=x_{*}^{3} \eta_{11}^{0} r^2+ksx_{*} \eta_{11}^{1}r+k^2 s^2 \eta_{11}^2, \text{with}}\\
&&\eta_{11}^{0}=a^4 (-4 b^2+b (4 k-9 x_{*})+x_{*} (k-4 x_{*}))+a^3 (-x_{*} (3 b^2 (8 b+7 c)+k^2 (16 b+c)-10 b k (4\\
&& \ \ \ \ \ \ \ \ \times b+3 c))-2 x^{2}_{*} (41 b^2+4 b (6 c-11 k)+3 k (k-2 c))-4 b (b-k) (b^2-k (b+c))-8 x_{*}^3\\
&& \ \ \ \ \ \ \ \ \times (15 b+3 c-5 k)-56 x_{*}^4)+a^2 (2 x_{*}^4 (47 b^2+b (16 k-60 c)-7 (2 c^2-6 c k+k^2))+4 b^2 c^2\\
&& \ \ \ \ \ \ \ \ \times (b-k)^2+b c x_{*} (b-k) (9 b^2+24 b c-9 b k+5 c k)+x^{3}_{*} (72 b^3+20 b^2 (c-3 k)+b (-33 c^2\\
&& \ \ \ \ \ \ \ \ +112 c k-12 k^2)+c k (17 c-16 k))+2 x_{*}^2 (6 b^4+3 b^3 (9 c-4 k)+2 b^2 (c-k) (4 c-3 k)+\\
&& \ \ \ \ \ \ \ \ 13 b c k (c-k)-c^2 k^2)-2 x_{*}^5 (3 b+50 c-27 k)-48 x_{*}^6)+x_{*}^3 (-b c^3 (b-k) (b^2-k (b+c))\\
&& \ \ \ \ \ \ \ \ -2 x_{*}^5 (69 b^2+6 b (4 c-5 k)-10 c^2+6 c k+k^2)-2 b c^2 x_{*} (b-k) (3 b^2+3 b (c-k)+c (c\\
&& \ \ \ \ \ \ \ \ -2 k))+x_{*}^4 (-72 b^3+b^2 (84 k-162 c)+3 b (5 c^2+12 c k-4 k^2)+c (4 c^2-15 c k+2 k^2))\\
&& \ \ \ \ \ \ \ \ -b c x_{*}^2 (15 b^3+6 b^2 (6 c-5 k)+b (16 c^2-41 c k+15 k^2)+c k (5 k-4 c))-2 x_{*}^3 (k (-12 b^3-\\
&& \ \ \ \ \ \ \ \ 50 b^2 c+b c^2+c^3)+b (6 b^3+45 b^2 c+32 b c^2-3 c^3)+k^2 (6 b-c) (b+c))+3 x_{*}^6 (-27 b+4\\
&& \ \ \ \ \ \ \ \ \times c+3 k)-12 x_{*}^7)+a x_{*} (6 b^2 c^3 (b-k)^2+2 x_{*}^5 (225 b^2+276 b c-156 b k+12 c^2-50 c k+\\
&& \ \ \ \ \ \ \ \ 11 k^2)+x_{*}^4 (216 b^3+b^2 (679 c-272 k)+b (210 c^2-394 c k+56 k^2)+c k (19 k-18 c))+2\\
&& \ \ \ \ \ \ \ \ \times x_{*}^3 (18 b^4+6 b^3 (29 c-6 k)+2 b^2 (79 c^2-103 c k+9 k^2)+b c (c-4 k) (15 c-8 k)-c^2 k (c\\
&& \ \ \ \ \ \ \ \ -2 k))+c x_{*}^2 (58 b^4+4 b^3 (45 c-29 k)+b^2 (57 c^2-198 c k+58 k^2)+18 b c k (k-c)+c^2 k^2)\\
&& \ \ \ \ \ \ \ \ +2 b c^2 x_{*} (b-k) (3 b (5 b+6 c)-k (15 b+c))+8 x_{*}^6 (51 b+16 c-13 k)+120 x_{*}^7),\\
&&\eta_{11}^{1}=a^3 (b^3 c (k-b)+x_{*}^3 (-108 b^2-53 b c+54 b k+5 c k)+b x_{*}^2 (-43 b^2-20 b c+31 b k+16 c k)\\
&& \ \ \ \ \ \ \ \ +b^2 x_{*} (-10 b^2+b (c+10 k)+c k)+x_{*}^4 (-139 b-24 c+23 k)-58 x_{*}^5)+x_{*}^5 (b c x_{*}^2 (-21 b^2\\
&& \ \ \ \ \ \ \ \ +5 k (b+c)-10 b c-4 c^2)-b c^2 x_{*} (6 b^2+5 b c+2 b k+2 c^2-7 c k)-x_{*}^4 (30 b^2+3 b (7 c-2\\
&& \ \ \ \ \ \ \ \ \times k) +c (k-11 c))+x_{*}^3 (-12 b^3+b^2 (4 k-45 c)+b c (5 c+9 k)+c^2 (2 c-3 k))+b c^2 (b^3-\\
&& \ \ \ \ \ \ \ \ b^2 (2 c+k)-b c^2+2 c^2 k)+x_{*}^5 (-21 b+4 c+k)-3 x_{*}^6)+a^2 x_{*} (3 b^3 c^2 (b-k)+x_{*}^4 (198 b^2\\
&& \ \ \ \ \ \ \ \ -39 b c-36 b k-15 c^2+17 c k)+b^2 c x_{*} (2 b^2+29 b c-2 b k-13 c k)+x_{*}^3 (99 b^3+b^2 (71 c-\\
&& \ \ \ \ \ \ \ \ 51 k)+b c (29 k-16 c)+2 c^2 k)+b x_{*}^2 (12 b^3+b^2 (53 c-12 k)+17 b c (2 c-k)+7 c^2 k)+x_{*}^5\\
&& \ \ \ \ \ \ \ \ \times (129 b-44 c-9 k)+24 x_{*}^6) +a x_{*}^2 (3 b^3 c^3 (b-k)+x_{*}^5 (128 b^2+273 b c-34 b k+20 c^2-\\
&& \ \ \ \ \ \ \ \ 29 c k)+2 b^2 c^2 x_{*} (5 b^2+12 b c-5 b k-6 c k)+x_{*}^4 (51 b^3+b^2 (334 c-31 k)+b c (103 c-74 k)\\
&& \ \ \ \ \ \ \ \ +c^2 (2 c-7 k))+b c x_{*}^2 (19 b^3+b^2 (89 c-19 k)+5 b c (8 c-9 k)-2 c^2 k)+x_{*}^3 (6 b^4+b^3 (151 c\\
&& \ \ \ \ \ \ \ \ -6 k)+b^2 c (174 c-85 k)+2 b c^2 (8 c-9 k)-2 c^3 k)+x_{*}^6 (123 b+80 c-15 k)+42 x_{*}^7)+a^4\\
&& \ \ \ \ \ \ \ \ \times (b^3-b^2 (k+4 x_{*})+2 b x_{*} (k-6 x_{*})-5 x_{*}^3),
\end{eqnarray*}

\begin{eqnarray*}
&&\eta_{11}^{2}=a^4 (b^3-3 b x_{*}^2-x_{*}^3)+{x_{*}^6 (b-c)^2 (-3 b c x_{*}-b c (b+c)+x_{*}^3)}+a^2 x_{*}^2 (-3 b^3 c (b-2 c)\\
&& \ \ \ \ \ \ \ \ +3 b x_{*}^2 (9 b^2+2 b c-c^2)+b^2 x_{*} (4 b^2+b c+9 c^2)+x_{*}^4 (31 b-2 c)+3 b x_{*}^3 (17 b-4 c)+9 x_{*}^5)\\
&& \ \ \ \ \ \ \ \ -a^3 (b^4 c+6 b^4 x_{*}+3 b^2 x_{*}^2 (7 b+c)+3 x_{*}^4 (11 b+c)+b x_{*}^3 (35 b+11 c)+9 x_{*}^5)+a x_{*}^3 (b^3 c^2\\
&& \ \ \ \ \ \ \ \ \times (3 c-2 b)+3 x_{*}^4 (-3 b^2+7 b c+c^2)+3 b^2 x_{*}^2 (-2 b^2+2 b c+5 c^2)-3 b^2 c x_{*} (b-2 c) (b+c)\\
&& \ \ \ \ \ \ \ \ +x_{*}^3 (-15 b^3+33 b^2 c+b c^2+c^3)-3 x_{*}^5 (b-3 c)+x_{*}^6).
\end{eqnarray*}

\end{appendices}

\end{document}


\maketitle

\section{A detailed example}

Here we include some equations and theorem-like environments to show
how these are labeled in a supplement and can be referenced from the
main text.
Consider the following equation:
\begin{equation}
  \label{eq:suppa}
  a^2 + b^2 = c^2.
\end{equation}
You can also reference equations such as \cref{eq:matrices,eq:bb} 
from the main article in this supplement.

\lipsum[100-101]

\begin{theorem}
An example theorem.
\end{theorem}

\lipsum[102]
 
\begin{lemma}
An example lemma.
\end{lemma}

\lipsum[103-105]

Here is an example citation: \cite{KoMa14}.

\section[Proof of Thm]{Proof of \cref{thm:bigthm}}
\label{sec:proof}

\lipsum[106-112]

\section{Additional experimental results}
\Cref{tab:foo} shows additional
supporting evidence. 

\begin{table}[htbp]
\footnotesize
  \caption{Example table.}  \label{tab:smfoo}
\begin{center}
  \begin{tabular}{|c|c|c|} \hline
   Species & \bf Mean & \bf Std.~Dev. \\ \hline
    1 & 3.4 & 1.2 \\
    2 & 5.4 & 0.6 \\ \hline
  \end{tabular}
\end{center}
\end{table}

\bibliographystyle{siamplain}
\bibliography{references}